\theoremstyle{plain}
\newtheorem{theorem}{Theorem}[section]
\newtheorem{proposition}[theorem]{Proposition}
\newtheorem{lemma}[theorem]{Lemma}
\newtheorem{remark}[theorem]{Remark}
\newtheorem{definition}[theorem]{Definition}
\newtheorem{main theorem}[theorem]{Main Theorem}
\newcommand{\interior}{\operatorname{int}}
\newcommand{\closure}{\operatorname{cl}}
\newcommand{\Sym}{\operatorname{Sym}}
\newcommand{\NN}{\mathbb{N}}
\newcommand{\ZZ}{\mathbb{Z}}
\newcommand{\QQ}{\mathbb{Q}}
\newcommand{\RR}{\mathbb{R}}
\newcommand{\HH}{\mathbb{H}}
\newcommand{\QQQ}{\hat{\mathbb{Q}}}
\newcommand{\Conway}{\mbox{\boldmath$S$}^{2}}
\newcommand{\Conways}
{(\mbox{\boldmath$S$}^{2},\mbox{\boldmath$P$}^0)}
\newcommand{\PP}{\mbox{{\boldmath$P$}}^0}
\newcommand{\PConway}{\check{\mbox{\boldmath$S$}}^2}
\newcommand{\Ball}{\mbox{\boldmath$B$}^3}
\newcommand{\rtangle}[1]{({\mathbf B}^3,t({#1}))}
\newcommand{\SL}{\operatorname{SL}}
\newcommand{\PGL}{\operatorname{PGL}}
\newcommand{\GL}{\operatorname{GL}}
\newcommand{\Isom}{\operatorname{Isom}}
\newcommand{\Fix}{\operatorname{Fix}}
\newcommand{\Aut}{\operatorname{Aut}}
\newcommand{\Out}{\operatorname{Out}}
\newcommand{\DD}{\mathcal{D}}
\newcommand{\Farey}{\mathcal{F}}
\newcommand{\RGPP}[1]{\hat\Gamma_{#1}}
\newcommand{\RGP}[1]{\Gamma_{#1}}
\newcommand{\OO}{{\mathcal{O}}}
\newcommand{\Piorb}{O}
\newcommand{\orbo}{\mathcal{O}}
\newcommand{\orbm}{\mathcal{M}}
\newcommand{\RotG}{{\mathcal{J}}}
\newcommand{\svert}{\,|\,}
\def\threeball{{{\mathbf B}^3}}
\renewcommand\subsection{\@startsection{subsection}{2}{0mm}
    {-10.5dd plus-8pt minus-4pt}{10.5dd}
     {\normalsize\upshape}}
\begin{document}
\title[Parabolic generating pairs]
{Classification of parabolic generating pairs of Kleinian groups
with two parabolic generators}

\dedicatory{Dedicated to Professor Bruno Zimmermann on the occasion of his 70-th birthday}

\author{Shunsuke Aimi}
\address{Yonago-Nishi High School\\
Ohtani-cho 200, Yonago, Tottori, 683-8512, Japan}
\email{1991.aimi@gmail.com}

\author{Donghi Lee}
\address{Department of Mathematics\\
Pusan National University \\
San-30 Jangjeon-Dong, Geumjung-Gu, Pusan, 609-735, Korea}
\email{donghi@pusan.ac.kr}

\author{Shunsuke Sakai}
\address{Department of Mathematics\\
Hiroshima University\\
Higashi-Hiroshima, 739-8526, Japan}
\email{d180102@hiroshima-u.ac.jp}

\author{Makoto Sakuma}
\curraddr{Advanced Mathematical Institute\\
Osaka City University\\
3-3-138, Sugimoto, Sumiyoshi, Osaka City
558-8585, Japan}
\address{Department of Mathematics\\
Hiroshima University\\
Higashi-Hiroshima, 739-8526, Japan}
\email{sakuma@hiroshima-u.ac.jp}

\keywords{two bridge link; Heckoid group; parabolic transformation.}
\subjclass[2010]{Primary 57M50, Secondary 57M25}


\begin{abstract}
We give an alternative proof to Agol's classification
of parabolic generating pairs of 
non-free Kleinian groups generated by two parabolic transformations.
As an application, we give a complete characterisation 
of epimorphims between $2$-bridge knot groups
and a complete characterisation of degree one maps 
between the exteriors of hyperbolic $2$-bridge links.
\end{abstract}

\maketitle

\section{Introduction}
\label{sec:intro}

In \cite[Theorem 4.3]{Adams1},
Adams proved that a torsion free Kleinian group of cofinite volume
is generated by two parabolic transformations
if and only if the quotient hyperbolic manifold is
homeomorphic to the complement of a $2$-bridge link
which is not a torus link.
This refines the result of Boileau and Zimmermann \cite[Corollary~3.3]{Boileau-Zimmermann}
that a link in $S^3$ is a $2$-bridge link if and only if its link group is generated by two meridians.

In 2002, Agol \cite{Agol} announced the following classification theorem
of non-free Kleinian groups generated by two parabolic transformations,
which generalises Adams' result.

\begin{theorem}
\label{main-theorem}
A non-free Kleinian group $\Gamma$ is generated by two non-commuting  
parabolic elements if and only if one of the following holds.
\begin{enumerate}[\rm (1)]
\item
$\Gamma$ is conjugate to the hyperbolic $2$-bridge link group, $G(r)\cong \pi_1(S^3-K(r))$,
for some rational number $r=q/p$,
where $p$ and $q$ are relatively prime integers such that 
$q\not\equiv \pm 1 \pmod{p}$.
\item
$\Gamma$ is conjugate to the Heckoid group, $G(r;n)$,
for some $r\in\QQ$ and some $n\in \frac{1}{2}\NN_{\ge 3}$.
\end{enumerate}
\end{theorem}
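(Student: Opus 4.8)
For a rational $r=q/p$ with $q\not\equiv\pm1\pmod p$ the link $K(r)$ is a non-torus $2$-bridge link, so by Adams' theorem its group $G(r)$ is a torsion-free Kleinian group of cofinite volume generated by two meridians, both of which are parabolic; and $G(r)$ is non-free because $S^3-K(r)$ is a compact-cored aspherical $3$-manifold that is not a handlebody (e.g.\ its boundary is not a genus-$g$ surface, or directly: a non-trivial knot/link group is never free). For $r\in\QQ$ and $n\in\frac12\NN_{\ge 3}$ the Heckoid orbifold is hyperbolic, its orbifold fundamental group $G(r;n)$ is Kleinian, generated by two parabolic elements coming from the bridge meridians through its standard parabolic presentation, and it is non-free because it contains either a torsion element (half-integer $n$, from an order-$2$ locus) or an explicit cone-axis relation of order $n$. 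This settles ``(1) or (2) $\Rightarrow$ two non-commuting parabolic generators, non-free''.

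\textbf{The hard direction: setup and reductions.}
Let $\Gamma=\langle A,B\rangle\le\PSL(2,\CC)$ be discrete, non-free, with $A,B$ non-commuting parabolics. First I would observe that $\Gamma$ is non-elementary, since two non-commuting parabolics in a discrete group cannot lie in a virtually abelian or $\partial$-pair-fixing subgroup. After conjugation one normalises to the Riley form $A=\bigl(\begin{smallmatrix}1&1\\0&1\end{smallmatrix}\bigr)$, $B=\bigl(\begin{smallmatrix}1&0\\-\rho&1\end{smallmatrix}\bigr)$ with a unique parameter $\rho\in\CC^\times$, so the problem becomes: identify the locus of $\rho$ for which $\Gamma=\Gamma_\rho$ is discrete and non-free. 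Next, invoking tameness (Agol; Calegari--Gabai) together with the structure theory of finitely generated Kleinian groups, I would reduce to the geometrically finite case, ruling out geometrically infinite $\Gamma$ here because a degenerate end of a $2$-generated group is carried by a once-punctured torus or four-punctured sphere, and the ensuing surface-by-cyclic description is incompatible with two parabolic generators unless $\Gamma$ is free.

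\textbf{The crux: the $2$-bridge sphere and Farey combinatorics.}
The heart of the argument is to manufacture, intrinsically from the parabolic pair $(A,B)$ and from non-freeness, an \emph{essential four-punctured sphere} $\Sigma$ (a ``$2$-bridge sphere'') in $N=\HH^3/\Gamma$: a $\Gamma$-equivariant system of planes descending to a properly embedded essential $\Sigma\subset N$ that cuts $N$ into two pieces each a quotient of a trivial-tangle complement, so that $\Gamma$ is exhibited as the quotient of $\ZZ*\ZZ=\langle a,b\rangle$ (with $a\mapsto A$, $b\mapsto B$) by the normal closure of loops carried on $\Sigma$. The essential simple loops on $\Sigma$ are parametrised by $\QQQ$ via the Farey tessellation $\Farey$, and the mapping class group of $\Sigma$ acts as $\PSL(2,\ZZ)$. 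One then imports the Lee--Sakuma analysis of homotopically trivial and of torsion simple loops on $2$-bridge spheres: discreteness of $\Gamma$ forces the set of slopes killed or made torsion in $\Gamma$ to be the normal closure of a single slope-$r$ loop (whence $\Gamma\cong G(r)$) or of a slope-$r$ loop together with an order-$n$ torsion relation (whence $\Gamma\cong G(r;n)$, the half-integer $n$ arising from an order-$2$ identification); any further relation is excluded by a Jørgensen-inequality / limit-set argument forcing $\Gamma$ to degenerate or become elementary. Finally $q\not\equiv\pm1\pmod p$ reappears as the Adams hyperbolicity condition for $K(r)$, and $n\in\frac12\NN_{\ge 3}$ as the hyperbolicity range of the Heckoid orbifold.

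\textbf{Main obstacle.}
The serious difficulty is the construction of the essential $2$-bridge sphere $\Sigma$ in Step~3 \emph{without} assuming in advance that $\Gamma$ is a $2$-bridge or Heckoid group: one must control the topology of the cusped geometrically finite orbifold $N$ using only that $\pi_1(N)$ is generated by two parabolics and is non-free. I expect this to require a careful study of a Ford (isometric) fundamental domain for $\Gamma_\rho$ --- organising its faces and edge-cycles along the Farey combinatorics --- or alternatively a minimal/normal-surface argument producing an immersed four-punctured sphere that is then promoted to an embedded splitting. Everything downstream (the Farey bookkeeping, the list of admissible relators, the matching with known presentations of $G(r)$ and $G(r;n)$) is then essentially the combinatorial machinery already developed in the authors' earlier work on simple loops on $2$-bridge spheres.
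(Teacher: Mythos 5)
There is a genuine gap, and you have in effect flagged it yourself: the construction of the essential four-punctured sphere $\Sigma$ in your third step is not a technical obstacle to be smoothed over later — it \emph{is} the theorem. Producing, from nothing more than ``$\Gamma$ is discrete, non-free, and generated by two non-commuting parabolics,'' a $2$-bridge splitting of $\HH^3/\Gamma$ is exactly what would identify $\Gamma$ as $G(r)$ or $G(r;n)$, and your proposal does not carry this out; it only names two speculative strategies (Ford domains organised along Farey combinatorics, or a normal/minimal surface argument) without executing either. Everything downstream — the parametrisation of slopes by $\hat{\QQ}$, the Lee--Sakuma analysis of trivial and torsion simple loops — presupposes that $\Gamma$ has already been presented as a quotient of $\ZZ*\ZZ$ by relations carried on a $2$-bridge sphere, i.e.\ that the conclusion is already in hand. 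The reduction to the geometrically finite case is likewise asserted rather than proved: the claim that a degenerate end of a two-parabolic-generator group yields a ``surface-by-cyclic description incompatible with two parabolic generators unless $\Gamma$ is free'' is not an argument, and disposing of geometrically infinite groups is one of the genuinely delicate points here (it is where tameness and the covering theorem must be used with care, not just invoked). The easy direction is essentially fine.

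For calibration: this paper does not prove Theorem \ref{main-theorem} at all. It is Agol's announced classification, and the proof is given in the companion paper \cite{AOPSY}, which the present paper only cites. The argument there proceeds through the parameter space of two-parabolic-generator groups (the Riley slice of Schottky space and the structure of its complement, where the Heckoid groups and $2$-bridge link groups live), together with substantial input on the geometry of these groups — not by intrinsically manufacturing a $2$-bridge sphere inside $\HH^3/\Gamma$ from the abstract hypotheses. So your outline is not only incomplete at its central step; the route you sketch is also not the one the authors (or Agol) actually take, and there is no evidence in the literature that the ``intrinsic $\Sigma$'' route can be pushed through directly.
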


Adams also proved that each hyperbolic $2$-bridge link groups has only finitely many
distinct parabolic generating pairs up to equivalence
\cite[Corollary 4.1]{Adams1}
and moreover that the figure-eight knot group
has precisely two such pairs up to equivalence
\cite[Corollary 4.6]{Adams1}.
Here, a {\it parabolic generating pair} of a non-elementary Kleinian group
$\Gamma$ is an unordered pair 
of two parabolic transformations 
that generate $\Gamma$.
Two parabolic generating pairs $\{\alpha,\beta\}$
and $\{\alpha',\beta'\}$ of $\Gamma$ are said to be {\it equivalent}
if $\{\alpha',\beta'\}$ is equal to $\{\alpha^{\epsilon_1},\beta^{\epsilon_2} \}$
for some $\epsilon_1, \epsilon_2 \in \{\pm1\}$
up to simultaneous conjugation.

Agol~\cite{Agol} also announced the following theorem,
which generalises the above results of Adams.
(See Figure \ref{generating-pairs} for intuitive description 
and Section \ref{sec:description-theorem} for precise description.)

\begin{theorem}
\label{main-theorem2}
(1) Every hyperbolic $2$-bridge link group $G(r)$
has precisely two parabolic generating pairs up to equivalence.

(2) Every Heckoid group $G(r;n)$ has a unique parabolic generating pair up to equivalence.
\end{theorem}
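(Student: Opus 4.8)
The plan is to reduce both statements to a statement about parabolic generating pairs of the two-orbifold/three-orbifold pairs $(S^3,K(r))$ and its Heckoid analogues, and then to analyse these pairs via the combinatorics of the Farey tessellation $\Farey$ together with the topology of the associated two-bridge sphere $\Conways$. The starting point is Theorem \ref{main-theorem}, which tells us that the only non-free Kleinian groups generated by two parabolics are the groups $G(r)$ and $G(r;n)$; hence a parabolic generating pair of $G(r)$ (resp. $G(r;n)$) must be, up to equivalence, one of the generating pairs that present $\Gamma$ as such a group. So the task is purely enumerative: list all ways of writing $G(r)$ (resp. $G(r;n)$) on two parabolic generators, and show that up to the equivalence relation (inversion of each generator, simultaneous conjugation) there are exactly two (resp. exactly one).

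First I would recall the normal form: $G(r)$ and $G(r;n)$ are obtained as quotients of the free group $F(a,b)$ on two parabolic meridians, and every parabolic generating pair $\{\alpha,\beta\}$ determines an epimorphism $F(a,b)\to\Gamma$ sending $a\mapsto\alpha$, $b\mapsto\beta$; since $\alpha,\beta$ are parabolic and the quotient manifold/orbifold has a single cusp (or the prescribed cusps), $\alpha$ and $\beta$ must each be conjugate in $\Gamma$ to a meridian, i.e. to $a^{\pm1}$ or $b^{\pm1}$ in the standard presentation. Geometrically, a parabolic generating pair corresponds to an unordered pair of "meridian" slopes on the two-bridge sphere, and two such pairs give equivalent generating pairs precisely when they are related by the symmetry group of the pair $(\text{orbifold},\text{cusp structure})$. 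Thus step two is to compute this symmetry group: for the two-bridge link orbifold $\OO(r)$ this is the well-known dihedral-type symmetry group (the hyperelliptic involutions and the "vertical/horizontal" symmetry of the two-bridge sphere), which acts on the set of meridian pairs with exactly two orbits, corresponding to the two bridge spheres of $K(r)$ (equivalently, the pair $\{a,b\}$ and its image under the nontrivial symmetry); for the Heckoid orbifold $\OO(r;n)$ the extra cone points or the reflector structure rigidify the situation so that only one orbit survives.

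The key technical input I would use is the rigidity/uniqueness statement for the canonical decomposition: by Mostow–Prasad rigidity the symmetry group of $\Gamma$ equals the isometry group of $\HH^3/\Gamma$, and the isometry group permutes the parabolic generating pairs; combined with the explicit computation of $\Isom(\HH^3/G(r))$ and $\Isom(\HH^3/G(r;n))$ — which are finite and can be read off from the two-bridge/Heckoid structure — this bounds the number of equivalence classes from above. The matching lower bound (that the two candidate pairs for $G(r)$ are genuinely inequivalent, and that the Heckoid pair is unique and not equivalent to a "fake" one) comes from the distinctness of the two bridge spheres of a two-bridge link and from the fact that a Heckoid group has a distinguished generating pair visible in its definition. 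The main obstacle I anticipate is the careful bookkeeping of the equivalence relation — keeping track of orientation issues, the $\epsilon_i\in\{\pm1\}$ ambiguity, and the interaction between "unordered pair" and the symmetries — together with handling the finitely many exceptional small cases (e.g. $r$ with extra symmetry, the figure-eight knot) where the generic count could in principle fail; I would treat these by direct inspection using the Farey-tessellation description of $G(r;n)$ developed in the cited companion work, and invoke Agol's classification (Theorem \ref{main-theorem}) to rule out any additional, unexpected parabolic generating pairs arising from non-standard presentations.
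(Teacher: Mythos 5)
Your proposal has two genuine gaps, and they are exactly the two places where the paper has to do real work. First, you have no mechanism for producing a \emph{finite list} of candidate parabolic generating pairs. Knowing (via Adams, or via Theorem \ref{main-theorem}) that each of $\alpha$ and $\beta$ is individually conjugate to a standard meridian does not determine the equivalence class of the unordered \emph{pair}: that class is governed by the relative position of the two parabolic fixed points, i.e.\ by a connecting arc (tunnel), not by a ``pair of meridian slopes on the two-bridge sphere.'' This is precisely why the candidates include not just the upper and lower meridian pairs but also the long upper/lower, intermediate, and extra meridian pairs. The paper gets finiteness from Boileau's observation: the $\pi$-rotation $h$ about the geodesic joining the two parabolic fixed points satisfies $(h\alpha h^{-1},h\beta h^{-1})=(\alpha^{-1},\beta^{-1})$, hence normalises $\Gamma$ and descends to a strong inversion of $K(r)$ (or an involution of the Heckoid orbifold), so every candidate pair is represented by an arc of the fixed-point set of one of the finitely many strong inversions, which are classified using $\Isom^+(S^3-K(r))$. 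Your sketch never introduces this inverting involution, and without it the enumeration does not get off the ground.

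Second, even granting the finite list, your claim that the symmetry group ``acts on the set of meridian pairs with exactly two orbits'' assumes the conclusion: the long, intermediate, and extra meridian pairs are \emph{not} related to the upper/lower pairs by any symmetry, and the actual content of the theorem is that they fail to \emph{generate}. Theorem \ref{main-theorem} cannot rule them out --- it classifies which groups arise, not which pairs generate a given group. (The statement that would do this job is Theorem \ref{Agol-original-theorem}, which is far stronger and is exactly what this paper is providing an alternative to.) The paper's mechanism is homological: if $\{m_1,m_2\}$ generates $G(r)$ then $m_1m_2$ must generate the index-$2$ subgroup $H_1(M(K(r)))\cong\ZZ_p$ of the $\pi$-orbifold group $D_p$ (Lemma \ref{lem:invariant-omega}), and explicit computations in the double branched covering show this element is $0$, of order dividing $2$, or a proper multiple $p_i[\tilde\alpha_0]$ with $1<p_i<p$ for the long, intermediate, and extra pairs respectively (Propositions \ref{prop:long-meridian-pair} and \ref{prop:extra-meridian-pair}). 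Finally, the inequivalence of the upper and lower pairs also needs a proof --- the paper cites the fact that $\tau_+$ and $\tau_-$ are not properly homotopic --- rather than an appeal to ``distinctness of the two bridge spheres.''
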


\begin{figure}
	\centering
	\includegraphics[width=11cm]{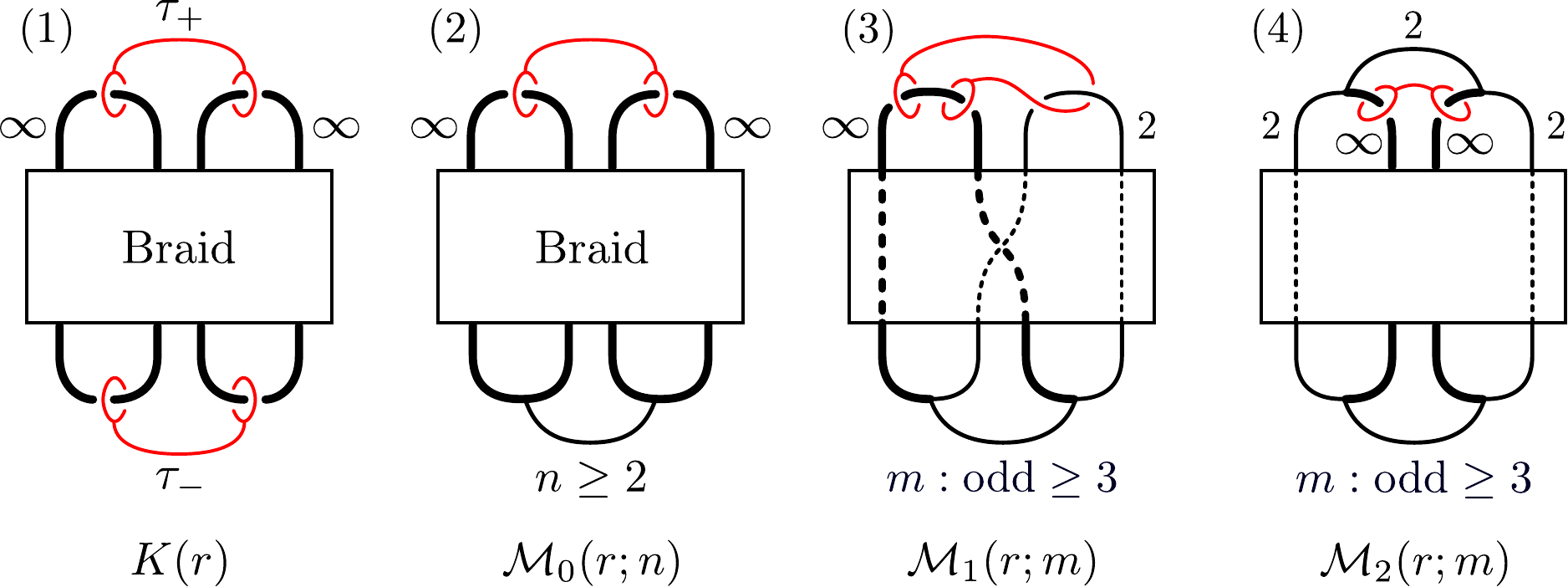}
\caption{
The black graphs illustrate weighted graphs representing $2$-bridge links and Heckoid orbifolds,
where the thick edges with weight $\infty$ correspond to parabolic loci
and thin edges with integral weights represent the singular set.
The red thin graphs represent the parabolic generating pairs of the hyperbolic $2$-bridge link groups and the Heckoid groups.}
	\label{generating-pairs}
\end{figure}

In the last author's joint paper \cite{AOPSY}
with Hirotaka Akiyoshi, Ken'ichi Ohshika, John Parker, and Han Yoshida,
a full proof to Theorem \ref{main-theorem} was given.
The main purpose of this companion paper of \cite{AOPSY}
is to give an alternative \lq homological' proof 
to Theorem \ref{main-theorem2}. 
As an application,
we give a complete characterisation 
of epimorphims between $2$-bridge knot groups
(Theorem \ref{thm:cor-epimorohism} and Remark \ref{rem:determination-epi})
and a complete characterisation of degree one maps 
between the exteriors of hyperbolic $2$-bridge links (Theorem \ref{thm:degree-one}).
Since it is proved by Boileau, Boyer, Reid, and Wang \cite[Corollary~1.3]{BBRW}
that a knot group which is a \lq\lq homomorph'' of a $2$-bridge knot group
is itself a $2$-bridge knot group,
Theorem \ref{thm:cor-epimorohism} 
gives an alternative \lq\lq precise'' proof 
for $2$-bridge knot groups to Simon's conjecture
that a knot group admits epimorphisms onto only finitely many knot groups,
which was established by Agol and Liu \cite{Agol-Liu}
in full generality.
Theorem \ref{thm:cor-epimorohism} has been already used in 
\cite{HMJS, Suzuki, Suzuki-Tran}.
We apologise the delay in writing up the result.

In the preceding paper \cite{Lee-Sakuma_2016} by the second and last authors, 
a proof to Theorem \ref{main-theorem2}(1) 
for the genus-one $2$-bridge knot groups was given
by using the small cancellation theory.
In the first author's master thesis \cite{Aimi} supervised by the last author,
a proof to the result for all hyperbolic $2$-bridge knots was given
by using the Alexander invariants.
In this paper, we give a simple proof for all hyperbolic 
$2$-bridge link groups and for all Heckoid groups, 
by using the homology of the double branched coverings.

It should be noted that 
the parabolicity of the generating pairs is essential
in Theorem \ref{main-theorem2}.
In fact, Heusener and Porti~\cite{Heusener-Porti} proved that
every hyperbolic knot admits infinitely many generating pairs
up to Nielsen equivalence.
Moreover, the same conclusion for torus knots,
which include the non-hyperbolic $2$-bridge knots,
had been proved by Zieschang~\cite{Zieschang}.

\medskip

Our proof of Theorem \ref{main-theorem2} is based on Boileau's suggestion~\cite{Boileau}
to use the well-known fact that, for a parabolic generating pair $\{\alpha, \beta\}$
of $\Gamma$, there is an order $2$ elliptic element $h$ in the normaliser $N(\Gamma)$
of $\Gamma$ in $\Isom^+(\HH^3)$ such that
$(h \alpha h^{-1}, h \beta h^{-1})=(\alpha^{-1}, \beta^{-1})$
(see Propositions \ref{prop:strong-involution_1} and \ref{prop:strong-involution_2}).
Since we can determine 
the isometry groups of the hyperbolic $2$-bridge link complements and the Heckoid orbifolds
(Propositions \ref{prop:isometry-group} and \ref{prop:isometry-group-Heckoid}), 
Boileau's suggestion leads us to a finite list of possible parabolic generating pairs
(see Propositions \ref{prop:strong-invertion_1}, \ref{prop:strong-invertion_2},
and Section \ref{sec:Heckoid-orb}).
For the hyperbolic $2$-bridge link groups,
we can exclude all fake parabolic generating pairs through 
simple calculations on the homology of the double branched coverings
(Sections \ref{sec:long-meridian-pair} and \ref{sec:extra-meridian-pair}).
For the Heckoid groups,
we can also do so through a simple argument by using the orbifold theorem
and by using natural epimorphisms from Heckoid groups onto 
the $\pi$-orbifold groups of $2$-bridge links
(Section \ref{sec:Heckoid-orb}).
This is the strategy of our proof of the main Theorem \ref{main-theorem2}.

\medskip

At the end of the introduction, 
we note that Agol~\cite{Agol} obtained Theorem \ref{main-theorem2}(1)
as a corollary of the following much stronger theorem.

\begin{theorem}
\label{Agol-original-theorem}
For any hyperbolic $2$-bridge link group $G(r)$
and for any meridian pair in $G(r)$
which is not equivalent to the upper nor lower
meridian pair,
the subgroup of $G(r)$ generated by the meridian pair
is a free group.
\end{theorem}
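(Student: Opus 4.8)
The plan is to deduce the statement from Theorem~\ref{main-theorem} and Theorem~\ref{main-theorem2}(1) by a covering-space argument, the point being that a \emph{non-free} subgroup of $G(r)$ generated by a meridian pair must be all of $G(r)$. Fix a meridian pair $\{\alpha,\beta\}$ of $G(r)=\pi_1(M)$, where $M=S^3-K(r)$, and put $H:=\langle\alpha,\beta\rangle$; if $H$ is free there is nothing to prove, so assume it is not. First I would record the fact that, for a peripheral $\ZZ^2$-subgroup $P\le G(r)$, any element of $P$ that is conjugate in $G(r)$ to a meridian of $K(r)$ in fact generates the meridian slope of $P$; this follows from the rigidity of peripheral subgroups of hyperbolic link groups — parabolics have unique fixed points, so distinct cusps are not identified and two elements of $P$ that are $G(r)$-conjugate coincide. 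This rules out $H\cong\ZZ^2$, the only non-free, elementary, torsion-free option, since then $\alpha$ and $\beta$ would both generate the meridian slope of the peripheral subgroup containing them and $H$ would be infinite cyclic. Hence $H$ is non-elementary, generated by the non-commuting parabolics $\alpha,\beta$, and Theorem~\ref{main-theorem} shows $H$ is conjugate in $\Isom^+(\HH^3)$ to a hyperbolic $2$-bridge link group $G(r')$ or to a Heckoid group $G(r';n)$. The Heckoid case is impossible, since $H\le G(r)$ is torsion-free whereas Heckoid groups have torsion. So $H$ is conjugate to some $G(r')$, hence is a lattice, hence has finite index $d:=[G(r):H]$ in $G(r)$; let $p\colon\tilde M\to M$ be the corresponding $d$-fold covering, with $\tilde M\cong S^3-K(r')$.

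The heart of the argument is to show that every meridian slope of $M$ lifts through $p$. Apply Theorem~\ref{main-theorem2}(1) to $H\cong G(r')$: the parabolic generating pair $\{\alpha,\beta\}$ of $H$ is equivalent to a meridian pair of $K(r')$, so $\alpha$ and $\beta$ are meridians of $K(r')$. Since $\alpha$ and $\beta$ generate $H=\pi_1(\tilde M)$, their images generate $H_1(\tilde M)\cong\ZZ^{c'}$, which is freely generated by the meridians of the $c'$ components of $K(r')$; this forces $c'\le 2$ and shows that the meridian of $\tilde M$ at every cusp is an $H$-conjugate of $\alpha^{\pm1}$ or of $\beta^{\pm1}$. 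Such an element generates the meridian slope of $\tilde M$ at that cusp; being conjugate in $G(r)$ to a meridian of $M$ and lying in the peripheral $\ZZ^2$-subgroup of $G(r)$ at the cusp of $M$ beneath, it also generates the meridian slope of $M$ there, by the fact recorded above. Hence that meridian slope of $M$ lies in the peripheral subgroup of $H$ at the cusp of $\tilde M$ in question, i.e.\ it lifts.

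Finally, filling every cusp of $M$ along its meridian slope recovers $S^3$; by the previous paragraph this Dehn filling pulls back through $p$ to a Dehn filling of $\tilde M$ along the lifted slopes, producing a closed connected $3$-manifold $N$ together with a $d$-fold covering $N\to S^3$. Since $\pi_1(S^3)=1$, this forces $d=1$, so $H=G(r)$; then $\{\alpha,\beta\}$ is a parabolic generating pair of $G(r)$, and Theorem~\ref{main-theorem2}(1) says it must be equivalent to the upper or the lower meridian pair, contradicting the hypothesis. Therefore $H$ is free, as claimed.

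I expect the main obstacle to be the lifting property in the middle paragraph. In an arbitrary finite cover of a link exterior a meridian slope need not lift — it does not, for instance, in the cyclic covers of a knot exterior — and then $p$ extends only to a \emph{branched} cover of $S^3$, from which no contradiction follows. Making the lifting work genuinely uses Theorem~\ref{main-theorem2}(1) — that a parabolic generating pair of a $2$-bridge link group consists of meridians and meets every component — together with the rigidity of peripheral subgroups of hyperbolic link groups; and one must take some care, in the two-component case, to keep track of which cusps of $\tilde M$ lie over which cusps of $M$ and correspond to which components of $K(r')$.
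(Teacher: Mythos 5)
Your argument is correct, but you should know that the paper itself offers no proof of Theorem~\ref{Agol-original-theorem}: it is quoted as Agol's result, proved by a ping-pong argument on the ideal boundary of the $\mathrm{CAT(0)}$ cube complex of the link complement, and in Agol's scheme it is the \emph{source} of Theorem~\ref{main-theorem2}(1) rather than a consequence of it. What you have done is reverse that implication, deducing Theorem~\ref{Agol-original-theorem} from Theorems~\ref{main-theorem} and~\ref{main-theorem2}(1); this is legitimate and not circular, since the paper proves Theorem~\ref{main-theorem2}(1) by the homological double-branched-covering argument and Theorem~\ref{main-theorem} is established in the companion paper \cite{AOPSY}, neither using Theorem~\ref{Agol-original-theorem}. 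The individual steps check out: the conjugacy rigidity of maximal parabolic subgroups (distinct peripheral subgroups intersect trivially, and the stabiliser of a parabolic fixed point in a torsion-free lattice is the peripheral $\ZZ^2$ itself) disposes of the $\ZZ^2$ case and later pins each meridian of $K(r')$ to the meridian slope of the cusp of $E(K(r))$ beneath it; Heckoid groups are excluded because their singular loci are non-empty, so they have torsion while $H\le G(r)$ does not; finite covolume of $G(r')$ gives finite index; Theorem~\ref{main-theorem2}(1) applied to $H\cong G(r')$, together with the $H_1$ computation, shows every cusp of the cover carries a meridian conjugate to $\alpha^{\pm1}$ or $\beta^{\pm1}$; and since the lifted slopes contain the meridians of $M$, the meridian filling of $M$ pulls back to an honest (unbranched) covering of $S^3$, forcing degree one. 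The trade-off between the two routes is clear: Agol's ping-pong is self-contained, combinatorial, and yields freeness directly without any classification input; your route is soft and short but leans on the full weight of Theorem~\ref{main-theorem} and on Theorem~\ref{main-theorem2}(1) applied to the a priori unknown subgroup $G(r')$, not merely to $G(r)$ itself.
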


He proved this theorem
through a beautiful ping-pong argument applied to the ideal boundary
of the universal covering space of the natural local $\mathrm{CAT(0)}$ cubical complex 
of the $2$-bridge link complement,
which is constructed from a reduced alternating diagram
of the $2$-bridge link.

\medskip

This paper is organised as follows.
In Section \ref{sec:description-theorem},
we give a detailed description of Theorem \ref{main-theorem2}.
In Section \ref{sec:inverting-elliptic-element},
we recall Boileau's key suggestion (Proposition \ref{prop:strong-involution_1})
which relates parabolic generating pairs of hyperbolic $2$-bridge links
to strong inversions of the links,
and present its extension to Heckoid orbifolds
(Proposition \ref{prop:strong-involution_2}).
In Section \ref{sec:two-bridge-link-group},
we classify the strong inversions of hyperbolic $2$-bridge links
and list all possible parabolic generating pairs of the link groups
(Propositions \ref{prop:strong-invertion_1} and \ref{prop:strong-invertion_2}).
In Sections \ref{sec:long-meridian-pair} and \ref{sec:extra-meridian-pair},
we show that all possible parabolic generating pairs,
except for the upper and lower meridian pairs, 
are not generating pairs,
by using the homology of the double branched coverings
(Propositions \ref{prop:long-meridian-pair} and \ref{prop:extra-meridian-pair}).
Thus the proof of the first assertion of Theorem \ref{main-theorem2}
is completed in this section.
In Section \ref{sec:Heckoid-orb},
we prove the second assertion of Theorem \ref{main-theorem2},
after determining the orientation-preserving isometry groups 
of Heckoid orbifolds (Proposition \ref{prop:isometry-group-Heckoid})
by using \cite[Proposition 12.6]{AOPSY}.
In Section \ref{sec:application},
we give a characterisation 
of epimorphims between $2$-bridge knot groups
(Theorem \ref{thm:cor-epimorohism})
and
a characterisation of degree one maps 
between the exteriors of hyperbolic $2$-bridge links
(Theorem \ref{thm:degree-one}),
by using the main Theorem \ref{main-theorem2}.

\medskip
{\bf Acknowledgement.}
The last author would like to thank Ian Agol 
for sending the slide of his talk \cite{Agol},
encouraging him (and any of his collaborators) to write up the proof,
and describing key ideas of the proof.
He would also like to thank Michel Boileau
for enlightening conversation.
His sincere thanks also go to all the other authors 
for joining the project to give a proof to Agol's announcement.
The authors would like to thank the referee for his/her very careful reading
and for suggesting the possibility of characterisation
of degree one maps between the exteriors of hyperbolic $2$-bridge links
(Theorem \ref{thm:degree-one}).
The second author was supported by Basic Science Research Program
through the National Research Foundation of Korea(NRF) funded
by the Ministry of Education, Science and Technology(2017R1D1A1B03029904).
The fourth author was supported by JSPS Grants-in-Aid 15H03620, 20K03614,
and by Osaka City University Advanced Mathematical Institute (MEXT Joint Usage/Research Center on Mathematics and Theoretical Physics JPMXP0619217849).

\section{Precise description of Theorem \ref{main-theorem2}}
\label{sec:description-theorem}

We first give a precise description of Theorem \ref{main-theorem2}(1).
For a rational number $r=q/p$, let $K(r)$ be the $2$-bridge link of slope $r$.
Recall that $K(r)$ is hyperbolic if and only if $q\not\equiv \pm 1 \pmod{p}$.
When $r$ satisfies this condition,
the {\it hyperbolic $2$-bridge link group} $G(r)$ is defined to be
the Kleinian group, which is unique up to conjugation,
such that $\HH^3/G(r)$ is homeomorphic to $S^3-K(r)$
as an oriented manifold.
Thus $G(r)$ is isomorphic to $\pi_1(E(K(r)))$,
where $E(K(r))=S^3-\interior N(K(r))$ is the {\it exterior} 
(the complement of an open regular neighbourhood) of $K(r)$.
The Kleinian group $G(r)$ is generated by two parabolic transformations, and
it is proved by
Adams~\cite[Theorem 4.3]{Adams1} that 
any parabolic generating pair 
of $G(r)$ consists of meridians.
Here an element of $G(r)\cong\pi_1(E(K(r)))$ is called a {\it meridian}
if it is freely homotopic to a meridional loop in $\partial E(K(r))$,
i.e. a simple loop in $\partial E(K(r))$ that bounds an essential disc in $N(K(r))$.
This implies that any parabolic generating pair $\{\alpha,\beta\}$ is represented 
by an arc properly embedded in $E(K(r))$,
together with a pair of meridional loops on $\partial E(K(r))$ passing through
the endpoints of the arc.
(See \cite[Section 2]{Lee-Sakuma_2016} for more detailed explanation.)
It is obvious that 
the meridian pair represented by the upper tunnel $\tau_{+}$
(resp. the lower tunnel $\tau_{-}$)
forms a parabolic generating pair of $G(r)\cong \pi_1(E(K(r)))$,
and it is called the {\it upper meridian pair}
(resp. {\it lower meridian pair}) of $\pi_1(E(K(r)))$.
See Figure \ref{generating-pairs}(1), where the black bold graph represent
a $2$-bridge link, and the two red thin graphs represent the upper and lower
meridian pairs, respectively. 
The \lq weight' $\infty$ of the $2$-bridge link 
indicates that we consider the link exterior and that its boundary tori
form the parabolic locus of the hyperbolic structure.
For precise definitions of
$2$-bridge links and upper/lower tunnels,
please see the companion paper \cite[Section 2]{AOPSY}.
{\it Theorem \ref{main-theorem2}(1) says that,
for each hyperbolic $2$-bridge link group $G(r)$,
(i) the upper meridian pair and the lower meridian pair 
are the only parabolic generating pairs of $G(r)$,
and (ii) they are not equivalent.}

\begin{remark}
{\rm
The upper/lower tunnels are actually {\it unknotting tunnels},
namely the complement of an open regular neighbourhood of the tunnel
is a genus $2$ handlebody. 
Moreover, the upper (resp. lower)
meridian pair is the image of a free generating pair
of the fundamental group of the complementary handlebody of 
the lower (resp. upper) tunnel.
Note that we do not know a priori that 
the tunnel associated to a parabolic generating pair is an unknotting tunnel.
}
\end{remark}


Next, we give a precise description of Theorem \ref{main-theorem2}(2).
The Heckoid groups were first introduced by Riley~\cite{Riley1992}
as an analogy of the classical Hecke group,
and it was reformulated by Lee and Sakuma \cite{Lee-Sakuma_2013}
following Agol \cite{Agol}
as the orbifold fundamental groups of the Heckoid orbifolds
illustrated in Figure \ref{generating-pairs}(2)-(4).
(See \cite{BMP, Boileau-Porti, CHK} for basic terminologies and facts
concerning orbifolds.)
These figures illustrate weighted graphs $(S^3,\Sigma, w)$
whose explicit descriptions are given by Definition \ref{def:Heckoid-orbifold}.
For each weighted graph $(S^3,\Sigma, w)$ in the figure,
let $(M_0,P)$ be the pair of a compact $3$-orbifold $M_0$
and a compact $2$-suborbifold $P$ of $\partial M_0$
determined by the rules described below.
Let $\Sigma_{\infty}$ 
be the subgraph of $\Sigma$ consisting of the edges 
with weight $\infty$, 
and let $\Sigma_s$ be the subgraph of $\Sigma$ consisting of the edges 
with integral weight.
\begin{enumerate}
\item
The underlying space $|M_0|$ of $M_0$ is the complement of an open regular neighbourhood of 
the subgraph $\Sigma_{\infty}$.
\item
The singular set of $M_0$ is 
$\Sigma_0:=\Sigma_s\cap |M_0|$, 
where the index of each edge of the singular set is given by the weight $w(e)$
of the corresponding edge $e$ of $\Sigma_s$.\item
For an edge $e$ of $\Sigma_\infty$,
let $P$ be the $2$-suborbifold of $\partial M_0$ defined as follows.
\begin{enumerate}
\item
In Figure \ref{generating-pairs}(2), $P$ consists of 
two annuli in $\partial M_0$
whose cores, respectively, are meridians of the two edges of $\Sigma_{\infty}$.
\item
In Figure \ref{generating-pairs}(3), $P$ consists of an annulus in $\partial M_0$
whose core is a meridian of the single edge of $\Sigma_{\infty}$.
\item
In Figure \ref{generating-pairs}(4), $P$ consists of two copies of the
annular orbifold 
$D^2(2,2)$ (the $2$-orbifold with underlying space the disc
and with two cone points of index $2$)
in $\partial M_0$
each of which is 
bounded by a meridian of an edge of $\Sigma_{\infty}$.
\end{enumerate}
\end{enumerate}

By \cite[Lemmas 6.3 and 6.6]{Lee-Sakuma_2013},
the orbifold pair $(M_0,P)$ is a Haken pared orbifold
(see \cite[Definition 8.3.7]{Boileau-Porti}) 
and admits a unique complete hyperbolic structure,
which is geometrically finite 
(see \cite[Proposition 6.7]{Lee-Sakuma_2013}, \cite[Section 3]{AOPSY}).
Namely there is a geometrically finite Kleinian group $\Gamma$,
unique up to conjugation,
such that $M=\HH^3/\Gamma$ is homeomorphic to the 
interior of the compact orbifold $M_0$,
such that $P$ represents to the parabolic locus.
To be more precise, there are positive constants $\delta$ and $\mu$, such that
\begin{align}
\label{formula:convex-core}
(M_0,P) \cong 
(\mathrm{thick}_{\mu}(C_{\delta}(M)),
\partial(\mathrm{thick}_{\mu}(C_{\delta}(M)))\cap 
(\mathrm{thin}_{\mu}(C_{\delta}(M))),
\end{align}
where $C_{\delta}(M)$
is the closed $\delta$-neighbourhood of the convex core $C(M)$ of $M$, and 
$\mathrm{thick}_{\mu}(C_{\delta}(M))$ and 
$\mathrm{thin}_{\mu}(C_{\delta}(M))$ are 
the $\mu$-thick part and $\mu$-thin part
(see \cite{Boileau-Porti} for terminology).
The $\mu$-thin part $\mathrm{thin}_{\mu}(C_{\delta}(M))$
consists of only cuspidal components and 
it is isomorphic to $P\times [0,\infty)$.
We denote by $P_{\alpha}$ and $P_{\beta}$ the components of $P$
corresponding to the parabolic transformations $\alpha$ and $\beta$, respectively,
i.e., $P_{\alpha}$ (resp. $P_{\beta}$) is the component
of $P$ which is the image of a subsurface of an 
$\alpha$-invariant (resp. $\beta$-invariant) horosphere.
The pair $(M_0,P)$ is also regarded as a relative compactification
of the pair consisting of a non-cuspidal part of $M$ and its boundary
(see \cite[Section 5]{AOPSY}).

We denote the pared orbifold $\orbm:=(M_0,P)$ by 
$\orbm_{0}(r;n)$, $\orbm_{1}(r;m)$, or $\orbm_{2}(r;m)$
according as it is described by the weighted graph in 
Figure \ref{generating-pairs}(2), (3), or (4).
We also denote the Kleinian group $\Gamma$
that uniformises the pared orbifold $\orbm$ by $\pi_1(\orbm)$.
(More generally, we use the symbol $\pi_1(\cdot)$ to denote
the orbifold fundamental group.)
For $r=q/p\in\QQ$ and $n\in \frac{1}{2}\NN_{\ge 3}$,
the {\it Heckoid group} $G(r;n)$ is the Kleinian group
that is defined as follows.
\[
G(r;n) \cong 
\begin{cases}
\pi_1(\orbm_{0}(r;n))
& \text{if $n\in \NN_{\ge 2}$,}\\
\pi_1(\orbm_{1}(\hat r;m))
& \text{if $n=m/2$ for some odd $m\ge 2$ and if $p$ is odd,}\\
\pi_1(\orbm_{2}(\hat r;m))
& \text{if $n=m/2$ for some odd $m\ge 2$ and if $p$ is even,}\\
\end{cases}
\]
where $\hat r$ is defined from $r=q/p$ by the following rule.
\[
\hat r
=
\begin{cases}
\frac{q/2}{p}
& \text{if $p$ is odd and $q$ is even,}\\
\frac{(p+q)/2}{p}
& \text{if $p$ is odd and $q$ is odd,}\\
\frac{q}{p/2}
& \text{if $p$ is even.}
\end{cases}
\]
For the background of this rather complicated definition,
please see \cite{Lee-Sakuma_2016, AOPSY}.

By \cite[p.197]{Adams1} (cf. \cite[Lemma 7.2]{AOPSY}),
any parabolic generating pair $\{\alpha,\beta\}$ of the Heckoid group $G(r;n)$
consists of primitive elements.
Since the parabolic locus $P_{\alpha}\cong P_{\beta}$ is an annular orbifold 
$S^1\times I$ or $D^2(2,2)$, this implies that
$\alpha$ and $\beta$ are freely homotopic to the unique (up to isotopy)
essential simple loop in $P_{\alpha}$ and $P_{\beta}$, respectively.
Thus they are freely homotopic to a meridional loop 
of the corresponding edge of $\Sigma_{\infty}$.
Since $G(r;n)$
is identified with a quotient of
the usual fundamental group $\pi_1(|M_0|-\Sigma_0)$,
it follows that
any parabolic generating pair is represented by
a graph embedded in $|M_0|-\Sigma_0$
consisting of two meridional loops of $\Sigma_{\infty}$
and an arc joining them.
It follows from the definition of Heckoid groups that
the pairs of parabolic elements represented by 
the red graphs in Figure \ref{generating-pairs}(2)-(4) 
are generating pairs (see \cite[Section 3]{Lee-Sakuma_2013}, \cite[Proposition 3.3]{AOPSY}).
{\it Theorem \ref{main-theorem2}(2) says that
each Heckoid group $G(r;n)$ admits a unique parabolic generating pair,
and it is illustrated by Figure \ref{generating-pairs}
(2), (3) or (4) 
according to the type of $G(r;n)$.}

\medskip

In the special case where $r\in\ZZ$, the Heckoid group $G(r;n)$ 
with $n=\frac{m}{2}\in\frac{1}{2}\NN_{\ge 3}$
is conjugate to $G(0;n)$,
and it is a Fuchsian group,
which is essentially equal to the classical {\it Hecke group}
$H(m)\cong\pi_1(S^2(2,m,\infty))$, generated by
the following matrices (\cite{Hecke}):
\[
A_m:= \begin{pmatrix} 1 & 2\cos\frac{\pi}{m} \\ 0 & 1 \end{pmatrix}
\quad \mbox{and} \quad
Q:=\begin{pmatrix} 0 & 1 \\ -1 & 0 \end{pmatrix}
\]
To be precise, $G(r;n)\cong G(0;n)$
is conjugate to the subgroup of $H(m)$, with $n=\frac{m}{2}$,
generated by $\{A_m,QA_mQ^{-1}\}$.
Thus according to whether (a) $n$ is an integer $\ge 2$
or (b) a half-integer $n=m/2$ ($m$: odd $\ge 3$),
the Heckoid group $G(0;n)$ is conjugate to
(a) the index $2$ subgroup of the Hecke group $H(m)$
isomorphic to $\pi_1(S^2(n,\infty,\infty))$ 
or 
(b) the Hecke group $H(m)$, with $m=2n$.
These follow from the following topological observation.
\begin{enumerate}
\item
For every $r\in\ZZ$ and $n\in \NN_{\ge 2}$, we have
$\orbm_{0}(r;n)\cong \orbm_{0}(0;n)\cong S^2(n,\infty,\infty)\times I$,
and $S^2(n,\infty,\infty)$ is a double orbifold covering of
$S^2(2,m,\infty)$ with $m=2n$.
\item
For every $r\in\ZZ$ and an odd integer $m\in \NN_{\ge 3}$, we have
$\orbm_{1}(\hat r;m)\cong \orbm_{1}(0;m)\cong S^2(2,m,\infty)\times I$.
\end{enumerate}

In \cite[Proposition 4.2]{Knapp},
Knapp completely determined when a given pair of non-commuting parabolic transformations
of the hyperbolic plane generate a discrete group,
by using Poincar\'e's theorem on fundamental polyhedra.
In our terminology, it is described as follows,
which particularly implies Theorem \ref{main-theorem2}(2) 
for the special case where $r\in\ZZ$. 

\begin{proposition}
\label{prop:classification-fuchsian-case} 
\cite[Proposition 4.2]{Knapp}
A non-elementary non-free Fuchsian group is generated by two parabolic transformations,
if and only if it is conjugate to 
the Fuchsian group $G(0;n)$ for some $n\in \frac{1}{2}\NN_{\ge 3}$.
Moreover, 
$\{A_m,QA_mQ^{-1}\}$, where $m=2n$,
is the unique parabolic generating pair of
$G(0;n)$, up to equivalence.
\end{proposition}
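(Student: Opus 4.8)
The plan is to put the unordered parabolic pair into a one-parameter normal form, reduce the statement to an elementary discreteness question for two-generator Fuchsian groups, and then apply Poincar\'e's theorem on fundamental polygons. First I would set up the normal form. If the two parabolic generators commute they generate an elementary group, which is excluded, so we may assume they do not commute; then they have distinct fixed points on $\partial\HH^2$ and automatically generate a non-elementary group. Conjugating in $\PSL(2,\RR)$ we may take $\alpha\colon z\mapsto z+a$ (fixing $\infty$) and $\beta\colon z\mapsto z/(bz+1)$ (fixing $0$) with $a,b\in\RR\setminus\{0\}$, and put $\mu:=ab$, so that $\tr(\alpha\beta)=2+\mu$. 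Conjugation by a diagonal matrix preserves $\mu$ and normalises $|a|=|b|$; replacing $\beta$ by $\beta^{-1}$ (an equivalence of pairs) replaces $\mu$ by $-\mu$; and $\mu=0$ is impossible since $a,b\neq 0$. So we may assume $\mu<0$. A short computation then shows that, up to conjugacy together with the equivalence relation, the unordered pair $\{\alpha,\beta\}$ is determined by $|\mu|$, whereas $\tr(\alpha\beta)=2+\mu$ is a conjugacy invariant of the pair; hence the equivalence class of a parabolic generating pair of a given $\Gamma$ is recorded by the single number $|\mu|$.

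Next I would run a trichotomy on $\mu<0$. (a) If $\mu\le -4$, normalise $a=-b=\sqrt{|\mu|}$, so $|a|,|b|\ge 2$; a classical ping-pong argument (Sanov's lemma, together with its limiting case $|\mu|=4$, where $\Gamma$ is a thrice-punctured sphere group) shows $\Gamma$ is discrete and \emph{free} of rank two, so this case does not occur when $\Gamma$ is non-free. (b) If $-4<\mu<0$, then $\tr(\alpha\beta)\in(-2,2)$, so $\gamma:=\alpha\beta$ is elliptic; if $\Gamma$ is discrete then $\gamma$ has finite order $q$, and $\Gamma=\langle\alpha,\gamma\rangle$ is generated by the parabolic $\alpha$ and the order-$q$ elliptic $\gamma$. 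One forms the natural candidate fundamental polygon for $\Gamma$ (bounded by the geodesics paired by $\alpha$ and by $\gamma$) and verifies the hypotheses of Poincar\'e's theorem; the only substantive condition is the cycle/angle condition at the finite vertices, and expressing the relevant angle in terms of $\mu$ shows that it holds exactly when $\tr(\alpha\beta)=-2\cos(2\pi/m)$, i.e.\ $\mu=-4\cos^2(\pi/m)$, for some integer $m\ge 3$.

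(c) When $\mu=-4\cos^2(\pi/m)$ for an integer $m\ge 3$, Poincar\'e's theorem applies, $\Gamma$ is discrete, and reading off the side-pairing presentation identifies $\Gamma$ with the triangle group $\pi_1(S^2(2,m,\infty))$ for $m$ odd and with its index-two subgroup $\pi_1(S^2(\tfrac{m}{2},\infty,\infty))$ for $m$ even; comparison with the matrices $A_m$ and $Q$ shows $\Gamma$ is conjugate to $G(0;n)$ with $n=m/2$. Conversely each $G(0;n)$ is a discrete, non-elementary, non-free Fuchsian group, and $\{A_{2n},QA_{2n}Q^{-1}\}$ is a parabolic generating pair realising $\mu=-4\cos^2(\pi/(2n))$; this settles the ``if'' direction. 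For uniqueness, note that the orbifold groups appearing in (c) are pairwise non-isomorphic for distinct parameters (e.g.\ by abelianisation and torsion), so $\Gamma\cong G(0;n)$ forces $m=2n$, hence $|\mu|=4\cos^2(\pi/(2n))$ is determined by $\Gamma$; combined with the first paragraph, this yields the uniqueness of the parabolic generating pair up to equivalence.

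The hardest step will be the converse half of (b): showing that for $\mu\in(-4,0)$ \emph{not} of the form $-4\cos^2(\pi/m)$ the group $\Gamma$ is genuinely non-discrete, since failure of the sufficient condition in Poincar\'e's theorem does not by itself imply non-discreteness. This is the core of Knapp's argument; it is handled by producing in $\langle\alpha,\gamma\rangle$ elliptic elements of unbounded order --- via a Euclidean-algorithm-type analysis of the rotation numbers obtained by alternately applying $\alpha$ and conjugates of $\gamma$ --- or, equivalently, by a trace manipulation in the spirit of J\o rgensen's inequality.
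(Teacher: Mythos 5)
The paper offers no proof of this proposition---it is imported wholesale from Knapp, and the one-line description the paper gives of Knapp's method (Poincar\'e's theorem on fundamental polygons) is exactly the route you reconstruct: the reduction of the unordered pair to the single invariant $\mu=ab$ with $\tr(\alpha\beta)=2+\mu$, the free/elliptic trichotomy, the correct discreteness values $\mu=-4\cos^2(\pi/m)$, and the correct identification of the resulting groups with $G(0;n)$, $m=2n$. Your outline is sound, and you rightly isolate the one step Poincar\'e's theorem cannot give---non-discreteness for $\mu\in(-4,0)$ off these values---which is precisely the content that the citation to Knapp supplies in the paper, so no further work is expected of this proposition here.
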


\section{Inverting elliptic elements for 
the parabolic generating pairs}
\label{sec:inverting-elliptic-element}

In this section, we recall Boileau's key suggestion \cite{Boileau},
and present its slight extension.
His suggestion is to use the following well-known fact.
Let 
$\Gamma =\langle \alpha, \beta\rangle$ be a non-elementary Kleinian group 
generated by two parabolic transformations $\alpha$ and $\beta$,
and $M=\HH^3/\Gamma$ the quotient hyperbolic $3$-manifold or orbifold.
Let $\eta$ be the geodesic 
joining the parabolic fixed points of $\alpha$ and $\beta$,
and let $h$ be the $\pi$-rotation around $\eta$.
Then we have
\begin{align}
\label{formula:involution}
(h\alpha h^{-1},h\beta h^{-1})=(\alpha^{-1},\beta^{-1}).
\end{align}
We call $h$ the {\it inverting elliptic element} for 
the parabolic generating pair $\{\alpha,\beta\}$ of 
the Kleinian group $\Gamma$.
If $h\notin \Gamma$, then $h$ induces an isometric involution on $M$,
whereas if $h\in\Gamma$, then $M$ is an orbifold with non-empty singular set
and the axis $\eta$ projects to a \lq geodesic' contained in the subset of the 
singular set consisting of even degree edges.
(By a geodesic (arc) in a complete hyperbolic orbifold,
we mean the image of a geodesic (arc) in the universal cover $\HH^3$ in the orbifold.)

We first treat the case where 
$\Gamma$ is a hyperbolic $2$-bridge link group. 
We prepare some terminologies.
For a link $K$ in $S^3$, 
a {\it tunnel} for $K$ is an arc, $\tau$, in $S^3$
such that $\tau\cap K=\partial \tau$.
We assume $\tau$ intersects a fixed regular neighbourhood $N(K)$ of $K$
in two arcs, each of which forms a radius of a meridian disc of $N(K)$.
Then the intersection of $\tau$ with the exterior $E(K)=S^3-\interior N(K)$
is an arc properly embedded in $E(K)$.
Thus, as described in the first paragraph of Section \ref{sec:description-theorem}
(cf. \cite[Section 2]{Lee-Sakuma_2016}),
it determines a pair of meridians in the link group $\pi_1(E(K))$, up to equivalence.
We call it the {\it meridian pair determined by the tunnel $\tau$}.
Here 
two meridian pairs $\{m_1,m_2\}$ and $\{m_1',m_2'\}$ of $\pi_1(E(K))$
is said to be {\it equivalent}, if $\{m_1',m_2'\}$ is
equal to $\{m_1^{\epsilon_1}, m_2^{\epsilon_2}\}$ 
for some $\epsilon_1, \epsilon_2 \in \{\pm1\}$
up to simultaneous conjugation.

A {\it strong inversion} of $K$ is 
an orientation-preserving involution,
which we often denote by the symbol $h$, of
$S^3$ preserving $K$ setwise
such that the fixed point set $\Fix(h)$ is a circle
intersecting each component of $K$ in two points.
Note that $\Fix(h)$ consists of $2\mu$ tunnels,
where $\mu$ is the number of components of $K$.
Then the following proposition,
which holds a key to the proof of 
Theorem \ref{main-theorem2}(1),
was suggested by Boileau~\cite{Boileau}
(cf. \cite[Proposition 2.1]{Lee-Sakuma_2012}).
(See \cite{Adams-Reid, Bleiler-Moriah, Futer} for interesting related results.)

\begin{proposition}
\label{prop:strong-involution_1}
Let $K(r)$ be a hyperbolic $2$-bridge link,
and let $\{\alpha,\beta\}$ be a
parabolic generating pair of the link group $G(r)$.
Then there is a strong inversion, $h$, of $K(r)$
such that 
$\{\alpha,\beta\}$ is a meridian pair
represented by a tunnel contained in $\Fix(h)$.
\end{proposition}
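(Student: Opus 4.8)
The plan is to push the inverting elliptic element for $\{\alpha,\beta\}$ down to the manifold $M:=\HH^3/G(r)=S^3-K(r)$ and then extend it across $K(r)$ to produce the required strong inversion. I expect the extension step to be the main obstacle: upgrading an isometric involution of the \emph{open} manifold $M$ to an honest involution of the closed pair $(S^3,K(r))$ --- rather than just a self-homeomorphism of $S^3-K(r)$ --- is where the special features of $2$-bridge links have to enter.

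By Adams' theorem the generators $\alpha,\beta$ are meridians of $K(r)$. Let $\eta$ be the geodesic joining the parabolic fixed points of $\alpha$ and $\beta$, and let $h$ be the $\pi$-rotation about $\eta$, so that $(h\alpha h^{-1},h\beta h^{-1})=(\alpha^{-1},\beta^{-1})$ by \eqref{formula:involution}; thus $h$ normalises $G(r)$, and $h\notin G(r)$ since $M$ is a manifold (as recalled just after \eqref{formula:involution}). Hence $h$ induces an orientation-preserving isometric involution $\bar h$ of $M$, which extends to the compact core $E(K(r))$, permuting its boundary tori. As $h$ fixes $\eta$ pointwise, $\bar h$ fixes the image $\bar\eta$ of $\eta$, a complete geodesic one end of which exits the cusp of $\alpha$ and the other the cusp of $\beta$; in particular $\bar h$ preserves each of these cusps. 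Moreover, when $K(r)$ has two components, $H_1(E(K(r)))\cong\ZZ^2$ with the two meridians as a basis and the images of $\alpha$ and $\beta$ must generate this group; hence $\alpha$ and $\beta$ are meridians of distinct components, and $\bar h$ preserves each component of $K(r)$.

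To extend $\bar h$ across $K(r)$, conjugate so that $\Fix(\alpha)=\infty$ and $\Fix(\beta)=0$: then $\eta$ is the vertical axis over $0$, $h$ acts as $z\mapsto -z$ on each horosphere centred at $\infty$ (and symmetrically at $0$), and therefore $\bar h$ restricts to $-\operatorname{id}$ on each cusp cross-section of $M$. Since $-\operatorname{id}$ preserves every slope and extends over a solid torus as an involution with one-dimensional fixed point set, $\bar h$ extends across the meridional Dehn fillings --- that is, across a tubular neighbourhood of $K(r)$ --- to an orientation-preserving involution of $S^3$ carrying $K(r)$ to itself, which I again denote $h$. Under this extension the image $\hat\eta$ of $\eta$ becomes an arc in $S^3$ with its two endpoints on $K(r)$, one on each component when there are two, with $\interior\hat\eta$ disjoint from $K(r)$, and with $\hat\eta\subseteq\Fix(h)$.

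It remains to analyse $F:=\Fix(h)$. Since $F\supseteq\hat\eta\neq\emptyset$, Smith theory shows that $F$ is a single circle. For each component $C$ of $K(r)$ the restriction $h|_C$ is an involution of the circle $C$ which is nontrivial (otherwise $C=F\supseteq\hat\eta$, contradicting $\interior\hat\eta\cap K(r)=\emptyset$), so $F\cap C=\Fix(h|_C)$ consists of $0$ or $2$ points; and it consists of exactly $2$, because $\hat\eta$ contributes a point of $F$ on every component. Hence $F$ meets $K(r)$ in $2\mu$ points, which cut $F$ into $2\mu$ tunnels, one of them being $\hat\eta$; in particular $h$ is a strong inversion of $K(r)$. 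Finally, since $\eta$ joins $\Fix(\alpha)$ to $\Fix(\beta)$, the meridian pair determined by the tunnel $\hat\eta\subseteq\Fix(h)$ is, up to equivalence, $\{\alpha,\beta\}$, which completes the proof.
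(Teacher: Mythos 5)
Your argument is correct and follows essentially the same route as the paper's proof: descend the inverting elliptic element to an involution of $S^3-K(r)$, observe that it acts hyperelliptically on the cusp tori (your $-\operatorname{id}$ computation is exactly the paper's ``hyper-elliptic involution'' step), extend across the tubular neighbourhood to a strong inversion, and invoke Adams' theorem to identify $\{\alpha,\beta\}$ with the meridian pair of the tunnel $\bar\eta$. The only difference is that you spell out the extension and the Smith-theory bookkeeping that the paper leaves implicit.
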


\begin{proof}
Though the proof is given in \cite{Lee-Sakuma_2012},
we recall the proof as a warm-up for the treatment of Heckoid groups.
By definition, $G(r)$ is a non-elementary Kleinian group generated by
the parabolic transformations  $\alpha$ and $\beta$.
Let $h$ be the inverting elliptic element for 
the parabolic generating pair $\{\alpha,\beta\}$ of $G(r)$.
Since $h$ is an element of the normaliser of $G(r)$ in $\Isom^+(\HH^3)$
which does not belong to $G(r)$ (recall that $G(r)$ is torsion free),
$h$ descends to an orientation-preserving involution, $\bar h$,
of $\HH^3/G(r)\cong S^3-K(r)$.
Since each component of $\partial E(K(r))$
corresponds to one of the parabolic loci $P_{\alpha}$ and $P_{\beta}$
(which in turn follows from the fact that $\alpha$ and $\beta$
generate $H_1(E(K(r))$),
it follows that
the restriction of $\bar h$ to each of the components of
$\partial E(K(r))$ is a hyper-elliptic involution.
Hence $\bar h$ extends to an involution of the pair $(S^3,K(r))$,
which we continue to denote by $\bar h$.
Then $\bar h$ is a strong inversion of $K(r)$,
and $\Fix(\bar h)$ contains the image $\bar \eta$ of
the axis $\eta$ of the inverting elliptic element $h$.
Then, by \cite[Theorem 4.3]{Adams1},
$\{\alpha,\beta\}$ is the meridian pair
represented by the tunnel for $K(r)$ 
that is obtained as the closure of $\bar\eta$ in $\Fix(\bar h)$.
By denoting the strong inversion $\bar h$ by $h$,
we obtain the desired result.
\end{proof}

We next treat the case where $\Gamma$ is a Heckoid group $G(r;n)$.
Let $\orbm=(M_0,P)$ be the corresponding Heckoid orbifold,
and identify $M_0$ and $P$ with a subspace of the quotient hyperbolic orbifold 
$M=\HH^3/\Gamma$ through the isomorphism (\ref{formula:convex-core})
in the introduction.
As is noted in the introduction
(see \cite[p.197]{Adams1}, \cite[Lemma 7.2]{AOPSY}),
any parabolic generating pair $\{\alpha,\beta\}$ of $G(r;n)$
consists of primitive elements, and 
$\alpha$ and $\beta$ are freely homotopic to the unique (up to isotopy)
essential simple loops in the annular orbifolds $P_{\alpha}$ and $P_{\beta}$, respectively.
Thus the equivalence class of $\{\alpha,\beta\}$ is uniquely determined
by the proper geodesic arc $\bar\eta\cap M_0$,
where $\bar\eta$ is the \lq geodesic' in the orbifold $M$
obtained as the image of 
the axis $\eta$ of the inverting elliptic element $h$.
(Note that if $\eta$ intersects orthogonally the axis of an
even order elliptic transformation in $\Gamma$,
then the underlying space of $\bar\eta$ has an endpoint
in an even degree singular locus 
of the orbifold $M$.)
If $h\notin\Gamma$, then it induces an isometric involution, $\bar h$, on $M$,
and $\bar\eta$ is contained in $\Fix(\bar h)$.
If $h\in\Gamma$
then $\bar\eta$ is a \lq geodesic edge path' contained 
in the subset of the singular set of $M$
consisting of even degree edges. 
Thus we have the following analogy of Proposition \ref{prop:strong-involution_1},
which holds a key to the proof of 
Theorem \ref{main-theorem2}(2).

\begin{proposition}
\label{prop:strong-involution_2}
Consider the Heckoid group $\Gamma=G(r;n)$,
and let $\orbm=(M_0,P)$ be the corresponding Heckoid orbifold.
Let $\{\alpha,\beta\}$ be a parabolic generating pair of $G(r;n)$,
$h$  the inverting elliptic element for 
the parabolic generating pair $\{\alpha,\beta\}$,
and $\eta$ the axis of $h$,
and $\bar\eta$ be the image of $\eta$ in $M=\HH^3/\Gamma$.
Then $\{\alpha,\beta\}$ is represented by $\bar\eta\cap M_0$.
Moreover the following holds.
\begin{enumerate}
\item
If $h\notin \Gamma$, then it descends to an involution, $\bar h$, of $M$,
such that $\bar\eta$ is contained in $\Fix(\bar h)$.
\item
If $h\in \Gamma$, then $\bar\eta\cap M_0$ is
a geodesic edge path
joining $P_{\alpha}$ and $P_{\beta}$,
contained in the subset of the singular set of $M_0$
consisting of even degree edges. 
\end{enumerate}
\end{proposition}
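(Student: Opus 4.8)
The plan is to mirror the argument given for Proposition~\ref{prop:strong-involution_1} (the hyperbolic $2$-bridge link case), but to keep careful track of the orbifold structure, since now $h$ may or may not lie in $\Gamma=G(r;n)$. First I would recall the setup: $\Gamma$ is a geometrically finite Kleinian group with $M=\HH^3/\Gamma$ homeomorphic to the interior of the Haken pared orbifold $\orbm=(M_0,P)$, and by \cite[p.197]{Adams1} (cf.\ \cite[Lemma 7.2]{AOPSY}) the elements $\alpha,\beta$ of the parabolic generating pair are primitive, hence freely homotopic to the essential simple loops in the annular $2$-suborbifolds $P_\alpha,P_\beta$. Let $h$ be the inverting elliptic element for $\{\alpha,\beta\}$, so $h$ is the $\pi$-rotation around the geodesic $\eta$ joining the fixed points of $\alpha$ and $\beta$, and by formula~(\ref{formula:involution}) we have $h\alpha h^{-1}=\alpha^{-1}$, $h\beta h^{-1}=\beta^{-1}$; in particular $h$ normalises $\Gamma=\langle\alpha,\beta\rangle$. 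The statement that $\{\alpha,\beta\}$ is represented by $\bar\eta\cap M_0$ then follows exactly as in the discussion immediately preceding the proposition: the equivalence class of the pair is determined by the isotopy class of a proper arc in $|M_0|-\Sigma_0$ joining $P_\alpha$ to $P_\beta$, together with the meridional loops at its endpoints, and $\bar\eta$ is precisely such an arc since its endpoints are the cusps carrying $\alpha$ and $\beta$; for this I should note that $P_\alpha\neq P_\beta$ because $\alpha$ and $\beta$ must map to independent elements (indeed a generating set) of the relevant homology, so $\eta$ runs between two distinct cusp fixed points.

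The dichotomy in (1) and (2) is then the standard observation about an elliptic element normalising a Kleinian group, which is already recorded in the paragraph introducing the inverting elliptic element. For (1): if $h\notin\Gamma$, then $h$ represents a nontrivial element of $N(\Gamma)/\Gamma\subset \Isom^+(M)$, so it descends to an orientation-preserving isometric involution $\bar h$ of $M$; since $\eta$ is $h$-invariant, its image $\bar\eta$ is pointwise fixed by $\bar h$, i.e.\ $\bar\eta\subset\Fix(\bar h)$. For (2): if $h\in\Gamma$, then $h$ is an order-$2$ elliptic element of $\Gamma$, so its axis $\eta$ projects into the singular set $\Sigma_0$ of $M$, and more precisely into the part of the singular set consisting of edges of even order (an axis of a $2$-torsion element lies in the singular locus, and local-group considerations force the relevant edges to have even index). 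Thus $\bar\eta\cap M_0$ is a geodesic edge path in that even-degree subset of $\Sigma_0$ joining $P_\alpha$ to $P_\beta$, as claimed.

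The one genuinely new point, and the step I expect to require the most care, is verifying that $\bar\eta$ really is a \emph{proper} arc in $M_0$ joining $P_\alpha$ to $P_\beta$ and does not, for instance, wander off into another cusp or fail to be embedded. Here is where I would use the structure of $\orbm$ more heavily: $\eta$ is the complete geodesic between the fixed points of $\alpha$ and $\beta$, so it is asymptotic to the corresponding horoballs, hence under the identification~(\ref{formula:convex-core}) it limits into $P_\alpha$ and $P_\beta$; the other boundary components of $M_0$ (if any) are not cusps carrying $\alpha$ or $\beta$, and one checks $\eta$ cannot be asymptotic to them. As for embeddedness of $\bar\eta$ as an arc representing the pair, we do not actually need $\bar\eta$ itself to be embedded — only that, up to the equivalence relation on parabolic generating pairs, the pair is recovered from it; this reduction is exactly what was spelled out in the paragraph before the proposition, so I would simply invoke it. I would then close by remarking, as in the parenthetical comment in that paragraph, that the case $h\in\Gamma$ occurs precisely when $\eta$ meets orthogonally an axis of an even-order elliptic in $\Gamma$, which is what produces endpoints of $\bar\eta$ on even-degree singular loci and is the mechanism behind alternative~(2).
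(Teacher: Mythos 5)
Your proposal follows essentially the same route as the paper: the paper does not give a separate proof of this proposition but derives it from the immediately preceding discussion, namely the primitivity of $\alpha,\beta$ (Adams, cf.\ \cite[Lemma 7.2]{AOPSY}), the observation that the equivalence class of the pair is determined by the proper geodesic arc $\bar\eta\cap M_0$, and the standard dichotomy for an order-$2$ elliptic normalising $\Gamma$ (descend to an involution if $h\notin\Gamma$; project into the even-degree singular locus if $h\in\Gamma$). One side remark in your write-up is incorrect, though harmless: you assert $P_{\alpha}\neq P_{\beta}$, but for Heckoid orbifolds of type $\orbm_{1}(r;m)$ the parabolic locus $P$ is a single annulus, so $P_{\alpha}=P_{\beta}$ there; the fact you actually need --- that the parabolic fixed points of $\alpha$ and $\beta$ in $\partial\HH^3$ are distinct, so $\eta$ is a genuine geodesic --- follows simply from $\alpha$ and $\beta$ being non-commuting parabolics generating a non-elementary group, not from any homological independence.
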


\section{Symmetries of $2$-bridge links and all possible generating meridian pairs
for $2$-bridge link groups}
\label{sec:two-bridge-link-group}

Let $K(r)$ with $r=q/p$ ($q\not\equiv \pm1\pmod{p}$)  be a hyperbolic $2$-bridge link,
and let $\tau_+$ and $\tau_-$ be the upper and lower tunnel for $K(r)$
(see \cite[Section 2]{AOPSY} for an explicit definition).
In this section, we describe all strong inversions of $K(r)$,
up to strong equivalence, 
and list all possible generating meridian pairs of the 
$2$-bridge link group $G(r)$ by using 
Proposition \ref{prop:strong-involution_1}.
Here two strong inversions of a link $K$ are said to be
{\it strongly equivalent}
if they are conjugate by a homeomorphism of $(S^3,K)$
that is pairwise isotopic to the identity.
To this end, we first describe the orientation-preserving isometry group
$\Isom^+(S^3-K(r))$ of the hyperbolic manifold $S^3-K(r)$.

\begin{proposition}
\label{prop:isometry-group}
For a hyperbolic
$2$-bridge link $K(r)$ with $r=q/p$ ($q\not\equiv \pm1\pmod{p}$),
the orientation-preserving isometry group
$\Isom^+(S^3-K(r))$ is given by the following formula.
\[
\Isom^+(S^3-K(r))\cong
\begin{cases}
(\ZZ_2)^2 & \text{if $q^2\not\equiv 1\pmod{p}$}\\
D_4\cong (\ZZ_2)^2\rtimes \ZZ_2 & \text{if $p$ is odd and $q^2 \equiv 1\pmod{p}$, or}\\
         & \text{if $p$ is even and $q^2 \equiv p+1\pmod{2p}$}\\
(\ZZ_2)^3\cong (\ZZ_2)^2\times \ZZ_2 & \text{if $p$ is even and $q^2 \equiv 1\pmod{2p}$}
\end{cases}
\]
Here $D_4$ denotes the order $8$ dihedral group,
which is regarded as a semi-direct product of $(\ZZ_2)^2$ and $\ZZ_2$.
The subgroups $(\ZZ_2)^2$ in the formula consist of those isometries which preserve 
both the upper tunnel $\tau_+$ and 
the lower tunnel $\tau_-$ setwise:
the elements in $\Isom^+(S^3-K(r))$ which are not contained 
in the characteristic subgroup $(\ZZ_2)^2$
interchange $\tau_+$ and $\tau_-$.
\end{proposition}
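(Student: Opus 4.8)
The plan is to turn the statement into a topological symmetry computation and then to enumerate the symmetries. \textbf{Step 1: reduction to symmetries of the pair.} By Mostow rigidity together with Waldhausen's theorem on Haken $3$-manifolds, $\Isom^+(S^3-K(r))$ is isomorphic to the group of isotopy classes of orientation-preserving self-homeomorphisms of $S^3-K(r)$. I would then note that any such homeomorphism permutes the cusps and carries each meridional slope to a meridional slope --- the meridians being intrinsically characterised among the slopes, e.g. by the classification of exceptional Dehn fillings of $2$-bridge link exteriors --- so it extends to an orientation-preserving homeomorphism of the pair $(S^3,K(r))$, and, by the same argument, uniquely up to isotopy of pairs. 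Hence $\Isom^+(S^3-K(r))\cong\Sym^+(S^3,K(r))$, the group of isotopy classes of orientation-preserving homeomorphisms of $(S^3,K(r))$, and it remains to compute this group while keeping track of which elements preserve the tunnels.

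\textbf{Step 2: the characteristic $(\ZZ_2)^2$ and the only possible extra symmetry.} Presenting $K(r)$ as the plat closure of the rational tangle of slope $r=q/p$, I would read off the standard Klein four-group $V$ of orientation-preserving symmetries of $(S^3,K(r))$, each of which preserves the $2$-bridge sphere and both tunnels $\tau_+$ and $\tau_-$; since $K(r)$ is hyperbolic these four classes are distinct, so $V\cong(\ZZ_2)^2\le\Sym^+(S^3,K(r))$. A symmetry outside $V$ must move one of $\tau_\pm$; and since $\tau_+$ and $\tau_-$ correspond to the two continued-fraction expansions of $q/p$ read from opposite ends --- which present $K(r)$ with slopes $q/p$ and $\bar q/p$, where $q\bar q\equiv 1\pmod p$ --- a symmetry interchanging the two tunnels can exist only when these presentations agree, i.e. when $q^2\equiv 1\pmod p$, with the sharpening modulo $2p$ when $p$ is even (which in addition records whether the two link components are interchanged). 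Conversely, when the relevant congruence holds such a symmetry can be built from the palindromy of the continued fraction of $q/p$.

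\textbf{Step 3: no further symmetries, and the isomorphism type.} To rule out additional symmetries I would let $\Sym^+(S^3,K(r))$ act on the finite set of isotopy classes of essential $4$-punctured spheres in $S^3-K(r)$ --- equivalently, on the canonical Epstein--Penner ideal decomposition, which for $2$-bridge link complements is the explicit linear fence of Sakuma--Weeks --- whose orientation-preserving combinatorial automorphism group is exactly the group generated by $V$ and the end-reversing involution. The hard part will be the final identification in the two order-$8$ cases: one has to decide whether the extra symmetry commutes with $V$, giving $(\ZZ_2)^3\cong(\ZZ_2)^2\times\ZZ_2$, or conjugates the two tunnel-preserving strong inversions to each other, giving $D_4\cong(\ZZ_2)^2\rtimes\ZZ_2$. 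This is precisely where the parity of $p$ intervenes: when $p$ is even the extra symmetry may sometimes be chosen to interchange the two link components --- hence to lie in the centre --- and this happens exactly when $q^2\equiv 1\pmod{2p}$, whereas $q^2\equiv p+1\pmod{2p}$, together with the knot case $p$ odd, forces the non-abelian extension. I expect this last step --- carried out by tracking, directly on the continued fraction, the permutation induced on the tunnels and on the components --- to be the main obstacle and the real content of the proof, the earlier steps being essentially standard.
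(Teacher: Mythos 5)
Your outline converges, in Step 3, on exactly the route the paper takes: $\Isom^+(S^3-K(r))$ equals the combinatorial automorphism group of the Epstein--Penner canonical decomposition, which for $2$-bridge link complements is the explicit fence $\DD$ of Sakuma--Weeks (this identification is Gu\'eritaud's theorem, which you should cite explicitly --- without it the topological fence is only a candidate), and the answer is then read off from the computation of $\Aut(\DD)$ in Sakuma--Weeks, Theorem II.3.2. The paper's proof is literally this three-citation argument; your Step 1 (extending isometries to the pair $(S^3,K(r))$ via a characterisation of meridional slopes) is not needed for the proposition as stated and is relegated in the paper to a remark.

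The genuine gap is that you stop exactly where the content begins. You assert that the orientation-preserving automorphism group of $\DD$ ``is exactly the group generated by $V$ and the end-reversing involution'' --- that assertion \emph{is} the theorem being used, not a reduction of it --- and you explicitly defer the identification of the group in the two order-$8$ cases. Worse, the heuristic you offer for that identification is not an argument and does not match the facts: the implication ``the extra symmetry interchanges the two components, hence lies in the centre'' is unjustified (swapping components does not force commutation with the Klein four-group), and in \emph{both} even-$p$ order-$8$ cases the extra involution the paper exhibits preserves each component of $K(r)$ --- in the $(\ZZ_2)^3$ case it is an extra strong inversion whose fixed circle meets each component in two points, while in the $D_4$ case its fixed circle meets $K(r)$ in only two points and it reverses orientation on one component. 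The true dichotomy, visible in Lemmas \ref{lem:conti-fract-expansion1} and \ref{lem:conti-fract-expansion2}, is whether the \emph{even} continued fraction $[2b_1,\dots,2b_n]$ is (anti)palindromic or only the positive one $[a_1,\dots,a_n]$ is; equivalently, whether the Farey reflection swapping $\infty$ and $r$ is the reflection in a Farey edge or in a geodesic through the interiors of triangles. Tracking that distinction through to the group structure (e.g.\ checking that in the knot case the end-reversing element has order $4$ and conjugates $h_+$ to $h_-$) is the computation you would have to supply, or else you must simply cite Sakuma--Weeks as the paper does.
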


\begin{proof}
By the work of Epstein and Penner~\cite{Epstein-Penner},
every cusped hyperbolic manifold $M$ of finite volume
admits a canonical decomposition into hyperbolic ideal polyhedra
and the isometry group of $M$ is isomorphic to
the combinatorial automorphism group of the canonical decomposition.
In \cite{Sakuma-Weeks},
a topological candidate, $\DD$, of the canonical decomposition
of the cusped hyperbolic manifold $S^3-K(r)$ was constructed
\cite[Theorem II.2.5]{Sakuma-Weeks}, and 
the combinatorial automorphism group $\Aut(\DD)$ of
$\DD$ was calculated \cite[Theorem II.3.2]{Sakuma-Weeks}.
(As is noted in \cite{Millichap-Worden}, 
there is an error in the proof of Theorem II.3.2,
but this does not affect the conclusion of the theorem.)
On the other hand, it was proved by Gu\'eritaud~\cite{Gueritaud} 
(cf. \cite{ASWY, Gueritaud-Futer})
that $\DD$ is combinatorially equivalent to the canonical decomposition.
Hence $\Isom^+(S^3-K(r))$ is isomorphic to the
orientation-preserving subgroup $\Aut^+(\DD)$ of $\Aut(\DD)$,
which is described in \cite[Theorem II.3.2]{Sakuma-Weeks}.
\end{proof}

\begin{remark}
{\rm 
The action of $\Isom^+(S^3-K(r))$ on $S^3-K(r)$ 
extends to an action on $(S^3,K(r))$, and
this fact implies that $\Isom^+(S^3-K(r))$ is isomorphic to
the orientation-preserving symmetry group,
$\Sym^+(S^3,K(r))$, 
the group of diffeomorphisms of the pair $(S^3,K(r))$,
up to pairwise isotopy,
which preserves the orientation of $S^3$.
(In fact, it is observed by Riley~\cite[p.124]{Riley1979} that this holds
for all hyperbolic knots in $S^3$.)
We note that the symmetry groups of $2$-bridge links 
had been already obtained, as described below.
For $2$-bridge knots, it is reported in \cite{GLM} that
Conway calculated the outer-automorphism groups of their knot groups,
which is isomorphic to the  full symmetry groups of the knots.
Bonahon and Siebenmann
\cite{Bonahon-Siebenmann} calculated the  full symmetry group
of every $2$-bridge link,
by using the uniqueness of 
$2$-bridge decompositions up to isotopy established by Schubert~\cite{Schubert}.
Another calculation is given by \cite[Theorems 4.1 and 6.1]{Sakuma2}
by using the orbifold theorem.
(As is noted in \cite[the paragraph preceding Proposition 12.5]{AOPSY}, 
though there are misprints in the statement \cite[Theorem 4.1]{Sakuma2},
the correct formula can be found in the tables in 
\cite[p.184]{Sakuma2}.)
}
\end{remark}

We now visualise the action of 
$\Isom^+(S^3-K(r))\cong \Sym^+(S^3,K(r))$ on $(S^3,K(r))$
and classify the strong inversions of $K(r)$,
up to strong equivalence, 
generalising and refining the result for the knot case
given by \cite[Proposition 3.6]{Sakuma1}.
To this end, note that, by virtue of Schubert's classification of $2$-bridge links 
(cf. \cite[Chapter 12]{BZH}),
we may assume that the slope $r=q/p$ of the hyperbolic $2$-bridge link $K(r)$
satisfies the inequality $0<q\le p/2$
and the condition that one of $p$ and $q$ is even.
The following lemma is well-known and can be proved 
by an argument similar to that in \cite[Lemma 2]{Hartley-Kawauchi}.

\begin{lemma}
\label{lem:conti-fract-expansion1}
Let $p$ and $q$ be relatively prime integers
such that $0<q\le p/2$
and that one of $p$ and $q$ is even.
Then $r=q/p$ has a unique 
continued fraction expansion 
\begin{center}\begin{picture}(230,70)
\put(0,48){$\displaystyle{
r=[2b_1,2b_2, \cdots,2b_{n}] =
\cfrac{1}{2b_1+
\cfrac{1}{ \raisebox{-5pt}[0pt][0pt]{$2b_2 \, + \, $}
\raisebox{-10pt}[0pt][0pt]{$\, \ddots \ $}
\raisebox{-12pt}[0pt][0pt]{$+ \, \cfrac{1}{2b_{n}}$}
}} \ ,}$}
\end{picture}\end{center}
where $b_i$ is a non-zero integer $(1\le i\le n)$.
The length $n$ is even or odd
according to whether $p$ is odd or even,
i.e., $K(r)$ is a knot or a two-component link.
Moreover the following hold.
\begin{enumerate}
\item
Suppose $p$ is odd, i.e., $n$ is even.
Then $q^2\equiv 1 \pmod{p}$ if and only if $b_i=-b_{n+1-i}$ ($1\le i\le n$).
\item
Suppose $p$ is even, i.e., $n$ is odd.
Then $q^2\equiv 1 \pmod{2p}$
if and only if $b_i=b_{n+1-i}$ ($1\le i\le n$).
\end{enumerate}
\end{lemma}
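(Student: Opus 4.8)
The plan is to proceed in three stages: establish existence and uniqueness of the even expansion by a variant of the Euclidean algorithm; read off the parity of its length; and deduce the two (anti-)palindrome characterisations, which carry the real content. Set $q_{-1}=p$, $q_0=q$, and inductively, as long as $q_{i-1}\neq 0$, put $q_i:=q_{i-2}-2b_iq_{i-1}$ where $2b_i$ is chosen so that $|q_i|<|q_{i-1}|$; stop at the first index $n$ with $q_n=0$. Because $q_i\equiv q_{i-2}\pmod 2$, the hypothesis that exactly one of $p,q$ is even propagates to: exactly one of $q_{i-1},q_i$ is even, for every $i$. Consequently the ratio $q_{i-2}/q_{i-1}$ is never an odd integer (if it were, then $\gcd(q_{i-2},q_{i-1})=1$ would force $q_{i-1}=\pm1$, hence $q_{i-2}$ odd, contradicting that $q_{i-1}$ is then odd), so the open interval of length $2$ centred at $q_{i-2}/q_{i-1}$ contains a unique even integer; this is $2b_i$, so each step is unambiguous and the expansion, once it exists, is unique. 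Termination follows from the strict decrease of $|q_i|$, with $q_{n-1}=\pm1$ (as $q_{n-1}\mid q_{n-2}$ and $\gcd(q_{n-1},q_{n-2})=\gcd(p,q)=1$); moreover $b_i\neq0$ because $|q_i|<|q_{i-1}|<|q_{i-2}|$ precludes $q_i=q_{i-2}$; and unwinding the recursion gives $q/p=[2b_1,\dots,2b_n]$. Finally, the parity of $q_i$ depends only on $i\bmod2$ — that of $q$ for even $i$, that of $p$ for odd $i$ — and since $q_{n-1}=\pm1$ is odd, comparing the parity of $n-1$ with $0$ and $-1$ shows $n$ is even if and only if $p$ is odd.

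For the palindrome conditions I would pass to continuant matrices: set $\langle a_1,\dots,a_k\rangle:=\prod_{j=1}^{k}\left(\begin{smallmatrix}a_j&1\\1&0\end{smallmatrix}\right)$, which has determinant $(-1)^k$, whose first column $(A,C)^{\mathsf T}$ satisfies $C/A=[a_1,\dots,a_k]$ (so $\gcd(A,C)=1$), which reverses under transposition ($\langle a_k,\dots,a_1\rangle=\langle a_1,\dots,a_k\rangle^{\mathsf T}$, each factor being symmetric), and which negates all entries via $\langle -a_1,\dots,-a_k\rangle=(-1)^kN\langle a_1,\dots,a_k\rangle N$ with $N=\mathrm{diag}(1,-1)$. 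Write $M:=\langle 2b_1,\dots,2b_n\rangle=\left(\begin{smallmatrix}A&B\\C&D\end{smallmatrix}\right)$, so $(A,C)=\pm(p,q)$. When $n$ is even, $b_i=-b_{n+1-i}$ for all $i$ says that $(2b_1,\dots,2b_n)$ equals its reverse-and-negate, which by the two transformation rules translates to $M=NM^{\mathsf T}N$, i.e. $B=-C$; since $AD-BC=1$ and $p\mid A$, this is equivalent to $q^2=C^2\equiv 1\pmod p$. When $n$ is odd, $b_i=b_{n+1-i}$ translates to $M=M^{\mathsf T}$, i.e. $B=C$; here $M\equiv\left(\begin{smallmatrix}0&1\\1&0\end{smallmatrix}\right)\pmod2$ forces $B,C$ odd and $D$ even, so $AD$ is divisible by $2p$ and $AD-BC=-1$ gives $BC\equiv1\pmod{2p}$, whence $B=C$ is equivalent to $q^2=C^2\equiv1\pmod{2p}$. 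In both cases, for the converse one cancels $C$ (which is prime to $p$, resp. to $2p$) from the congruence to get $B\equiv\mp C$, and then observes that every value $[2b_n,\dots,2b_1]=B/A$ of an even expansion has absolute value $<1$, so the congruence forces the literal equality $B=\mp C$; uniqueness from the first stage then turns this back into the stated relation among the $b_i$.

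I expect the bookkeeping in the second stage to be the main obstacle: pinning the signs in the continuant identities so that the target congruence reads $q^2\equiv+1$ rather than $q^2\equiv\pm1$, and — in the two-component case — justifying the sharpening from a congruence modulo $p$ to one modulo $2p$. The latter is exactly where the hypothesis that precisely one of $p,q$ is even is used: it makes the mod-$2$ reduction of $M$ the permutation matrix, hence $D$ even, hence $AD\equiv0\pmod{2p}$. This is the counterpart, in our setting, of the computation in \cite[Lemma~2]{Hartley-Kawauchi}; the rest is routine continued-fraction manipulation.
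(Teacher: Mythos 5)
The paper itself does not prove this lemma: it only remarks that the statement is well known and can be established by an argument similar to \cite[Lemma 2]{Hartley-Kawauchi}. So your proposal supplies a proof where the text supplies a citation, and your route is a reasonable one. The first stage (division algorithm with even quotients, strict decrease of $|q_i|$, the parity of $q_i$ depending only on $i\bmod 2$, hence the parity of $n$) is correct, and the continuant identities you invoke all check out: $\det M=(-1)^n$, $(A,C)=\pm(p,q)$, reversal is transposition, negation of all partial quotients is conjugation by $\mathrm{diag}(1,-1)$ up to the sign $(-1)^n$, and $|B/A|=|[2b_n,\dots,2b_1]|<1$ by induction using $|2b_i|\ge 2$ (this last bound is also what makes the uniqueness in stage one airtight, so it deserves to be stated there rather than only in stage two). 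The forward implications of both palindrome criteria and the entire converse in the odd-$n$ case are complete: there the congruence $B\equiv C$ holds modulo $2p$ while $|B-C|<p+q\le 3p/2<2p$, so the size estimate really does force $B=C$.

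The one step that does not work as written is the converse in the even-$n$ case. From $q^2\equiv 1\pmod p$ you obtain only $B\equiv -C\pmod p$, and the available bounds $|B|<|A|=p$ and $|C|=q\le p/2$ give $|B+C|<3p/2$, which is compatible with $B+C=\pm p$ as well as with $B+C=0$; for $p\ge 3$ nothing in the size estimate alone excludes the former, so ``the congruence forces the literal equality'' is not yet justified. The repair is exactly the mod-$2$ observation you already use in the odd case: when $n$ is even, $M$ reduces modulo $2$ to the identity matrix, so $B$ and $C$ are both even while $p=|A|$ is odd; hence $B+C$ is even and cannot equal $\pm p$, forcing $B+C=0$ and thus $B=-C$. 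With that one line added, your argument is complete and self-contained.
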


\begin{figure}
	\centering
	\includegraphics[width=9cm]{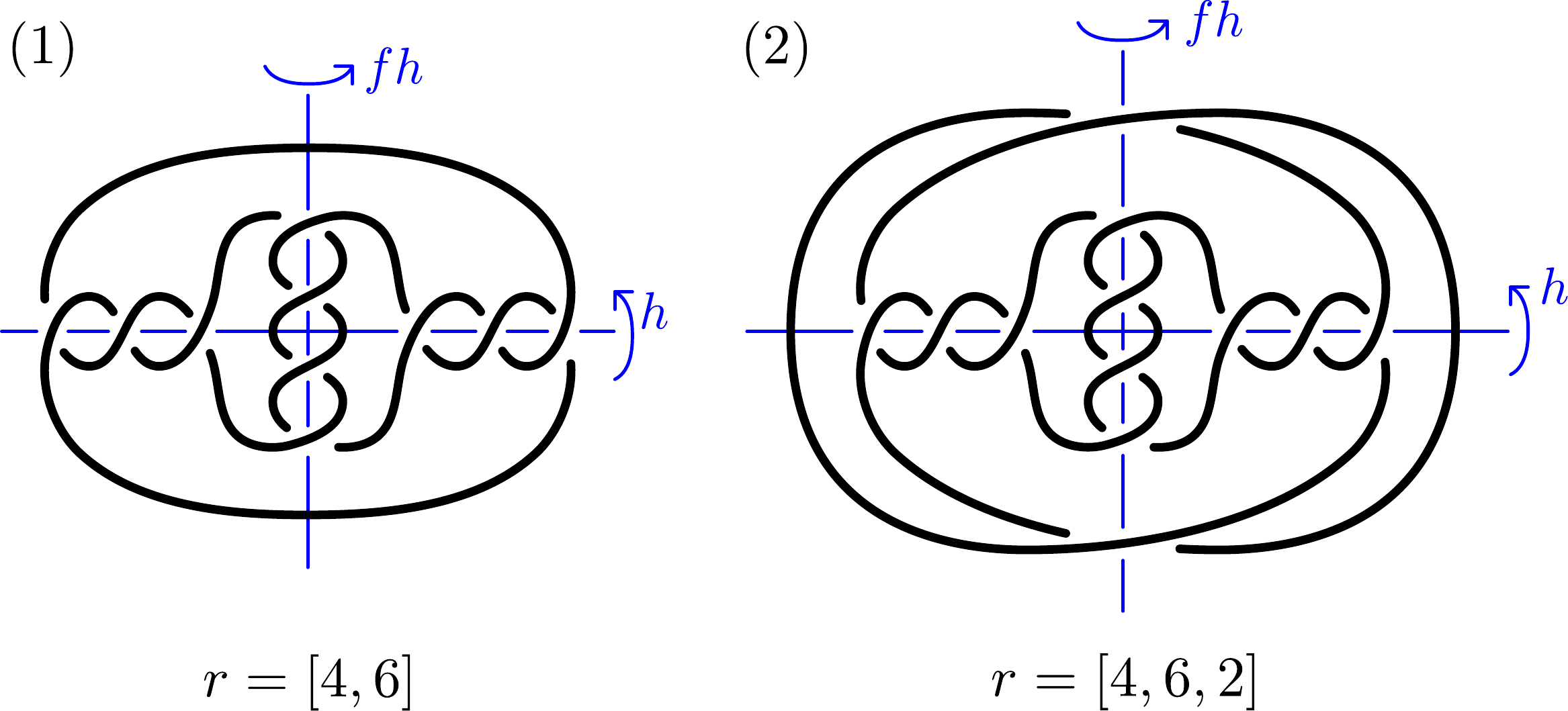}
	\caption{The action of the characteristic subgroup $(\ZZ_2)^2<\Sym^+(S^3,K(r))$
	for the knot case (1) and for the two component link case (2). In both cases, $f$
	is the $\pi$-rotation about the axis which intersects the projection plane
	perpendicularly at the central point (the intersection of the axes of $h$ and $fh$).}
	\label{Z2+Z2-symmetry}
\end{figure}

By using Lemma \ref{lem:conti-fract-expansion1},
we see that every $2$-bridge link $K(r)$ admits a $(\ZZ_2)^2$-action
generated by the two involutions $f$ and $h$
illustrated in Figure \ref{Z2+Z2-symmetry}(1) or (2)
according to whether $K(r)$ is a knot or a $2$-component link.
If $K(r)$ is hyperbolic, 
then the $(\ZZ_2)^2$-action projects faithfully onto  
the characteristic subgroup $(\ZZ_2)^2$ of
$\Sym^+(S^3,K(r))\cong\Isom^+(S^3-K(r))$.
(This can be seen either by appealing to the result of Borel
(see \cite{Conner-Raymond})
that a finite group action on an aspherical manifold $M$ with
centerless fundamental group projects injectively into $\Out(\pi_1(M))$, 
or by looking at the action on the canonical decomposition $\DD$
of $S^3-K(r)$.)
Moreover, any two such $(\ZZ_2)^2$-actions on $(S^3,K(r))$
are conjugate to each other,
because
(a) the restriction of any such $(\ZZ_2)^2$-action
to $E(K(r))$ is conjugate to the restriction of the action of 
the characteristic subgroup $(\ZZ_2)^2<\Isom^+(S^3-K(r))$
by virtue of the orbifold theorem, and because
(b) the restriction of any such $(\ZZ_2)^2$-action
to $N(K(r))$ is determined by its restriction to 
$\partial N(K(r))=\partial E(K(r))$.
Thus we obtain the following classification of the strong inversions
of $K(r)$, up to strong equivalence, 
whose image in $\Sym^+(S^3,K(r))$ is contained in 
the characteristic subgroup $(\ZZ_2)^2$.

\begin{enumerate}
\item
Suppose $K(r)$ is a hyperbolic $2$-bridge knot.
Then the involutions $h_+:=h$ and $h_-:=fh$ 
in Figure \ref{Z2+Z2-symmetry}(1)
are the only strong inversions of $K(r)$
which projects to an element of 
the characteristic subgroup $(\ZZ_2)^2<\Sym^+(S^3,K(r))$.
$\Fix(h_+)$ consists of two tunnels, one of which is 
the upper tunnel $\tau_+$.
We denote the other tunnel by $\tau_+^c$, and 
call the meridian pair of $\pi_1(E(K(r)))\cong G(r)$ represented by $\tau_+^c$
the {\it long upper meridian pair} of $G(r)$.
The tunnel $\tau_-^c$ and the {\it long lower meridian pair} of $G(r)$
are defined similarly.
\item
Suppose $K(r)$ is a two-component hyperbolic $2$-bridge link.
Then the involution $h$ illustrated in Figure \ref{Z2+Z2-symmetry}(2)
is the unique strong inversion of $K(r)$
which projects to an element of the characteristic subgroup
$(\ZZ_2)^2<\Sym^+(S^3,K(r))$.
$\Fix(h)$ consists of four tunnels, two of which
are $\tau_+$ and $\tau_-$.
We denote the remaining tunnels by $\tau_L$ and $\tau_R$,
and call the meridian pair of $G(r)$ represented by 
one of the two tunnels
an {\it intermediate meridian pair} of $G(r)$.
It should be noted that the two intermediate meridian pairs are 
equivalent modulo the automorphism $f_*$ of $G(r)$
induced by the involution $f$ in Figure \ref{Z2+Z2-symmetry}(2), 
because the two additional tunnels are mapped to each other by $f$.
\end{enumerate}

\begin{figure}
	\centering
	\includegraphics[width=10cm]{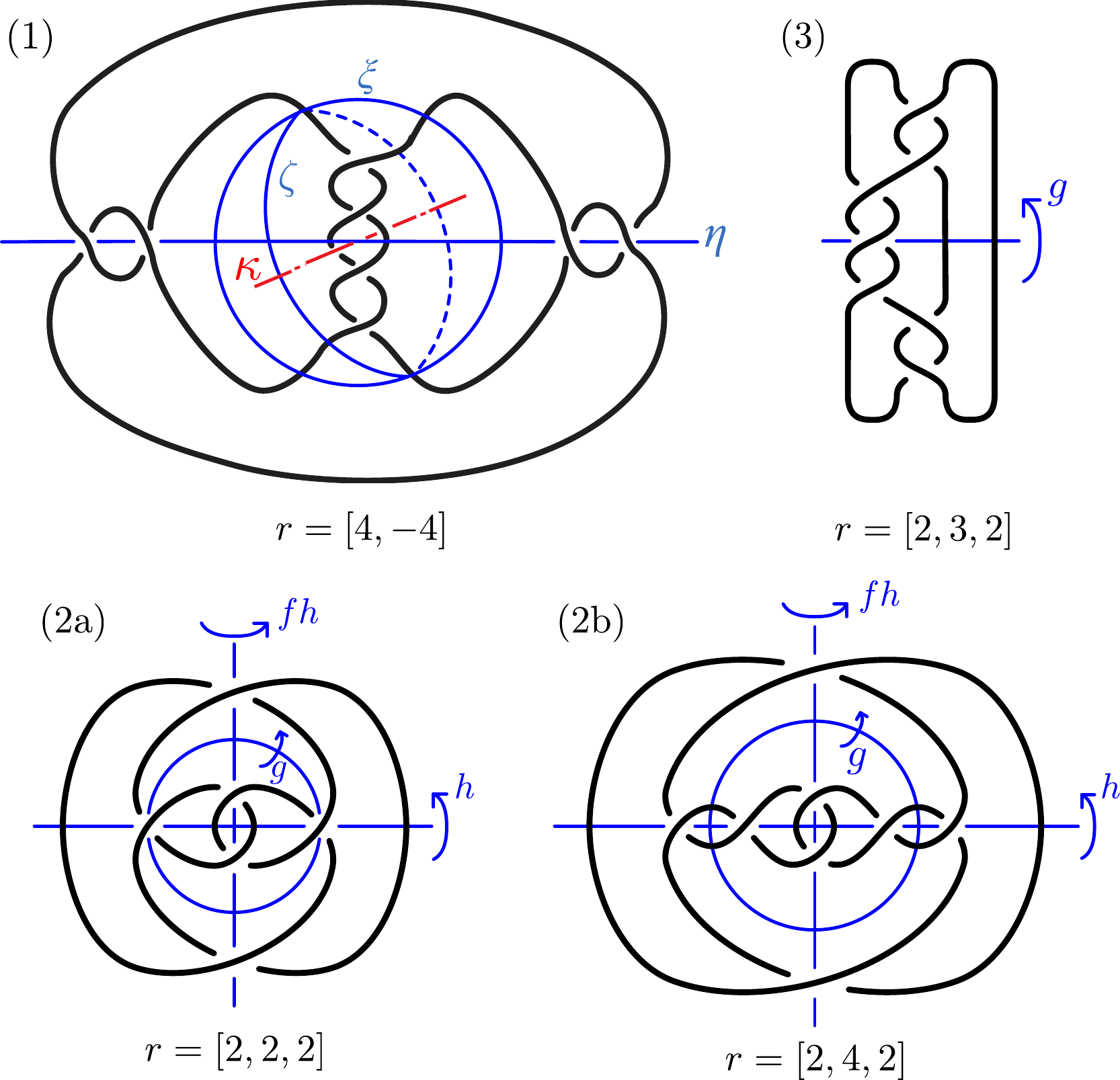}
\caption
{Additional symmetry of $K(r)$. 
(1) $p: \mbox{odd}$, $q^2\equiv 1 \pmod{p}$.	
Then $\Sym^+(S^3,K(r)) \cong \langle g, h \svert g^4, h^2, (gh)^2\rangle$,
where 
$g=\mbox{($\pi/2$-rotation about $\kappa$)} \circ \mbox{($\pi$-rotation about $\xi$)}$,
$h=\mbox{($\pi$-rotation around $\eta$)}$, and
$gh=\mbox{($\pi$-rotation around $\zeta$)}$.
In this case, $g^ih$ ($i=1,3$) are the extra strong inversions.
(2a) $p: \mbox{even}$, $q^2\equiv 1 \pmod{2p}$, $b_{n/2}: \mbox{odd}$.
In this case, $gh$ is the extra strong inversion.
(2b) $p: \mbox{even}$, $q^2\equiv 1 \pmod{2p}$, $b_{n/2}: \mbox{even}$.
In this case, $g$ is the extra strong inversion.
(3)  $p$: even, $q^2\equiv p+1 \pmod{2p}$.
In this case, there are no extra strong inversions.
}
\label{extra-symmetry}
\end{figure}

If $q^2\not\equiv 1 \pmod{p}$,
then $\Isom^+(S^3-K(r))\cong (\ZZ_2)^2$ by Proposition \ref{prop:isometry-group}, and so
the above $(\ZZ_2)^2$-action projects faithfully onto
the full group $\Isom^+(S^3-K(r))$.
Thus the list of strong inversions of $K(r)$ in the above
give all strong inversions of $K(r)$.
Hence, by Proposition \ref{prop:strong-involution_1}, 
the lists of meridian pairs in the above
give all possible parabolic generating pairs of $G(r)$.

If $q^2\equiv 1 \pmod{p}$,
then $\Isom^+(S^3-K(r))$ is a $\ZZ_2$-extension of the 
characteristic subgroup $(\ZZ_2)^2$,
and $(S^3,K(r))$ admits extra symmetries,
which interchange $\tau_+$ and $\tau_-$.

\begin{enumerate}
\item
Suppose $K(r)$ is a hyperbolic $2$-bridge knot (i.e., $p$ is odd) and $q^2\equiv 1 \pmod{p}$.
Then $n$ is even and $b_i=-b_{n+1-i}$ ($1\le i\le n$) by Lemma \ref{lem:conti-fract-expansion1}(1).
Thus $(S^3,K(r))$ admits an extra symmetry $g$ 
illustrated in Figure \ref{extra-symmetry}(1).
Note that $g$ and $h$ defined in the caption generate the oder $8$ dihedral group 
$D_4=\langle g, h \svert g^4, h^2, hgh^{-1}=g^{-1}\rangle$
which acts faithfully on $(S^3,K(r))$.
As in the preceding arguments,
we see that the $D_4$-action projects faithfully onto 
the whole group $\Sym^+(S^3,K(r))$ 
and that any such $D_4$-action on $\Sym^+(S^3,K(r))$
is conjugate to the $D_4$-action in Figure \ref{extra-symmetry}(1).
Hence the involutions $g^ih$ ($0\le i\le 3$) are 
the only strong inversions of $K(r)$
(cf. \cite[Section 3]{Sakuma1}).
Thus, in addition to $h_+=h$ and $h_-=g^2h$,
we have two extra strong inversions $gh$ and $g^3h$,
and each of them determines two tunnels for $K(r)$.
We call the meridian pairs of $G(r)$ determined by such an extra tunnel
an {\it extra meridian pair}  of $G(r)$.
The four extra meridian pairs are divided into two classes, 
where each class consists of two extra meridian pairs
which are equivalent modulo the  
automorphism $g_*$ of $G(r)$ induced by $g$,
because $gh$ and $g^3h=g(gh)g^{-1}$ are conjugate in $D_4$.

\item
Suppose $K(r)$ is a two-component hyperbolic $2$-bridge link  (i.e., $p$ is even) 
and $q^2\equiv 1 \pmod{2p}$.
Then $n$ is odd and $b_i=b_{n+1-i}$ ($1\le i\le n$) by 
Lemma \ref{lem:conti-fract-expansion1}(2).
Thus $(S^3,K(r))$ admits a $(\ZZ_2)^3$-action 
generated by three involutions $f$, $h$ and $g$,
as shown in Figure \ref{extra-symmetry}(2a) and (2b)
according to whether $b_{n/2}$ is odd or even.
Then 
$h':=gh$ or $h':=g$ is a strong inversion of $K(r)$ accordingly,
and this is the only extra strong inversion of $K(r)$.
We call the meridian pair represented by one of the four arc components
of $\Fix(h')\cap E(K(r))$ an {\it extra meridian pair}.
The four extra meridian pairs are divided into two classes, 
where each class consists of two extra meridian pairs which are 
equivalent modulo the  
automorphism $f_*$ of $G(r)$ induced by $f$.

\item
Suppose $K(r)$ has two components and $q^2\equiv p+1 \pmod{2p}$.
In this case, it is not easy to draw a link diagram
in which one can see the whole symmetry.
(A simple conceptual way to understand the whole symmetry is to 
use the decomposition of $(S^3,K(r))$ into 
two rational tangles, as described in Section \ref{sec:extra-meridian-pair}).
However, it is easy to visualise a single additional symmetry,
as described below.
By \cite[Lemma II.3.3]{Sakuma-Weeks}
(cf. Lemma \ref{lem:conti-fract-expansion2}(1) 
in Section \ref{sec:extra-meridian-pair}),
the continued fraction expansion of $r=q/p$ 
consisting of positive integers is symmetric,
and $(S^3,K(r))$ admits an extra symmetry $g$ which interchanges $\tau_+$ and $\tau_-$,
as shown in Figure \ref{extra-symmetry}(3).
The extra involution $g$ preserves each of the two components of $K(r)$,
acts on one component preserving orientation and on the other component
reversing orientation.
Since $\Sym^+(S^3,K(r))\cong D_4$ is generated by 
the characteristic subgroup $(\ZZ_2)^2$
and the extra element $g$,
we see that $K(r)$ does not have an extra strong inversion.
\end{enumerate}

\medskip
By the above arguments,
we obtain the following propositions
(cf. \cite[Corollary 2.2]{Lee-Sakuma_2016}).

\begin{proposition}
\label{prop:strong-invertion_1}
For a hyperbolic $2$-bridge knot $K(r)$ with $r=q/p$ 
($p$:odd, $q\not\equiv \pm1\pmod{p}$),
the following hold.

{\rm (1)}
Suppose $q^2\not\equiv 1\pmod p$.
Then any parabolic generating pair of $G(r)$
is equivalent to the upper, lower,
long upper or long lower meridian pair.

{\rm (2)}
Suppose $q^2\equiv 1\pmod p$.
Then any parabolic generating pair of $G(r)$
is equivalent to the upper, lower,
long upper, long lower meridian pair
or one of the four extra 
meridian pairs.
Moreover, the four extra 
meridian pairs are divided into two classes
up to automorphisms of $G(r)$.
\end{proposition}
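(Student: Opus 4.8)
The plan is to assemble the proposition directly from the structural analysis of $\Sym^+(S^3,K(r))\cong\Isom^+(S^3-K(r))$ carried out in the preceding discussion, combined with Proposition \ref{prop:strong-involution_1}. First I would invoke Proposition \ref{prop:strong-involution_1}: any parabolic generating pair $\{\alpha,\beta\}$ of $G(r)$ is a meridian pair represented by a tunnel contained in $\Fix(h)$ for some strong inversion $h$ of $K(r)$. Hence it suffices to enumerate, up to strong equivalence, all strong inversions of $K(r)$ and, for each, the meridian pairs determined by the tunnel components of its fixed-point set. Since $p$ is odd, $K(r)$ is a knot, so I am in the knot case of Figure \ref{Z2+Z2-symmetry}(1) and Figure \ref{extra-symmetry}(1).

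For part (1), assume $q^2\not\equiv 1\pmod p$. By Proposition \ref{prop:isometry-group}, $\Isom^+(S^3-K(r))\cong(\ZZ_2)^2$, and by the discussion following Figure \ref{Z2+Z2-symmetry} this group is realised faithfully by the $(\ZZ_2)^2$-action generated by $f$ and $h$, with every such action conjugate (via the orbifold theorem applied to $E(K(r))$ together with the determination of the action on $N(K(r))$ by its restriction to the boundary). The only strong inversions in this group are $h_+=h$ and $h_-=fh$; $\Fix(h_+)$ consists of the two tunnels $\tau_+$ and $\tau_+^c$, and $\Fix(h_-)$ of $\tau_-$ and $\tau_-^c$. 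The associated meridian pairs are exactly the upper, long upper, lower, and long lower meridian pairs, which gives the claim.

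For part (2), assume $q^2\equiv 1\pmod p$. By Lemma \ref{lem:conti-fract-expansion1}(1) the continued fraction expansion satisfies $b_i=-b_{n+1-i}$, so $(S^3,K(r))$ carries the $D_4$-action generated by $g$ and $h$ of Figure \ref{extra-symmetry}(1); by Proposition \ref{prop:isometry-group} this $D_4$ is all of $\Sym^+(S^3,K(r))$, and by the same orbifold-theorem-plus-boundary argument any such $D_4$-action is conjugate to the pictured one. The strong inversions are precisely the four involutions $g^ih$ ($0\le i\le 3$): $h_+=h$ and $h_-=g^2h$ give the upper/long upper and lower/long lower pairs as before, while the extra strong inversions $gh$ and $g^3h$ each contribute two tunnels, yielding the four extra meridian pairs. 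Since $g^3h=g(gh)g^{-1}$, conjugation by $g$ identifies the extra meridian pairs coming from $gh$ with those from $g^3h$, so the four extra pairs fall into two classes modulo $g_*\in\Aut(G(r))$. This completes the proof.

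The main obstacle is not in the enumeration itself but in justifying that the \emph{explicitly pictured} symmetry group is the \emph{whole} isometry group and is realised by a \emph{unique} (up to conjugacy) action — i.e. that no strong inversion has been missed and that two actions with the same abstract group are genuinely conjugate. This is handled by the combination of Proposition \ref{prop:isometry-group} (which pins down the abstract group and the characteristic $(\ZZ_2)^2$ preserving both tunnels) with the orbifold theorem for the rigidity of the $E(K(r))$-action and the elementary observation that the action on $N(K(r))$ is forced by its restriction to $\partial N(K(r))$; once these are in place, the rest is bookkeeping of fixed-point sets already performed in the text preceding the proposition.
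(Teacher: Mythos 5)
Your proposal is correct and follows essentially the same route as the paper: the paper derives this proposition directly from the preceding discussion in Section~\ref{sec:two-bridge-link-group}, namely Proposition~\ref{prop:strong-involution_1} combined with the determination of $\Isom^+(S^3-K(r))$ in Proposition~\ref{prop:isometry-group}, the rigidity of the $(\ZZ_2)^2$- and $D_4$-actions via the orbifold theorem, and the enumeration of tunnels in the fixed-point sets of the strong inversions $h_+$, $h_-$, $gh$, $g^3h$. Your identification of the two classes of extra meridian pairs via $g^3h=g(gh)g^{-1}$ matches the paper's argument exactly.
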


\begin{proposition}
\label{prop:strong-invertion_2}
For a hyperbolic $2$-component $2$-bridge link $K(r)$ 
with $r=q/p$ 
($p$:even, $q\not\equiv \pm1\pmod{p}$),
the following hold.

{\rm (1)}
Suppose $q^2\not\equiv 1\pmod{2p}$.
Then any parabolic generating pair of $G(r)$
is equivalent to the upper or lower meridian pair,
or one of the two intermediate meridian pairs.
Moreover, the two intermediate meridian pairs are equivalent
up to automorphisms of $G(r)$. 

{\rm (2)}
Suppose $q^2\equiv 1\pmod{2p}$.
Then any parabolic generating pair of $G(r)$
is equivalent to the upper or lower meridian pair,
one of the two intermediate meridian pairs, or 
one of the four extra meridian pairs.
Moreover, the two intermediate meridian pairs
are equivalent 
up to automorphisms of $G(r)$,
and the four extra meridian pairs
are divided into two classes
up to automorphisms of $G(r)$.
\end{proposition}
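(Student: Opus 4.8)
The plan is to read off the proposition from Proposition~\ref{prop:strong-involution_1} together with the classification of strong inversions of $K(r)$ established in the preceding discussion, organised according to the structure of $\Sym^+(S^3,K(r))\cong\Isom^+(S^3-K(r))$ supplied by Proposition~\ref{prop:isometry-group}. First I would record the reduction: by Proposition~\ref{prop:strong-involution_1}, every parabolic generating pair $\{\alpha,\beta\}$ of $G(r)$ is the meridian pair determined by a tunnel of $K(r)$ contained in $\Fix(h)$ for some strong inversion $h$; hence it suffices to enumerate, up to strong equivalence, the strong inversions of $K(r)$, to list for each the meridian pairs determined by its tunnels, and to track the identifications on $G(r)$ induced by the symmetry group. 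The $(\ZZ_2)^2$-action generated by $f$ and $h$ in Figure~\ref{Z2+Z2-symmetry}(2) projects faithfully onto the characteristic subgroup $(\ZZ_2)^2<\Sym^+(S^3,K(r))$ and is unique up to conjugacy; $h$ is its only strong inversion; and $\Fix(h)$ consists of the four tunnels $\tau_+,\tau_-,\tau_L,\tau_R$, which yield the upper, lower and the two intermediate meridian pairs, with $f$ interchanging $\tau_L$ and $\tau_R$ so that $f_*$ identifies the two intermediate pairs.

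For part~(1), the hypothesis $q^2\not\equiv 1\pmod{2p}$ means, via Proposition~\ref{prop:isometry-group}, that $\Sym^+(S^3,K(r))$ is $(\ZZ_2)^2$ when $q^2\not\equiv 1\pmod{p}$ and $D_4$ when $q^2\equiv p+1\pmod{2p}$. In the first case the characteristic subgroup is the whole group, so the four meridian pairs above are all the possibilities. In the second case the extra coset $g\,(\ZZ_2)^2$, with $g$ as in Figure~\ref{extra-symmetry}(3), contains no strong inversion, since $g$ preserves each component of $K(r)$ while reversing the orientation of exactly one; hence $h$ is still the only strong inversion and the list is unchanged.

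For part~(2), the hypothesis $q^2\equiv 1\pmod{2p}$ gives $\Sym^+(S^3,K(r))\cong(\ZZ_2)^3=(\ZZ_2)^2\times\ZZ_2$, realised by the action generated by $f$, $h$, and the extra involution of Figure~\ref{extra-symmetry}(2a)/(2b) (the case being determined by the parity of the middle continued-fraction coefficient supplied by Lemma~\ref{lem:conti-fract-expansion1}(2)). One checks, as in the text, that there is a unique extra strong inversion $h'$, that $\Fix(h')\cap E(K(r))$ has four arc components, and that these determine the four extra meridian pairs; since $f_*$ permutes the four arcs in two orbits of size two, these fall into two classes up to automorphisms of $G(r)$. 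Adjoining the upper, lower and the two ($f_*$-equivalent) intermediate meridian pairs produces the asserted list.

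The main obstacle I anticipate is the sub-case $q^2\equiv p+1\pmod{2p}$ of part~(1): one must verify that none of the four symmetries outside the characteristic $(\ZZ_2)^2$ is a strong inversion, which is not apparent from a planar diagram and instead rests on the orientation behaviour of the extra symmetry on the two components of $K(r)$; the clean way to see this is through the decomposition of $(S^3,K(r))$ into two rational tangles used in Section~\ref{sec:extra-meridian-pair}. Once the symmetry group and its geometric realisation are fixed by Proposition~\ref{prop:isometry-group} and the orbifold theorem, the remaining work --- counting the tunnels in each $\Fix(h)$ and recording the $f_*$-identifications --- is routine bookkeeping.
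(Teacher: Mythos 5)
Your proposal is correct and follows essentially the same route as the paper: reduce via Proposition \ref{prop:strong-involution_1} to classifying strong inversions, use Proposition \ref{prop:isometry-group} to split into the cases $(\ZZ_2)^2$, $D_4$, and $(\ZZ_2)^3$, count the tunnels in each fixed-point set, and record the identifications induced by $f_*$. Even your treatment of the delicate subcase $q^2\equiv p+1\pmod{2p}$ --- ruling out extra strong inversions via the orientation behaviour of $g$ on the two components, with the rational-tangle decomposition as the conceptual justification --- matches the paper's argument.
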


By Propositions \ref{prop:strong-invertion_1} 
and \ref{prop:strong-invertion_2},
the proof of the assertion in Theorem \ref{main-theorem2}(1)
that each hyperbolic $2$-bridge link group $G(r)$
admits at most two parabolic generating pairs
(i.e., the upper/lower meridian pairs)
is reduced to the proof of the fact that
none of the long upper/lower meridian pairs,
intermediate meridian pairs,
and the extra meridian pairs can
generate the hyperbolic $2$-bridge link group $G(r)$.
The next two sections are devoted to the proof of this fact.

At the end of this section, 
we prove the following proposition,
which, together with the above, completes the proof of Theorem \ref{main-theorem2}(1). 

\begin{proposition}
\label{prop:upper-lower-difference}
For each hyperbolic $2$-bridge link group $G(r)$,
the upper meridian pair and the lower meridian pair are not equivalent.
\end{proposition}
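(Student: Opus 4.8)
The plan is to distinguish the upper and lower meridian pairs by a homological invariant that is computable from the continued fraction expansion of $r$. The key observation is that, although both the upper tunnel $\tau_+$ and the lower tunnel $\tau_-$ determine unknotting tunnels and hence genus-$2$ handlebody complements, the two meridian pairs sit inside $G(r)$ in ways that are detected by the double branched covering $M_2(r)$ of $(S^3,K(r))$. First I would recall that a meridian pair $\{\alpha,\beta\}$ determines, via the inverting elliptic element $h$ of Proposition \ref{prop:strong-involution_1}, a strong inversion of $K(r)$ whose fixed-point circle contains the associated tunnel; taking the quotient orbifold and then its double branched cover along the branch locus, one obtains a closed (or cusped) $3$-manifold together with a distinguished framing, and the first homology of this object — or more precisely a suitable linking-form or torsion invariant extracted from it — is an invariant of the equivalence class of the meridian pair.

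The core computation, which I expect to be entirely routine once set up, is that $H_1$ of the double branched cover of $S^3$ along $K(r)$ is the cyclic group $\ZZ_p$ (this is classical for $2$-bridge links), and that the upper and lower meridian pairs induce the two \emph{a priori distinct} identifications of the peripheral data with $\ZZ_p$ corresponding to the slopes $q/p$ and $q'/p$, where $qq'\equiv 1\pmod p$. Concretely: the upper tunnel records the generator class $q$, the lower tunnel records $q^{-1}\bmod p$, and these two classes determine the same equivalence class of meridian pair if and only if $q\equiv \pm q^{-1}\pmod p$, i.e. $q^2\equiv \pm 1\pmod p$. So the argument as stated only proves non-equivalence when $q^2\not\equiv\pm1\pmod p$; to finish, I would need to combine the homological invariant with the finer symmetry analysis of Section \ref{sec:two-bridge-link-group}. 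In the cases $q^2\equiv 1$ or $q^2\equiv p+1\pmod{2p}$ there is an extra symmetry $g$ interchanging $\tau_+$ and $\tau_-$, and one must rule out that this symmetry is realised by an \emph{equivalence} of meridian pairs rather than merely a homeomorphism of $(S^3,K(r))$.

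The main obstacle, therefore, is precisely the extra-symmetric cases. The point is that the element $g$ of $\Sym^+(S^3,K(r))$ that swaps the upper and lower tunnels acts on $G(r)$ by an automorphism $g_*$, and $g_*$ does carry the upper meridian pair to the lower meridian pair; what must be shown is that $g_*$ is \emph{not} conjugation composed with an inversion $\alpha\mapsto\alpha^{\pm1}$, i.e. that $g_*$ is an outer automorphism not of the special form allowed in the definition of equivalence. I would handle this by examining the action of $g_*$ on $H_1(E(K(r)))\cong\ZZ^{\mu}$ (where $\mu\in\{1,2\}$): for a knot, $H_1\cong\ZZ$ is generated by the meridian, every automorphism acts as $\pm1$ on it, so homology of $E(K(r))$ alone is useless and one must pass to $H_1$ of a finite cover — the double branched cover again, or the double cyclic cover — on which $g_*$ acts by an involution whose eigenvalue pattern distinguishes it from the inversion. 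For the two-component case, $g$ preserves each component but reverses one orientation, which already pins down its action on $H_1(E(K(r)))\cong\ZZ^2$ and can be compared directly with the action forced by an equivalence. The cleanest uniform route is probably: define the invariant to be the pair (isomorphism type of $H_1$ of the double branched cover, together with the image in it of the two meridians of the pair, taken as an unordered pair of elements up to sign and up to the automorphisms of $\ZZ_p$ induced by $\Isom^+$), and check case by case against Lemma \ref{lem:conti-fract-expansion1} that the upper and lower data differ. I expect the bookkeeping across the three symmetry regimes of Proposition \ref{prop:isometry-group} to be the real content; everything else is standard $2$-bridge homology.
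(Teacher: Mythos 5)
Your overall strategy --- comparing the classes $\omega(m_1,m_2)=m_1m_2$ of the two pairs inside $H_1(M(K(r)))\cong\ZZ_p$ --- is exactly the tool the paper develops in Lemma \ref{lem:invariant-omega}, but your computation of the two classes is inconsistent, and this opens a gap that the rest of your argument does not close. By Lemma \ref{lem:invariant-omega}(1), equivalence of meridian pairs forces the two classes to agree \emph{up to sign as specific elements of} $H_1(M(K(r)))$: conjugation in $\Piorb(r)\cong D_p$ acts on the cyclic subgroup only by $\pm1$, and there is no further quotient by automorphisms of $\ZZ_p$ --- in particular none by the automorphisms induced by $\Isom^+$, which your proposed ``cleanest uniform'' invariant wrongly builds in. The upper and lower pairs are represented by the cores $\tilde\tau_+$ and $\tilde\tau_-$ of the two Heegaard solid tori $V_\infty$ and $V_r$ of the lens space $M(K(r))\cong L(p,q)$. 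Writing $g=[\tilde\tau_+]$, one computes $[\tilde\tau_-]=\pm q'g$ where $qq'\equiv1\pmod p$; your comparison of ``$q$'' with ``$q^{-1}$'' expresses the two classes with respect to two \emph{different} generators. The correct criterion for coincidence is therefore $q'\equiv\pm1\pmod p$, equivalently $q\equiv\pm1\pmod p$ --- precisely the excluded non-hyperbolic case --- and not $q^2\equiv\pm1\pmod p$. Done correctly, the homological invariant alone settles the proposition uniformly, with no case division by symmetry type.

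As written, however, the proof is incomplete: you believe the invariant fails when $q^2\equiv\pm1\pmod p$ (it does not, unless one weakens it as above), and your treatment of those ``residual'' cases is a plan rather than an argument --- you describe what you \emph{would} do with eigenvalue patterns on covers, but never verify that the tunnel-swapping automorphism $g_*$ fails to be conjugation composed with inversion. Note that $g_*$ really does carry the upper pair to the lower pair, and its lift acts on $H_1(M(K(r)))$ by multiplication by $\pm q'\neq\pm1$; that is exactly why the unweakened invariant still separates the two pairs in the symmetric cases, and why quotienting by $\Isom^+$ destroys the argument precisely where you need it. For comparison, the paper's own proof takes a completely different and much shorter route: equivalence of the two pairs would force $\tau_+$ and $\tau_-$ to be properly homotopic in $E(K(r))$, contradicting Morimoto--Sakuma's Example (3.4), whose proof establishes non-proper-homotopy and not merely non-isotopy.
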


\begin{proof}
Suppose the upper and lower meridian pairs 
of a hyperbolic $2$-bridge link group $G(r)$
are equivalent.
Then the upper tunnel $\tau_+$ and the lower tunnel $\tau_-$
are properly homotopic in $E(K(r))$.
This contradicts \cite[Example (3.4)]{Morimoto-Sakuma},
which implies that $\tau_+$ and $\tau_-$ are not properly homotopic.
(Though it is only claimed that they are not isotopic,
the proof actually shows that they are not properly homotopic.) 
\end{proof}

\section{Long upper/lower meridian pairs and intermediate peridian pairs}
\label{sec:long-meridian-pair}

For a link $K$ in $S^3$, let $M(K)$ be the double branched covering of $S^3$ branched over $K$.
Then its fundamental group is intimately related with the
{\it $\pi$-orbifold group} $\Piorb(K)$ of $K$,
which is defined as the quotient of the link group
$\pi_1(E(K))$ by the normal closure of the squares of meridians
(see \cite{Boileau-Zimmermann}).
In fact, $\Piorb(K)$ is the semidirect product
$\pi_1(M(K)) \rtimes \ZZ_2$,
where the action of $\ZZ_2$ on $\pi_1(M(K))$ is
given by the action of the covering transformation group.
If $K$ is a $2$-bridge link $K(r)$ with $r=q/p$,
then $\Piorb(r):=\Piorb(K(r))$ is isomorphic to the semidirect product
\[
\pi_1(M(K(r))) \rtimes \ZZ_2 \cong
H_1(M(K(r))) \rtimes \ZZ_2
\cong
\ZZ_p \rtimes \ZZ_2
\cong D_p,
\]
where $D_p$ is the dihedral group of order $2p$.

For a meridian pair $\{m_1,m_2\}$ in $\pi_1(E(K))$,
let $\omega(m_1,m_2)$ be the element of $\Piorb(K)$
represented by the product $m_1m_2\in \pi_1(E(K))$.
The following simple observation is a key tool
for the proof of Theorem \ref{main-theorem2}(1).

\begin{lemma}
\label{lem:invariant-omega}
(1) The element $\omega(m_1,m_2)\in \Piorb(K)$, up to inversion and conjugation,
is uniquely determined by the equivalence class of $\{m_1,m_2\}$.
Namely, if two meridian pairs $\{m_1,m_2\}$ and $\{m_1',m_2'\}$ of 
$\pi_1(E(K))$
are equivalent, then $\omega(m_1',m_2')$ is conjugate to 
$\omega(m_1,m_2)$ or its inverse in $\Piorb(K)$.

(2)
If $\{m_1,m_2\}$ is a generating meridian pair
of a $2$-bridge link group $G(r)$ with $r=q/p$,
then $\omega(m_1,m_2)$ is a generator
of the the index $2$ subgroup $H_1(M(K(r)))\cong \ZZ_p$ of
$\Piorb(r)\cong H_1(M(K(r))) \rtimes \ZZ_2 \cong D_p$.
\end{lemma}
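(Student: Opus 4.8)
The plan is to establish the two assertions separately, both by tracking the ``product of meridians'' element through the algebraic structure of the $\pi$-orbifold group.

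For part (1), I would start from the definition of equivalence. Suppose $\{m_1',m_2'\}$ is obtained from $\{m_1,m_2\}$ by first replacing each generator by an inverse and then conjugating simultaneously by some $g\in\pi_1(E(K))$: that is, $\{m_1',m_2'\}=\{g\,m_1^{\epsilon_1}g^{-1},\,g\,m_2^{\epsilon_2}g^{-1}\}$ with $\epsilon_i\in\{\pm1\}$. The point is that in the $\pi$-orbifold group $\Piorb(K)$ the image $\bar m$ of every meridian $m$ satisfies $\bar m^2=1$, so $\bar m^{-1}=\bar m$; hence passing from $m_i$ to $m_i^{\epsilon_i}$ does not change the image of $m_i$ in $\Piorb(K)$. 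Therefore the image of $m_1'm_2'$ in $\Piorb(K)$ is $\bar g\,(\bar m_1\bar m_2)\,\bar g^{-1}$ if we kept the order $(m_1,m_2)$, and $\bar g\,(\bar m_2\bar m_1)\,\bar g^{-1}$ if we swapped the two factors; since $\bar m_2\bar m_1=(\bar m_1\bar m_2)^{-1}$ (again using $\bar m_i^2=1$), the element $\omega(m_1',m_2')$ is conjugate either to $\omega(m_1,m_2)$ or to its inverse. This is precisely the asserted invariance. The only subtlety is to make sure that the ``unordered'' nature of the pair is handled — and that is exactly where the inversion clause comes in.

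For part (2), suppose $\{m_1,m_2\}$ generates $G(r)=\pi_1(E(K(r)))$. Then the images $\bar m_1,\bar m_2$ generate $\Piorb(r)\cong D_p$. In the dihedral group $D_p=\langle x,t\mid x^p,t^2,txt^{-1}=x^{-1}\rangle$, the subgroup $\ZZ_p\cong H_1(M(K(r)))$ is the rotation subgroup, and any two reflections generate $D_p$ if and only if their product (which is a rotation) has order exactly $p$, i.e. is a generator of $\ZZ_p$. Concretely: $\bar m_1$ and $\bar m_2$ are elements of order $2$ (or $1$, but they cannot be trivial since then the pair would fail to generate), hence reflections in $D_p$; their product $\omega(m_1,m_2)=\bar m_1\bar m_2$ lies in the rotation subgroup $\ZZ_p$; and the subgroup generated by $\bar m_1,\bar m_2$ equals $\langle \bar m_1\bar m_2,\bar m_1\rangle$, which is all of $D_p$ precisely when $\bar m_1\bar m_2$ generates $\ZZ_p$. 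Therefore $\omega(m_1,m_2)$ is a generator of $H_1(M(K(r)))\cong\ZZ_p$, as claimed. One should remark that $\bar m_1\ne\bar m_2$ here (else the generated subgroup would be of order at most $2<2p$), so the two meridians map to distinct reflections, consistent with the picture.

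The main obstacle — really the only place needing care — is the bookkeeping in part (1): verifying that the combination of (a) possibly inverting each meridian, (b) possibly swapping the two meridians, and (c) conjugating, collectively produces nothing worse than ``conjugate to $\omega$ or $\omega^{-1}$''. Each of the three operations individually has an obvious effect on $\omega$ modulo the relation $\bar m_i^2=1$ in $\Piorb(K)$, so the verification amounts to composing these effects; I would present it as the short computation above rather than belaboring the cases. Everything else (the structure $\Piorb(r)\cong D_p$, the identification $\pi_1(M(K(r)))\cong H_1(M(K(r)))\cong\ZZ_p$) is quoted from the discussion immediately preceding the lemma.
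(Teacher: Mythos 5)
Your proof is correct and follows essentially the same route as the paper's: part (1) is the observation that $\bar m_i^2=1$ in $\Piorb(K)$ makes inversion and swapping harmless, and part (2) is the dihedral-group fact that two reflections generate $D_p$ exactly when their product generates the rotation subgroup (the paper phrases this via the quotient $D_\infty\to D_p$, which is the same computation). The only cosmetic point is that ``order $2$ hence a reflection'' should really be justified by the semidirect-product description $\Piorb(K)\cong\pi_1(M(K))\rtimes\ZZ_2$ quoted before the lemma (meridians lie in the nontrivial coset of $\pi_1(M(K))$, ruling out the central rotation when $p$ is even), but as you note this is part of the setup you are quoting.
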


\begin{proof}
(1) follows from the fact that $m_1^2=m_2^2=1$ in $\Piorb(K)$
and the definition of the equivalence of meridian pairs.

(2)
Suppose that $\{m_1,m_2\}$ is a generating meridian pair
of a $2$-bridge link group $G(r)$ with $r=q/p$.
Then there is an epimorphism from 
the infinite dihedral group 
$D_{\infty}=\langle m_1, m_2 \svert m_1^2, \ m_2^2\rangle$
generated by the symbols $m_1$ and $m_2$
onto $\Piorb(r)\cong D_p$
which maps the elements $m_1$ and $m_2$ of $D_{\infty}$
to the (images of the) meridians $m_1$ and $m_2$ in $\Piorb(r)$, respectively.
Since $D_{\infty}$ is isomorphic to the semi-direct product
$\langle m_1m_2\rangle \rtimes \ZZ_2$,
it follows that $\Piorb(r)\cong D_p$ is the quotient of
$D_{\infty}$
by the infinite cyclic normal subgroup
$\langle (m_1m_2)^p\rangle$.
Hence $\omega(m_1,m_2)=m_1m_2$ generates 
$H_1(M(K(r)))\cong \langle m_1m_2 \svert p(m_1m_2)=0 \rangle$.
\end{proof}

Note that the long upper/lower meridian pairs are defined for all 
$2$-bridge knots and that
the intermediate meridian pairs are defined for all 
$2$-component $2$-bridge links.
The following proposition together with the above lemma
shows that the long upper/lower meridian pairs 
and the intermediate meridian pairs are not generating pairs
of hyperbolic $2$-bridge link groups.

\begin{proposition}
\label{prop:long-meridian-pair}
(1) Let $K(r)$ with $r=q/p$ ($p$: odd) be a $2$-bridge knot,
and let $\{m_1,m_2\}$ be the long upper (or lower) meridian pair
of the knot group $G(r)$.
Then $\omega(m_1,m_2)=0$ in $H_1(M(K(r)))\cong \ZZ_p$.
In particular, if $K(r)$ is a nontrivial knot
(i.e. $p\ge 3$), then $\omega(m_1,m_2)$ is not a generator of $H_1(M(K(r)))$.

(2) Let $K(r)$ with $r=q/p$ ($p$ even) be a nontrivial 
$2$-component $2$-bridge link,
and let $\{m_1,m_2\}$ be an intermediate meridian pair
of the link group $G(r)$.
Then $2\omega(m_1,m_2)=0$ in $H_1(M(K(r)))\cong \ZZ_p$.
In particular, if $K(r)$ is not a Hopf link
(i.e. $p\ge 4$), 
then $\omega(m_1,m_2)$ is not a generator of $H_1(M(K(r)))$.
\end{proposition}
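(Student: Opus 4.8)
The plan is to combine Lemma~\ref{lem:invariant-omega} with an explicit computation of the invariant $\omega(m_1,m_2)\in\Piorb(r)$ for each of the relevant meridian pairs, carried out inside the double branched covering. For $K(r)=K(q/p)$ the manifold $M(K(r))$ is the lens space $L(p,q)$, so $H_1(M(K(r)))\cong\ZZ_p$, and the deck transformation $\iota$ of the covering $\pi\colon M(K(r))\to S^3$ acts on $H_1(M(K(r)))$ by $-\operatorname{id}$; this is exactly the semidirect product structure $\Piorb(r)\cong\ZZ_p\rtimes\ZZ_2$. The first step is to identify, for a tunnel $\tau$ of $K(r)$ giving a meridian pair $\{m_1,m_2\}$, the element $\omega(m_1,m_2)$ --- which lies in the index~$2$ subgroup $\pi_1(M(K(r)))\cong H_1(M(K(r)))$ of $\Piorb(r)$ --- with the homology class $[\widetilde\tau]$ of the preimage $\widetilde\tau:=\pi^{-1}(\tau)$. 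Since $\partial\tau$ lies on the branch locus $K(r)$, the two lifts of the arc $\tau$ are joined at the unique lifts of the two branch points $\partial\tau$, so $\widetilde\tau$ is a single circle; and inspecting the loop $m_1m_2$ --- a meridian at one endpoint of $\tau$, followed along $\tau$, a meridian at the other endpoint, then back along $\tau$ --- one checks that its lift to $M(K(r))$ is freely homotopic to $\widetilde\tau$. Applied to the upper and lower tunnels $\tau_\pm$ this recovers Lemma~\ref{lem:invariant-omega}(2): $\widetilde{\tau_\pm}$ is a core of the corresponding Heegaard solid torus of $L(p,q)$, hence a generator of $\ZZ_p$.

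For assertion~(1), recall from Section~\ref{sec:two-bridge-link-group} that $\tau_+^c$ is the arc of the unknotted circle $\Fix(h_+)$ complementary to $\tau_+$, so $\pi^{-1}(\Fix(h_+))$ is a graph with two vertices --- the lifts of the two points of $\Fix(h_+)\cap K(r)$ --- and four edges, two of which form $\widetilde{\tau_+}$ and two of which form $\widetilde{\tau_+^c}$. Splicing these four edges and using $\iota_*=-\operatorname{id}$ on $\ZZ_p$ produces a relation of the form $[\widetilde{\tau_+}]=[\widetilde{\tau_+^c}]+2U$ in $H_1(M(K(r)))$, where $U$ is the class of a lift of the loop $\Fix(h_+)$; as $p$ is odd, the assertion $\omega(m_1,m_2)=0$ for the long upper meridian pair is therefore equivalent to $2U=[\widetilde{\tau_+}]$. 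This identity is verified by drawing $\Fix(h_+)$ with respect to the bridge sphere --- equivalently, with respect to the Conway sphere of the rational tangle decomposition of $(S^3,K(r))$, which exhibits $M(K(r))=L(p,q)$ as a genus-one Heegaard splitting --- and tracking the lift, which shows $[\widetilde{\tau_+^c}]=0$. The long lower meridian pair is handled in the same way, using $\Fix(h_-)$.

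For assertion~(2), the tunnel $\tau_L$ (resp.\ $\tau_R$) joins a single component $K_i$ of $K(r)$ to itself, so $m_1$ and $m_2$ are meridians of the same component; hence $\omega(m_1,m_2)$ lies in the kernel of the natural map $\Piorb(r)\to H_1(E(K(r));\ZZ_2)$, that is, in the commutator subgroup $[\Piorb(r),\Piorb(r)]$, which is the index~$2$ subgroup $\cong\ZZ_{p/2}$ of $\ZZ_p\cong H_1(M(K(r)))$. Tracking the lift $\widetilde{\tau_L}$ with respect to the tangle decomposition, exactly as in assertion~(1), then pins down $[\widetilde{\tau_L}]$ and yields $2[\widetilde{\tau_L}]=0$. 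Finally, $0$ is not a generator of $\ZZ_p$ when $p\ge 3$, and an element of order dividing $2$ is not a generator of $\ZZ_p$ when $p\ge 4$; combined with Lemma~\ref{lem:invariant-omega} this shows that none of the long upper/lower or intermediate meridian pairs generates $G(r)$.

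The step I expect to be the main obstacle is the identification, in the second and third paragraphs, of the homology class in $L(p,q)$ of the lifted tunnel: one must analyse carefully how $\Fix(h_+)$ (resp.\ $\tau_L$, $\tau_R$) sits relative to the two-bridge/tangle decomposition of $(S^3,K(r))$ and translate this into the position of its preimage inside the genus-one Heegaard splitting of $L(p,q)$. The reduction of the first paragraph, together with the list of tunnels already produced in Section~\ref{sec:two-bridge-link-group}, are the ingredients that keep this last analysis short.
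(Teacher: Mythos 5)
Your reduction in the first paragraph --- identifying $\omega(m_1,m_2)$ with the class of the lifted tunnel $[\widetilde\tau]$ in $H_1(M(K(r)))\cong\ZZ_p$, with the deck transformation acting by $-\operatorname{id}$ --- is exactly the paper's starting point, and your remark that $\widetilde{\tau_\pm}$ is a core of a Heegaard solid torus of the lens space recovers Lemma~\ref{lem:invariant-omega}(2) correctly. The genuine gap is in assertion~(1): the entire content of the proposition is the computation $[\widetilde{\tau_+^c}]=0$, and you never actually perform it. Your detour through the class $U$ of a lift of the circle $\Fix(h_+)$ only rewrites the desired identity (the relation $2U=[\widetilde{\tau_+}]\pm[\widetilde{\tau_+^c}]$ makes ``$2U=[\widetilde{\tau_+}]$'' \emph{equivalent} to ``$[\widetilde{\tau_+^c}]=0$''), and the sentence ``this identity is verified by drawing \dots and tracking the lift, which shows $[\widetilde{\tau_+^c}]=0$'' asserts the conclusion as its own verification. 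Tracking the lift through the genus-one Heegaard splitting is not straightforward here, because $\tau_+^c$ is not an arc on the Conway sphere (unlike the extra tunnels of Section~\ref{sec:extra-meridian-pair}, whose slopes $q_i/p_i$ do make such a computation immediate). The paper's missing idea is to use a different surface altogether: $\tau_-^c$ (resp.\ $\tau_L\cup\tau_R$) is a \emph{separating} arc (resp.\ arc system) in a checkerboard surface $F$ of the alternating diagram; cutting $S^3$ along $F$ exhibits $M(K(r))$ as a double of the cut-open manifold $M_F$, the lift $\widetilde{\tau_-^c}$ becomes a separating curve on $\partial M_F$, hence null-homologous --- no tracking through the Heegaard splitting is needed. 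The same deferral occurs in your treatment of $2[\widetilde{\tau_L}]=0$ (``exactly as in assertion~(1)''), where the paper instead combines the separating property of $\tau_L\cup\tau_R$ with the fact that the lift of $f$ acts by $-1$ on $H_1$.

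On the positive side, your observation in assertion~(2) that $\tau_L$ joins a single component of $K(r)$ to itself is correct, and it yields a genuinely different and very efficient proof of the conclusion that is actually used: since $m_1$ and $m_2$ are meridians of the same component, $\omega(m_1,m_2)$ lies in the commutator subgroup $[\Piorb(r),\Piorb(r)]=2\ZZ_p\cong\ZZ_{p/2}$, whose elements have order dividing $p/2<p$, so $\omega(m_1,m_2)$ is not a generator once $p\ge 4$. Note, however, that this proves a statement different from the one asserted: $\omega\in 2\ZZ_p$ is neither implied by nor implies $2\omega=0$ (compare $p=6$, where $2\omega=0$ means $\omega\in\{0,3\}$ while your argument gives $\omega\in\{0,2,4\}$), so if you adopt this route you should restate the conclusion of (2) accordingly. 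No analogous shortcut exists for (1), where $p$ is odd and the commutator subgroup of $\Piorb(r)\cong D_p$ is all of $\ZZ_p$; there the homological computation cannot be avoided, and it is precisely the step your proposal leaves open.
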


\begin{figure}
	\centering
	\includegraphics[width=9cm]{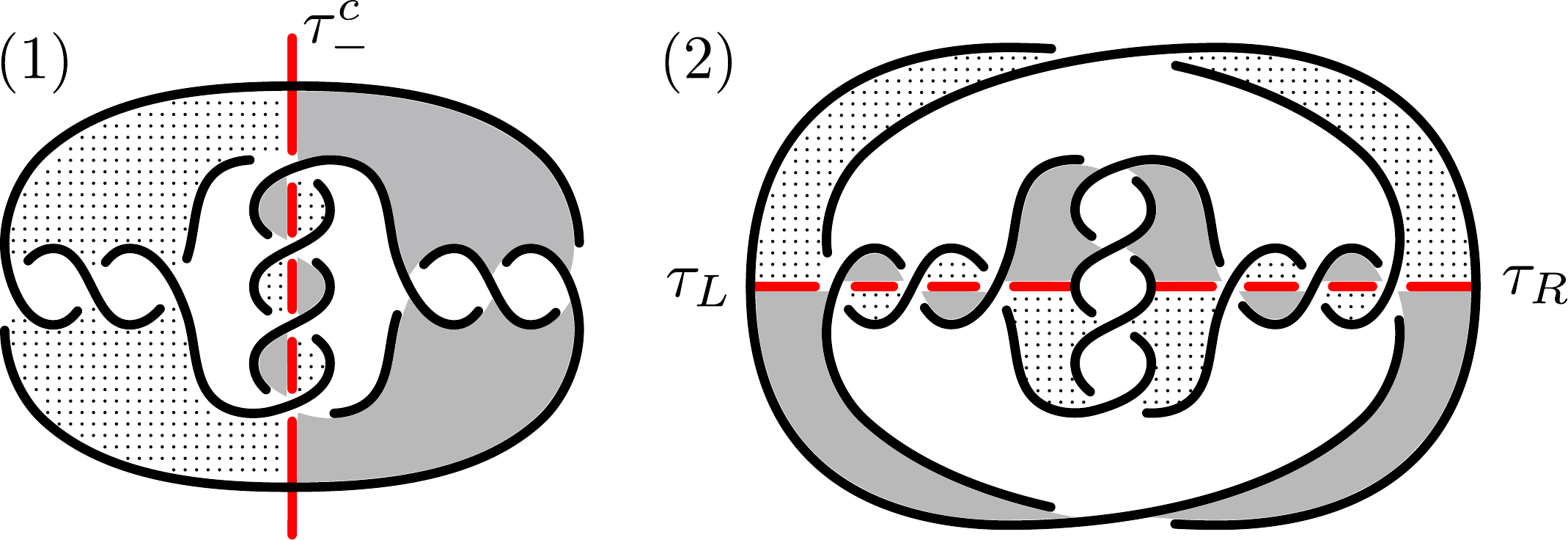}
	\caption{Checkerboard surfaces associated with the diagrams
	in Figure \ref{Z2+Z2-symmetry} containing $\tau_-^c$ in (1)
	and $\tau_L\cup\tau_R$ in (2), respectively, as separating arc systems}
	\label{checkerboard-surface}
\end{figure}

\begin{proof}
(1) We prove the assertion for the long lower meridian pair.
(The assertion for the long upper meridian pair is a consequence of this,
because there is an isomorphism $G(q/p)\cong G(q'/p)$,
where $qq'\equiv 1 \pmod{p}$,
which maps the long upper meridian pair of $G(q/p)$
to the long lower meridian pair of $G(q'/p)$
(cf. \cite[Proposition 2.1]{AOPSY})).
Recall that the long lower meridian pair $\{m_1,m_2\}$ is represented by
the tunnel $\tau_-^c\subset \Fix(h_-)$.
Then $\tau_-^c$ is properly embedded in the checker-board surface, $F$,
illustrated in Figure \ref{checkerboard-surface}(1),
associated with the link diagram in Figure \ref{Z2+Z2-symmetry}(1),
and moreover, $\tau_-^c$ is separating in $F$.
Let $M_F$ be the $3$-manifold obtained by cutting $S^3$ along $F$,
and let $\sigma$ be the involution on $\partial M_F$
such that $\Fix(\sigma)$ is the copy of $L$ in $\partial M_F$
and that 
$(M_F,\partial M_F)/\sigma \cong (S^3, F)$.
(Here the symbol $/\sigma$ means to identify $x$ with $\sigma(x)$
for all $x\in\partial M_F$.)
The double branched covering $M(K(r))$ of $S^3$ branched over $K(r)$
is obtained from two copies $M_F^{(0)}$ and $M_F^{(1)}$ of $M_F$
by gluing their boundaries through the homeomorphism
$\partial M_F^{(0)}\to \partial M_F^{(1)}$ induced by $\sigma$,
i.e. the homeomorphism that maps the copy in $M_F^{(0)}$ of a point $x\in \partial M_F$ 
to its copy in $M_F^{(1)}$ of the point $\sigma(x)\in \partial M_F$.
Now, let $\tilde\tau_-^c$ be the simple loop in $\partial M_F$ 
obtained as the inverse image of the proper arc $\tau_-^c\subset F$ in $\partial M_F$
under the projection $\partial M_F \to \partial M_F/\sigma = F$.
By using the fact that
the long lower meridian pair $\{m_1,m_2\}$ is represented by $\tau_-^c$,
we can see that the element $\omega(m_1,m_2)\in H_1(M(K(r))) < \Piorb(r)$ 
is represented by the simple loop $\tilde\tau_-^c$.
To be precise, $\omega(m_1,m_2)$ is represented by
the copy of the loop $\tilde\tau_-^c$ in $\partial M_F^{(0)}\subset M(K(r))$
with a suitable orientation.
However, the loop $\tilde\tau_-^c$ is separating in $\partial M_F$,
because it is the inverse image of the separating arc $\tau_-^c$ of $F$.
Thus $\tilde\tau_-^c$ is null-homologous in $M_F$, and
hence 
$\omega(m_1,m_2)=[\tilde\tau_-^c]=0 \in H_1(M(K(r)))$, as desired.

(2) 
We prove the assertion for the intermediate meridian pair $\{m_1,m_2\}$ 
represented by the tunnel $\tau_L \subset \Fix(h)$.
(The assertion for that represented by $\tau_R$ is a consequence of this
by Proposition \ref{prop:strong-invertion_2}(1).)
Observe that the checkerboard surface, $F$, for $K(r)$ in 
Figure \ref{checkerboard-surface}(2)
contains $\tau_L\cup \tau_R$ as a separating arc system.
Observe also that the involution $f$ 
of $(S^3,K(r))$
introduced in Figure \ref{Z2+Z2-symmetry}(2)
preserves the surface $F$ and interchanges $\tau_L$ and $\tau_R$.
Let $M_F$ and $\sigma$ be as in the proof of (1), and recall that
the double branched covering $M_2(K(r))$ is obtained
by gluing two copies $M_F^{(0)}$ and $M_F^{(1)}$ of $M_F$
through the homeomorphism
$\partial M_F^{(0)}\to \partial M_F^{(1)}$ induced by 
the involution $\sigma$ on $\partial M_F$.
Now, let $\tilde\tau_L$ and $\tilde\tau_R$ be the simple loops
in $\partial M_F$ obtained as the 
inverse image of the arcs $\tau_L$ and $\tau_R$ in $F$, respectively,
under the projection $\partial M_F \to \partial M_F/\sigma = F$
($i=1,2$).
As in (1), the element $\omega(m_1,m_2)\in H_1(M(K(r))) < \Piorb(r)$ 
is represented by the simple loop 
$\tilde\tau_L\subset \partial M_F\cong \partial M_F^{(0)}\subset M_2(K(r))$
with a suitable orientation.
We endow $\tilde\tau_L$ with this orientation.

Now let $\hat f$ be the orientation-preserving involution on $M_F$ induced by 
the involution $f$. 
Then $\tilde \tau_R=\hat f(\tilde \tau_L)$, 
and we orient $\tilde \tau_R$ as the image  by $\hat f$
of the oriented loop $\tilde \tau_L$.
On the other hand,
since $\tau_L\cup \tau_R$ is a separating arc system in $F$,
$\tilde\tau_L\cup \tilde\tau_R$ is a separating loop system
in $\partial M_F$
(where we forget the orientation),
and $\hat f$ interchanges the two components of 
$\partial M_F-(\tilde\tau_L\cup \tilde\tau_R)$.
Since (the restriction to $\partial M_F$ of) $\hat f$ is
orientation-preserving,
this implies that the cycle 
$\tilde\tau_L-\hat f(\tilde\tau_L)=\tilde\tau_L-\tilde \tau_R$
is null homologous in $\partial M_F$.
Hence we have $[\tilde\tau_L]=[\hat f(\tilde\tau_L)]$ in $H_1(M(K(r)))$.

Let $\tilde f$ be the lift of $f$ to $M_2(K(r))=M_F^{(0)}\cup M_F^{(1)}$
that is obtained  by gluing the copies of the involution $\hat f$ on $M_F^{(i)}$ ($i=1,2$).
Note that
the double branched covering $M(K(r))$ is the union of two solid tori,
whose cores are the circles, $\tilde \tau_+$ and $\tilde\tau_-$,
obtained as the inverse images of
the upper tunnel $\tau_+$ and the lower tunnel $\tau_-$, respectively
(cf. the second paragraph of Section \ref{sec:extra-meridian-pair}).
Since $\Fix(f)$ intersects $\tau_{\pm}$ transversely in a single point
and since $\tau_{\pm}\subset M_F$, we see that
$\Fix(\tilde f)$ intersects each of 
the core circle $\tilde\tau_{\pm}$ in two points, and so
$\tilde f$ acts on the circles $\tilde\tau_{\pm}$ as an orientation-reversing involution.
Since each of $[\tilde\tau_{\pm}]$ is a generator of $H_1(M(K(r)))$,
this implies that $\tilde f_*$ acts on $H_1(M(K(r)))$
as multiplication by $-1$.
Hence, we have
\[
[\tilde\tau_L]=[\hat f(\tilde\tau_L)]
=\tilde f_*([\tilde\tau_L])=-[\tilde\tau_L] \in H_1(M(K(r)))
\]
Thus we have $2\omega(m_1,m_2)=2[\tilde\tau_L]=0$ in $H_1(M(K(r)))$,
as desired.
\end{proof}

\section{Extra meridian pairs}
\label{sec:extra-meridian-pair}

In this section, we prove the following proposition,
which, together with Lemma \ref{lem:invariant-omega}, implies that
the extra meridian pairs are not generating pairs of hyperbolic $2$-bridge link groups.

\begin{proposition}
\label{prop:extra-meridian-pair}
Let $\{m_1,m_2\}$ be an extra meridian pair
of a hyperbolic $2$-bridge link group $G(r)$.
Then $\omega(m_1,m_2)\in H_1(M(K(r)))$
is not a generator of $H_1(M(K(r)))$.
\end{proposition}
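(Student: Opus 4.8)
The plan is to mimic the structure of the proof of Proposition \ref{prop:long-meridian-pair}, replacing the checkerboard-surface argument by a tangle-decomposition argument adapted to the extra symmetry $g$ (or $g^ih$). Recall from Section \ref{sec:two-bridge-link-group} that an extra meridian pair is represented by an arc component $\eta$ of $\Fix(h')\cap E(K(r))$, where $h'$ is an extra strong inversion interchanging $\tau_+$ and $\tau_-$. The key point is that the fixed-point circle $\Fix(h')$, being disjoint from $\tau_+\cup\tau_-$ except in a controlled way, sits nicely with respect to the decomposition of $(S^3,K(r))$ into two rational tangles. First I would set up this tangle decomposition: write $(S^3,K(r))=(B_1,t_1)\cup_{(S^2,P^0)}(B_2,t_2)$ as the union of two trivial (rational) tangles glued along the Conway sphere, chosen so that the extra symmetry $g$ is visible as in Figure \ref{extra-symmetry}. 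Passing to double branched covers, $M(K(r))$ is the union of two solid tori $V_1,V_2$ glued along a torus, and $H_1(M(K(r)))\cong\ZZ_p$ is generated by the core of either solid torus.

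The next step is to locate the lift $\tilde\eta$ of the arc $\eta$ in $M(K(r))$ and to identify the homology class $\omega(m_1,m_2)=[\tilde\eta]$ up to sign. Since $\eta\subset\Fix(h')$ and $h'$ descends to an involution $\tilde h'$ of $M(K(r))$ whose fixed set is the preimage of $\Fix(h')$, the arc $\eta$ lifts to a \emph{fixed} arc (or circle) of $\tilde h'$; the relevant loop is obtained as before from $\eta$ together with meridional loops at its endpoints, and its class in $H_1(M(K(r)))$ is computable. The crucial algebraic input is the way $g$ acts on homology: the extra symmetry $g$ interchanges $\tau_+$ and $\tau_-$, whose lifts $\tilde\tau_+,\tilde\tau_-$ are the cores of $V_1,V_2$ and each generates $H_1(M(K(r)))$; hence $\tilde g_*$ sends a generator to $\pm$ a generator. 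Analyzing whether $\tilde g_*$ acts by $+1$ or $-1$ (this is dictated by whether $g$ preserves or reverses a coherent orientation of the Conway sphere, equivalently by the parity data $b_{n/2}$ even/odd or the congruence $q^2\equiv 1$ vs.\ $q^2\equiv p+1\pmod{2p}$, exactly the cases enumerated in Figure \ref{extra-symmetry}), combined with the relation $h'=gh$ or $h'=g^i h$ and the fact (proved inside Proposition \ref{prop:long-meridian-pair}) that the involution coming from the characteristic $(\ZZ_2)^2$ acts on $H_1(M(K(r)))$ by $-1$, forces a linear constraint on $[\tilde\eta]$. I expect the constraint to take the form $\lambda\,\omega(m_1,m_2)=0$ in $\ZZ_p$ for a fixed small integer $\lambda$ (such as $2$), or more precisely that $\omega(m_1,m_2)$ lies in the $(\pm1)$-eigenspace of $\tilde g_*$, which in $\ZZ_p$ is a proper subgroup; either way $\omega(m_1,m_2)$ cannot be a generator once $K(r)$ is a nontrivial hyperbolic link, i.e.\ $p$ is large enough.

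To organize the computation cleanly I would treat the cases of Figure \ref{extra-symmetry} separately: case (1), $p$ odd with $q^2\equiv 1\pmod p$ and extra inversions $g^ih$ ($i=1,3$); cases (2a), (2b), $p$ even with $q^2\equiv 1\pmod{2p}$ and extra inversion $h'=gh$ or $h'=g$ according to the parity of $b_{n/2}$; case (3) needs no argument since there is no extra strong inversion. In each case the symmetric continued fraction expansion from Lemma \ref{lem:conti-fract-expansion1} (and Lemma \ref{lem:conti-fract-expansion2}(1) for case (3)) makes the action of $g$ on the tangle decomposition explicit, so that the induced action $\tilde g_*$ on $\ZZ_p$ can be pinned down. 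The main obstacle, and the step I would spend the most care on, is precisely this bookkeeping: correctly tracking orientations through the double-branched-cover construction so as to determine the sign of $\tilde g_*$ on a generator of $H_1(M(K(r)))$, and then verifying that the resulting eigenspace condition genuinely excludes generators of $\ZZ_p$ for all admissible $p$ (in particular handling any small-$p$ coincidences or the case where the relevant eigenspace is all of $\ZZ_p$, which should only happen for $p$ too small to give a hyperbolic link). Once the sign is settled, the conclusion follows formally, and together with Lemma \ref{lem:invariant-omega} this shows the extra meridian pairs are not generating pairs, completing the list of exclusions needed for Theorem \ref{main-theorem2}(1).
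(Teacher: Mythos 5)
Your setup (the two–rational-tangle decomposition, the double branched cover as a union of two solid tori with $H_1(M(K(r)))\cong\ZZ_p$, and the realisation of the extra symmetry via the symmetric continued fraction) matches the paper's, and your instinct to extract a linear constraint on $\omega(m_1,m_2)$ from the extra symmetry is a reasonable one. But the central step of your plan contains a genuine error: you assert that $\tilde g_*$ acts on $H_1(M(K(r)))\cong\ZZ_p$ by $\pm 1$ and propose to "analyze whether it is $+1$ or $-1$." This is false. Since $g$ interchanges the two solid tori, $\tilde g_*$ sends the core of one to (plus or minus) the core of the other, and these two cores differ in homology by multiplication by $q^{\pm 1}$; so $\tilde g_*$ is multiplication by $\pm q^{\pm 1}$ on $\ZZ_p$, which equals $\pm 1$ precisely when $q\equiv\pm 1\pmod p$, i.e.\ precisely in the non-hyperbolic case. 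This is not a bookkeeping subtlety: if $\tilde g_*$ were $+1$ your eigenspace constraint would be vacuous, and the actual reason an eigenvector constraint $(\pm q^{\pm1}-\epsilon)\,\omega=0$ excludes generators is exactly that $q\not\equiv\pm1\pmod p$ — hyperbolicity of $K(r)$ — and not that "$p$ is large enough" (for every $p$ there are torus links $K(1/p)$, so no largeness of $p$ helps). A second gap is that you never identify $\omega(m_1,m_2)$ as a concrete class, so the claim that it lies in a $(\pm1)$-eigenspace of $\tilde g_*$ is unsupported; establishing this requires knowing which curve on the Conway sphere represents the extra arc.

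For comparison, the paper avoids eigenvalues altogether. It realises the extra symmetry by a matrix $R\in\GL(2,\ZZ)$ inducing the Farey reflection that swaps $\infty$ and $r$, observes that the extra meridian pairs are represented by arcs of slope $q_i/p_i$, where $q_1/p_1$ and $q_2/p_2$ span the fixed Farey edge (resp.\ fixed geodesic) of that reflection, and computes directly that $\omega(m_1,m_2)=[\tilde\alpha_{q_i/p_i}]=p_i[\tilde\alpha_0]$ while $p=2p_1p_2$ (resp.\ $p=p_1p_2$). Since $1<p_i<p$ and $p_i$ divides $p$, the class is not a generator. If you want to salvage your route, you would need both missing ingredients: the identification of $\omega$ with $[\tilde\alpha_{q_i/p_i}]$ (an eigenvector of $\tilde g_*$ because $R$ fixes the slope $q_i/p_i$), and the correct computation $\tilde g_*=\pm q^{\pm1}$, at which point hyperbolicity finishes the argument. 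As written, the proposal does not close either gap.
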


To this end,
we regard $(S^3,K(r))$ as the union of two rational tangles
$\rtangle{\infty}$ and $\rtangle{r}$
of slopes $\infty$ and $r$,
as in \cite[Section 2]{AOPSY}.
Here the common boundary
$\partial \rtangle{\infty}=\partial \rtangle{r}$
is identified with the Conway sphere $\Conways:=(\RR^2,\ZZ^2)/\RotG$,
where $\RotG$ is the group of isometries
of the Euclidean plane $\RR^2$
generated by the $\pi$-rotations around
the points in the lattice $\ZZ^2$.
For each rational number $s\in\QQQ:=\QQ\cup\{\infty\}$,
a line of slope $s$ in $\RR^2-\ZZ^2$
projects to an essential simple loop,
denoted by $\alpha_s$, in the $4$-times punctured sphere $\PConway:=\Conway-\PP$.
Similarly,
a line of slope $s$ in $\RR^2$ passing through a point $\ZZ^2$
determines an essential simple proper arc in $\PConway$.
For each $s$, there are exactly two essential simple proper arcs in $\PConway$
obtained in this way, 
and the union of the two arcs is denoted by $\delta_s$.
The rational number $s$ is called the {\it slope} of $\alpha_s$ and $\delta_s$.
By the definition of the rational tangles,
the loops $\alpha_{\infty}$ and $\alpha_r$ bound discs
in $\threeball-t(\infty)$ and $\threeball-t(r)$, respectively.

The double branched covering $M(K(r))$ of $(S^3,K(r))$
is the union of the solid tori $V_{\infty}$ and $V_r$
that are obtained as the double branched coverings
of $\rtangle{\infty}$ and $\rtangle{r}$, respectively.
Let $\tilde\alpha_0$ and $\tilde\alpha_{\infty}$ be lifts in $\partial V_{\infty}$ of
the simple loops $\alpha_0$ and $\alpha_{\infty}$, respectively.
(Here a lift means a connected component of the inverse image.)
Then $\tilde\alpha_0$ and $\tilde\alpha_{\infty}$
form the meridian and the longitude of $V_{\infty}$.
Similarly a lift $\tilde\alpha_r$ of $\alpha_r$ in $\partial V_r$ is a meridian of $V_r$.
Thus the loops
$\tilde\alpha_{\infty}$ and $\tilde\alpha_{r}$ represent the trivial elements of
$H_1(V_{\infty})$ and $H_1(V_r)$, respectively.
Since
$[\tilde\alpha_{r}]=p[\tilde\alpha_0]+q[\tilde\alpha_{\infty}]$ in
$H_1(\partial V_{\infty})$, where $r=q/p$, 
we have
\begin{align*}
H_1(M(K(r))
\cong 
\langle [\tilde\alpha_{0}], [\tilde\alpha_{\infty}] \ | \
[\tilde\alpha_{\infty}], \ [\tilde\alpha_r]\rangle
\cong 
\langle \tilde\alpha_{0} \ | \
p[\tilde\alpha_0]\rangle
\cong
\ZZ_p.
\end{align*}

\medskip

Now recall the following well-known facts
(cf. \cite[Lemma II.3.3]{Sakuma-Weeks}).

\begin{lemma}
\label{lem:conti-fract-expansion2}
For a rational number $r=q/p$ with $0<q\leq p/2$,
consider the continued fraction expansion
$r=[a_1,a_2, \cdots,a_{n}]$ into positive integers $a_i$
such that $a_1\ge 2$ and $a_n\ge 2$.
Then the following hold.

(1) The following conditions are equivalent.
\begin{enumerate}
\item[(a)]
$p$ is even and $q^2\equiv 1 \pmod{2p}$.
\item[(b)]
$n$ is odd, $a_{(n+1)/2}$ is even, and
$(a_1,\cdots a_n)$ is symmetric.
\end{enumerate}

(2) The following conditions are equivalent.
\begin{enumerate}
\item[(a)]
Either (i) $p$ is odd and $q^2\equiv 1 \pmod{p}$,
or (ii) $p$ is even and $q^2\equiv p+1 \pmod{2p}$
\item[(b)]
$n$ is odd, $a_{(n+1)/2}$ is odd, and
$(a_1,\cdots a_n)$ is symmetric.
\end{enumerate}
\end{lemma}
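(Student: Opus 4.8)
The plan is to encode the regular continued fraction expansion by a product of $2\times2$ matrices and read the arithmetic conditions off its entries. Since $0<q\le p/2$ we have $1/r=p/q\ge 2$, so $1/r=p/q=[a_1;a_2;\dots;a_n]$ is the (unique) regular expansion, $a_i\ge1$, $a_1,a_n\ge2$, which is the one in the statement. Set
\[
M:=\begin{pmatrix}a_1&1\\ 1&0\end{pmatrix}\begin{pmatrix}a_2&1\\ 1&0\end{pmatrix}\cdots\begin{pmatrix}a_n&1\\ 1&0\end{pmatrix}=\begin{pmatrix}p&p^{*}\\ q&q^{*}\end{pmatrix}.
\]
Then $\det M=(-1)^{n}$, so $p^{*}q\equiv(-1)^{n-1}\pmod p$; moreover $M^{\mathrm T}$ is the analogous product for the reversed tuple, so $p/p^{*}=[a_n;a_{n-1};\dots;a_1]$ in lowest terms, and from $a_n\ge2$ this gives $0<p^{*}\le p/2$. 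As each elementary factor is symmetric, $M^{\mathrm T}$ corresponds to $(a_n,\dots,a_1)$, hence by uniqueness of the expansion the tuple $(a_1,\dots,a_n)$ is symmetric if and only if $M=M^{\mathrm T}$, i.e.\ if and only if $p^{*}=q$.

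Next I would show that, for $p\ge3$ (the case $p=2$, $r=1/2$, being trivial), the tuple is symmetric with $n$ odd if and only if $q^{2}\equiv1\pmod p$. If it is symmetric with $n$ odd, then $q^{2}=p^{*}q\equiv(-1)^{n-1}=1\pmod p$. Conversely, if $q^{2}\equiv1\pmod p$ then $q\equiv q^{-1}\pmod p$, so $p^{*}\equiv(-1)^{n-1}q\pmod p$; since $0<p^{*},q\le p/2$ this forces $n$ odd (otherwise $p^{*}=p-q>p/2$) and then $p^{*}=q$. Consequently every condition listed in (1)(a) and (2)(a) implies $q^{2}\equiv1\pmod p$, hence $n$ odd and the tuple symmetric; and (1)(a), (2)(a) partition the set of such $r$, since (2)(a)(i) is the case $p$ odd, while for $p$ even one has $q^{2}\equiv1$ or $q^{2}\equiv p+1\pmod{2p}$, these being (1)(a) and (2)(a)(ii). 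As the parity of $a_{(n+1)/2}$ also partitions this set, it is enough to prove: (A) if the tuple is symmetric with $n=2m-1$ and $a_{m}$ even, then $p$ is even and $q^{2}\equiv1\pmod{2p}$; and (B) if it is symmetric with $n=2m-1$ and $a_{m}$ odd, then either $p$ is odd or $p$ is even with $q^{2}\equiv p+1\pmod{2p}$.

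For (A) and (B) I would split $M$ at its central factor. Put $A:=\left(\begin{smallmatrix}a_1&1\\ 1&0\end{smallmatrix}\right)\cdots\left(\begin{smallmatrix}a_{m-1}&1\\ 1&0\end{smallmatrix}\right)=\left(\begin{smallmatrix}p_{m-1}&p_{m-2}\\ q_{m-1}&q_{m-2}\end{smallmatrix}\right)$, the convergent matrix; then $\det A=(-1)^{m-1}$, and each column is primitive, so $\gcd(p_{m-1},q_{m-1})=\gcd(p_{m-1},p_{m-2})=1$. Symmetry of $(a_1,\dots,a_n)$ gives $M=A\left(\begin{smallmatrix}a_m&1\\ 1&0\end{smallmatrix}\right)A^{\mathrm T}$, and a direct multiplication yields
\[
p=p_{m-1}\bigl(a_{m}p_{m-1}+2p_{m-2}\bigr),\qquad q=q_{m-1}\bigl(a_{m}p_{m-1}+2p_{m-2}\bigr)+(-1)^{m-1};
\]
then a short computation using $p_{m-1}q_{m-2}-p_{m-2}q_{m-1}=(-1)^{m-1}$ gives the key identity
\[
q^{2}-1=p\,q_{m-1}\bigl(a_{m}q_{m-1}+2q_{m-2}\bigr),\qquad\text{so that}\qquad q^{2}\equiv 1+p\,a_{m}q_{m-1}\pmod{2p}.
\]
From the formula for $p$, the integer $p$ is odd exactly when $p_{m-1}$ and $a_{m}$ are both odd; in particular $p$ odd $\Rightarrow a_{m}$ odd, and if $p$ is even with $a_{m}$ odd then $p_{m-1}$ must be even, whence $q_{m-1}$ is odd. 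Combining this with the displayed congruence (which, for $p$ even, gives $q^{2}\equiv1$ or $q^{2}\equiv p+1\pmod{2p}$ according as $a_{m}q_{m-1}$ is even or odd) yields (A) and (B), and with the partition remark of the previous paragraph this proves the lemma. The only genuine computation is the key identity; the main point I expect to be the crux is the observation that this central splitting turns the $\pmod{2p}$ refinement into the elementary parity of $a_{m}q_{m-1}$.
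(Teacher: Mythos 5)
Your proof is correct. I checked the computations: the convergent matrix $M=\begin{pmatrix}p&p^{*}\\ q&q^{*}\end{pmatrix}$ with $\det M=(-1)^{n}$, the mirror identity $p/p^{*}=[a_n;\dots;a_1]$ giving $0<p^{*}\le p/2$, the equivalence (for $p\ge 3$) of ``symmetric with $n$ odd'' with $q^{2}\equiv 1\pmod p$, the central splitting $M=AC A^{\mathrm T}$ yielding $p=p_{m-1}(a_mp_{m-1}+2p_{m-2})$ and $q^{2}-1=p\,q_{m-1}(a_mq_{m-1}+2q_{m-2})$, and the final parity bookkeeping all hold (the residue of $q^2$ mod $2p$ is governed by the parity of $a_mq_{m-1}^2$, which agrees with that of $a_mq_{m-1}$). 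The partition argument that reduces the two biconditionals to the one-directional implications (A) and (B) is also sound, and the edge case $p=2$ is correctly set aside. Note that the paper itself offers no proof of this lemma: it is stated as a ``well-known fact'' with a pointer to Lemma II.3.3 of Sakuma--Weeks (and the analogous Lemma \ref{lem:conti-fract-expansion1} is likewise only referenced to Hartley--Kawauchi). So your write-up is a genuine, self-contained elementary argument where the paper relies on a citation; the matrix-transpose characterisation of palindromic expansions and the identity $q^{2}-1=p\,q_{m-1}(a_mq_{m-1}+2q_{m-2})$ are exactly the right tools, and the only thing you might add for completeness is one sentence recalling why the normalised expansion with $a_n\ge 2$ is unique (which is what makes ``$M=M^{\mathrm T}$ iff the tuple is symmetric'' legitimate).
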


\begin{figure}
	\centering
	\includegraphics[width=9cm]{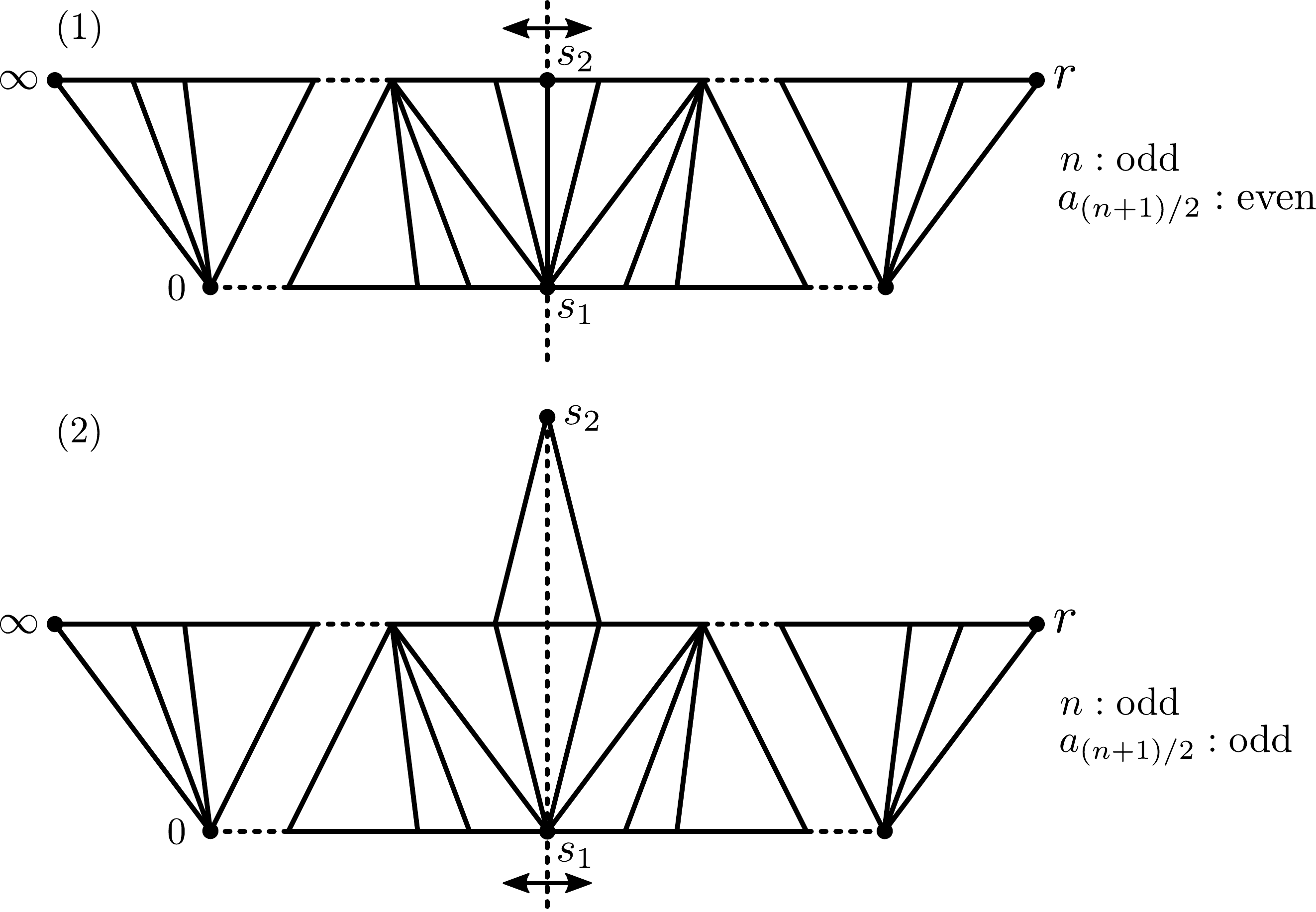}
	\caption{If $r=q/p$ satisfies one of the conditions
	in Lemma \ref{lem:conti-fract-expansion2},
	then there is an orientation-reversing involution of the Farey tessellation 
	$\Farey$ which interchanges $\infty$ and $r$.}
	\label{Farey-Symmetry}
\end{figure}

The symmetry of the continued fraction expansion in the above lemma 
is realised by the symmetry of the Farey tessellation, $\Farey$,
as illustrated by 
Figure \ref{Farey-Symmetry}.
Recall that the
{\it Farey tessellation} is
the tessellation of the upper half
space $\HH^2$ by ideal triangles that are obtained
from the ideal triangle with the ideal vertices $0, 1,\infty \in \QQQ$ by repeated reflection in the edges.
Then $\QQQ$ is identified with the set of the vertices of $\Farey$.
The automorphism group of the Farey tessellation is identified with
$\PGL(2,\ZZ)$ as follows.
For a matrix 
$
A=\begin{pmatrix}
a & b\\
c & d
\end{pmatrix}
$,
consider its action on $\RR^2$ 
by the left multiplication 
$
\begin{pmatrix}
x\\ y
\end{pmatrix}
\mapsto
A
\begin{pmatrix}
x\\ y
\end{pmatrix}
$.
Then $A$ maps a line of a slope $s\in \QQQ$
to a line of slope $s'=(c+ds)/(a+bs)$.
The correspondence $s\mapsto s'$ gives a bijection of the Farey vertices,
which extends to an automorphism of the Farey tessellation.
(When $A\in \SL(2,\ZZ)$, the transformation $z\mapsto (c+dz)/(a+bz)$ of $\bar\HH^2\cup\{\infty\}$
gives the desired automorphism.)
Conversely, every automorphism
of the Farey tessellation $\Farey$ is obtained in this way.

Now suppose $q^2\equiv 1 \pmod p$.
Then, by Lemma \ref{lem:conti-fract-expansion2}, there is an orientation-reversing 
involution of the Farey tessellation which interchanges $\infty$ with $r$
(see Figure \ref{Farey-Symmetry}).
Let $R$ be one of the two matrices in $\GL(2,\ZZ)$ that realises the involution.
Then the linear action of $R$ on $\RR^2$ is orientation-reversing 
and interchanges 
the $1$-dimensional vector subspace of slope $\infty$ 
with that of slope $r$.
Since $R$ normalizes the subgroup $\RotG$ in the affine transformation group of $\RR^2$,
it descends to an orientation-reversing involution on $\Conways$
which swaps $\delta_{\infty}$ with $\delta_r$
and $\alpha_{\infty}$ with $\alpha_r$.
Hence $R$ induces an orientation-preserving involution, $g_R$, of
$(S^3,K(r))$ which interchanges $(\Ball, t(\infty))$ with 
$(\Ball, t(r))$.
This gives one of the extra symmetries of $(S^3,K(r))$ described in Section \ref{sec:two-bridge-link-group}.
We note that the generators $f$ and $h$ of 
the $(\ZZ_2)^2$-action on $(S^3,K(r))$ of 
the characteristic subgroup come from 
the $\pi$-rotations of $\RR^2$ about the point $(1/2,0)$ and $(0,1/2)$, respectively.
The extra symmetry $g_R$ and the $(\ZZ_2)^2$-action
generate a finite group action on $(S^3,K(r))$
which gives a realisation of $\Sym^+(S^3,K(r))$.

\medskip

Case 1.
Suppose that the mutually equivalent conditions 
in Lemma \ref{lem:conti-fract-expansion2}(1) hold,
namely, $n=2n_0+1$ is odd, $a_{n_0+1}=2a_{n_0+1}'$ is even, and
$(a_1,\cdots a_n)$ is symmetric.
Then the reflection on $\Farey$ in the Farey edge, 
spanned by the vertices $s_1=q_1/p_1$ and $s_2=q_2/p_2$ 
in Figure \ref{Farey-Symmetry}(1),
interchanges $\infty$ and $r$.
Thus the matrix $\pm R$ is the conjugate of the matrix
$R_0=
\begin{pmatrix}
1 & 0\\
0 & -1
\end{pmatrix}$, 
which induces the reflection on $\Farey$
in the Farey edge spanned by $0/1$ and $1/0$,
by the the matrix 
$
A=
\begin{pmatrix}
p_1 & p_2\\
q_1 & q_2
\end{pmatrix}
$,
which maps the vectors
$
\begin{pmatrix}
1\\ 0
\end{pmatrix}
$
and 
$
\begin{pmatrix}
0\\ 1
\end{pmatrix}
$
to the vectors
$
\begin{pmatrix}
p_1\\ q_1
\end{pmatrix}
$
and 
$
\begin{pmatrix}
p_2\\ q_2
\end{pmatrix}
$,
respectively.
Hence we have
\[
\pm R= AR_0A^{-1}=
\begin{pmatrix}
p_1q_2+p_2q_1 & -2p_1p_2\\
2q_1q_2 & -p_1q_2-p_2q_1
\end{pmatrix}.
\]
Thus
\[
\begin{pmatrix}
p\\
q
\end{pmatrix}
=
\pm
R
\begin{pmatrix}
0\\
1
\end{pmatrix}
=
\pm \begin{pmatrix}
-2p_1p_2\\
-p_1q_2-p_2q_1
\end{pmatrix}.
\]
Hence we have $p=2p_1p_2$.

Observe that the involution on $\Conways$ induced by $R_0$
is the reflection in the circle that is the union of
the four arcs in $\delta_{0/1}$ and $\delta_{1/0}$.
Hence the restriction of the extra involution $g_R$ to $\Conways$
is the the reflection in the circle that is the union 
the four arcs in $\delta_{q_1/p_1}$ and $\delta_{q_2/p_2}$.
This confirms that $g_R$ is a strong inversion of the $2$-component link $K(r)$,
and the four extra meridian pairs 
that arise from the strong inversion $g_R$
are represented by 
the four arcs in $\delta_{q_1/p_1}$ and $\delta_{q_2/p_2}$, respectively
(cf. Figure \ref{extra-symmetry}(2a), (2b)).
Let $\{m_1,m_2\}$ be one of the four extra meridian pairs.
Then it is represented by an arc on $\PConway$ of slope $q_i/p_i$ for $i=1$ or $2$.
This implies that the element $\omega(m_1,m_2)\in H_1(M(K(r)))<\Piorb(r)$ is represented 
the simple loop $\alpha_{q_i/p_i}$ for $i=1$ or $2$.
Hence we have
\[
\omega(m_1,m_2) = [\alpha_{q_i/p_i}] 
=p_i[\tilde\alpha_0]+q_i[\tilde\alpha_{\infty}]
=p_i[\tilde\alpha_0]
\in
H_1(M(K(r))
\cong \langle \tilde\alpha_{0} \ | \
p[\tilde\alpha_0]\rangle.
\]
If $K(r)$ is hyperbolic, then $q \not\equiv \pm1 \pmod{p}$ and so
we can see that $1< p_i < p=2p_1p_2$
(see Figure \ref{Farey-Symmetry}(1)). 
So, $\omega(m_1,m_2)$ is not a generator of $H_1(M(K(r)))$.
Hence, none of the extra meridian pairs is a generating pair of $G(r)$.
This completes the proof of Proposition \ref{prop:extra-meridian-pair}
for the case where the condition (1) in Lemma \ref{lem:conti-fract-expansion2}
is satisfied.

\medskip

Case 2.
Suppose that the mutually equivalent conditions 
in Lemma \ref{lem:conti-fract-expansion2}(2) hold,
namely,
$n=2n_0+1$ is odd, $a_{n_0+1}=2a_{n_0+1}'+1$ is odd, and
$(a_1,\cdots a_n)$ is symmetric.
Then the reflection on $\Farey$ in the geodesic, 
joining the vertices $s_1=q_1/p_1$ and $s_2=q_2/p_2$
in Figure \ref{Farey-Symmetry}(2),
interchanges $\infty$ and $r$.
Thus the matrix $\pm R$ realising the symmetry is conjugate to the matrix 
$
R_0=
\begin{pmatrix}
0 & 1\\
1 & 0
\end{pmatrix}
$,
which induces the reflection on $\Farey$ in 
the geodesic joining the vertices $1/1$ and $-1/1$,
by the matrix 
$A$
that maps the vectors
$
\begin{pmatrix}
1\\ 1
\end{pmatrix}
$
and 
$
\begin{pmatrix}
1\\ -1
\end{pmatrix}
$
to the vectors
$
\begin{pmatrix}
p_1\\ q_1
\end{pmatrix}
$
and 
$
\begin{pmatrix}
p_2\\ q_2
\end{pmatrix}
$,
respectively.
The matrix $A$ is given by
\[
A=
\frac{1}{2}
\begin{pmatrix}
p_1+p_2 & p_1-p_2\\
q_1+q_2 & q_1-q_2
\end{pmatrix},
\]
and hence we have
\[
\pm R=AR_0A^{-1}=
\begin{pmatrix}
\frac{-1}{2}(p_1q_2+p_2q_1) & p_1p_2\\
-q_1q_2 & \frac{1}{2}(p_1q_2+p_2q_1)
\end{pmatrix}.
\]
Thus
\[
\begin{pmatrix}
p\\
q
\end{pmatrix}
=
\pm
R
\begin{pmatrix}
0\\
1
\end{pmatrix}
=
\pm
\begin{pmatrix}
p_1p_2\\
\frac{1}{2}(p_1q_2+p_2q_1)
\end{pmatrix}.
\]
Hence we have $p=p_1p_2$.

Observe that the involution on $\Conways$ induced by $R_0$
is the reflection in the circle that is the union 
of an arc of slope $1/1$ and an arc of slope $-1/1$.
Hence the restriction of the extra involution $g_R$ to $\Conways$
is the the reflection in the circle that is the union 
of an arc of slope $q_1/p_1$ and an arc of slope $q_2/p_2$.
Hence $\Fix(g_R)$ is a circle which intersects $K(r)$ in two points.
This confirms that $g_R$ is a strong inversion 
if and only if $K(r)$ is a knot 
(cf. Figures \ref{extra-symmetry}(1) and (3)).
So, we assume $K(r)$ is a knot.
Then the two extra meridian pairs that arise from the strong inversion $g_R$
are represented by 
an arc of slope $q_1/p_1$ and an arc of slope $q_2/p_2$, respectively.

If an extra meridian pair $\{m_1,m_2\}$ corresponds to an arc of slope $q_i/p_i$,
then the element $\omega(m_1,m_2)\in H_1(M(K(r)))<\Piorb(r)$ is represented 
the simple loop $\alpha_{q_i/p_i}$.
Hence we have
\[
\omega(m_1,m_2) = [\alpha_{q_i/p_i}] 
=p_i[\tilde\alpha_0]+q_i[\tilde\alpha_{\infty}]
=p_i[\tilde\alpha_0]
\in
H_1(M(K(r))
\cong \langle \tilde\alpha_{0} \ | \
p[\tilde\alpha_0]\rangle.
\]
We can observe that if $K(r)$ is hyperbolic then $1< p_i < p_1p_2$
(see Figure \ref{Farey-Symmetry}(2)).
So $\omega(m_1,m_2)$ is not a generator of $H_1(M(K(r))$.
This completes the proof of Proposition \ref{prop:extra-meridian-pair}
for the case where the condition (2) in Lemma \ref{lem:conti-fract-expansion2}
is satisfied.

\medskip
Thus we have proved that all possible parabolic generating pairs
listed in Propositions \ref{prop:strong-invertion_1} and \ref{prop:strong-invertion_2},
except for the upper/lower meridian pairs,
are not generating pairs.
This completes the proof of Theorem \ref{main-theorem2}(1).

\section{Parabolic generating pairs of Heckoid groups}
\label{sec:Heckoid-orb}

In this section, we prove Theorem \ref{main-theorem2}(2)
which asserts 
that every Heckoid group has a unique parabolic generating pair
up to equivalence.

We first give an explicit description of the weighted graph
in Figure \ref{generating-pairs} 
representing the Heckoid orbifolds
(cf. \cite[Definition 3.4]{AOPSY}, \cite[Section 5]{Lee-Sakuma_2013}).

\begin{definition}
\label{def:Heckoid-orbifold}
{\rm
(1)
For $r\in\QQ$ and for an integer $n\ge 2$,
$\orbm_{0}(r;n)$ denotes the orbifold pair determined by the weighted graph
$(S^3,K(r)\cup\tau_-,w_0)$, where $w_0$ is given 
\[
w_0(K(r))=\infty, \quad w_0(\tau_-)=n. 
\]

(2)
For $r=q/p\in\QQ$ with $p$ odd and an odd integer $m\ge 3$,
$\orbm_{1}(r;m)$ denotes the orbifold pair determined by the weighted graph
$(S^3,K(r)\cup\tau_-,w_1)$, where $w_1$ is given by the following rule.
Let $J_1$ and $J_2$ be the edges of the graph $K(r)\cup \tau_-$  distinct from $\tau_-$.
Then
\[
w_1(J_1)=\infty, \quad w_1(J_2)=2, \quad w_1(\tau_-)=m.
\]

(3)
For $r\in\QQ$ and an odd integer $m\ge 3$,
$\orbm_{2}(r;m)$ denotes the orbifold pair determined by the weighted graph
$(S^3,K(r)\cup\tau_+\cup\tau_-,w_2)$, where $w_2$ is given by the following rule.
Let $J_1$ and $J_2$ be unions of two mutually disjoint edges of the graph
$K(r)\cup\tau_+\cup\tau_-$ distinct from $\tau_{\pm}$,
such that
both $J_1$ and $J_2$ are preserved by the \lq vertical involution' $f$ of $K(r)$.
Then 
\[
w_2(J_1)=\infty, \quad w_2(J_2)=2, \quad w_2(\tau_+)=2, \quad w_2(\tau_-)=m.
\]
}
\end{definition}

In Definition \ref{def:Heckoid-orbifold}(3),
the {\it vertical involution} of $(S^3,K(r))$
is an involution of $(S^3,K(r))$ which preserves 
both $(\threeball,t(\infty))$ and $(\threeball,t(r))$
and whose restriction to the common boundary $\Conways$
is given by the $\pi$-rotation of $\RR^2$ about the point $(1/2,0)$
(see \cite[Figure 4]{AOPSY}).

We first calculate the orientation-preserving isometry group of 
the Heckoid orbifolds.
To this end, we recall the spherical dihedral orbifold $\OO(r;d_+,d_-)$
introduced in \cite[Theorem 4.1]{AOPSY},
that is represented by a weighted graph  
$(S^3,K(r)\cup\tau_+\cup\tau_-,w)$,
where $d_+$ and $d_-$ are mutually prime positive integers $d_+$ and $d_-$,
and $w$ is given by the rule.
\[
w(K(r))=2, \quad w(\tau_+)=d_+, \quad w(\tau_-)=d_-.
\]

\begin{proposition}
\label{prop:isometry-group-Heckoid}
The orientation-preserving isometry group of the
Heckoid orbifolds are given by the following fromula.
\[
\Isom^+(\orbm_{0}(r;n))  \cong  (\ZZ_2)^2,\quad
\Isom^+(\orbm_{1}(r;m)) \cong  \ZZ_2, \quad
\Isom^+(\orbm_{2}(r;m))  \cong  \ZZ_2.
\]
\end{proposition}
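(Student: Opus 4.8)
The plan is to compute each of the three isometry groups by realising every Heckoid orbifold $\orbm$ as the orbifold pair determined by an explicit weighted graph on $(S^3,K(r))$ built from $K(r)$ and the tunnels $\tau_\pm$, and then to use Mostow–Prasad rigidity together with the orbifold theorem to identify $\Isom^+(\orbm)$ with the orientation-preserving symmetry group of the underlying weighted graph. Since $K(r)$ is a $2$-bridge link, the symmetry group of the weighted graph is a subgroup of $\Sym^+(S^3,K(r))\cong\Isom^+(S^3-K(r))$, which was completely described in Proposition~\ref{prop:isometry-group}; the task reduces to determining which symmetries in that group respect the extra decoration (the singular weights on the added tunnels). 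Concretely, an orientation-preserving isometry of $\orbm$ must permute the edges of $\Sigma$ preserving the weight function $w$; in particular it must preserve the parabolic locus $\Sigma_\infty$, and it must carry the weight-$n$ (resp.\ weight-$m$) tunnel $\tau_-$ to a tunnel of the same weight. First I would note that the characteristic subgroup $(\ZZ_2)^2<\Sym^+(S^3,K(r))$ preserves both $\tau_+$ and $\tau_-$ setwise and, by the discussion following Proposition~\ref{prop:isometry-group}, is realised by the concrete involutions $f$ and $h$ of Figure~\ref{Z2+Z2-symmetry}.

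For $\orbm_0(r;n)$, whose weighted graph is $(S^3,K(r)\cup\tau_-,w_0)$ with $w_0(K(r))=\infty$, $w_0(\tau_-)=n$: any orientation-preserving isometry must preserve $K(r)$ (the parabolic locus, here the two meridional annuli, or the single one when $r$ is integral — but hyperbolicity forces $q\not\equiv\pm1$) and must preserve $\tau_-$ setwise, since it is the unique singular edge. Hence the isometry group injects into the stabiliser of $\tau_-$ in $\Sym^+(S^3,K(r))$. When $q^2\not\equiv 1\pmod p$ this stabiliser is already all of $(\ZZ_2)^2$; when there are extra symmetries, those extra elements interchange $\tau_+$ and $\tau_-$ (Proposition~\ref{prop:isometry-group}) and therefore do not preserve $\tau_-$, so the stabiliser is still exactly $(\ZZ_2)^2$. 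Conversely the $(\ZZ_2)^2$-action visibly preserves $\tau_-$ with its weight, so it acts on $\orbm_0(r;n)$; by the orbifold theorem this action is realised by isometries. This gives $\Isom^+(\orbm_0(r;n))\cong(\ZZ_2)^2$.

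For $\orbm_1(r;m)$ and $\orbm_2(r;m)$ the same strategy applies but now the weighting on $\Sigma_s$ is \emph{asymmetric}: besides $\tau_-$ carrying the weight $m$, exactly one of the two "halves" of $K(r)$ (or one of the two pairs of edges) carries weight $\infty$ and the other weight $2$. I would argue that an orientation-preserving isometry must preserve this asymmetric labelling: it must send the weight-$\infty$ part of $\Sigma$ to itself, the weight-$2$ part to itself, and $\tau_-$ to itself. Among the generators $f,h$ of the characteristic $(\ZZ_2)^2$, precisely one (the "horizontal" involution $h$, which is the relevant strong inversion) preserves this labelling, while $f$ interchanges the two halves and hence swaps the weight-$\infty$ edge with the weight-$2$ edge (in case $\orbm_1$) — so it is \emph{not} a symmetry of the weighted graph. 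The extra symmetries, when present, interchange $\tau_+$ and $\tau_-$ and are likewise excluded. Thus $\Isom^+(\orbm_i(r;m))\cong\ZZ_2$, generated by $h$, for $i=1,2$; that $h$ does act, and acts by isometries, again follows from the orbifold theorem.

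The main obstacle I anticipate is the bookkeeping in the last step: one must check carefully, using the explicit pictures of Figures~\ref{Z2+Z2-symmetry} and \ref{extra-symmetry} together with the defining rules in Definition~\ref{def:Heckoid-orbifold}, that the "vertical" involution $f$ really does swap the two edges $J_1,J_2$ that receive weights $\infty$ and $2$, and that no element of $\Sym^+(S^3,K(r))$ other than $h$ fixes all of $\tau_-$, $J_1$, $J_2$ simultaneously. For $\orbm_2(r;m)$ there is the further subtlety that $\tau_+$ also appears with weight $2$ and must be preserved, which is why the extra symmetries interchanging $\tau_+\leftrightarrow\tau_-$ are ruled out even though both are labelled by small integers — here $\tau_+$ has weight $2$ but $\tau_-$ has weight $m\neq 2$, so the interchange is forbidden. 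Once these combinatorial checks are done, invoking \cite[Proposition 12.6]{AOPSY} (as the excerpt suggests) to pin down $\Sym^+(S^3,K(r))$ and the orbifold theorem to upgrade graph symmetries to isometries closes the argument.
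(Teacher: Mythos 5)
Your overall strategy is genuinely different from the paper's. The paper does not work inside $\Sym^+(S^3,K(r))$ at all: it performs the order-$2$ orbifold surgery (replacing the weight $\infty$ by $2$) to pass from the Heckoid orbifold $\orbm$ to a \emph{spherical} dihedral orbifold $\OO(r;d_+,d_-)$, geometrizes the induced finite action there by the orbifold theorem, and then reads off $\Isom^+(\orbm)$ as the subgroup of the already-computed group $\Isom^+(\OO(r;d_+,d_-))$ preserving the edges coming from the parabolic locus. Your route — identifying $\Isom^+(\orbm)$ with the stabiliser of the weighted graph inside $\Sym^+(S^3,K(r))$ — is plausible in outline, but the identification itself is exactly the step you do not justify. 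Mostow--Prasad rigidity does not apply as stated: the Heckoid orbifolds are geometrically finite of \emph{infinite} volume, so you need rigidity for geometrically finite pared orbifolds to get surjectivity (every weighted-graph symmetry is realised by an isometry), and you need a separate argument for injectivity of $\Isom^+(\orbm)\to\Sym^+(S^3,K(r))$ — a priori an isometry of the orbifold could extend to a diffeomorphism of $(S^3,K(r))$ that is pairwise isotopic to the identity. (This can be repaired by extending the whole isometric action equivariantly over $N(\Sigma_\infty)$ and invoking Borel's theorem on finite group actions on aspherical manifolds, as the paper does for the $(\ZZ_2)^2$-action in Section 4, but as written your ``Hence the isometry group injects into the stabiliser'' is a non sequitur: preserving $\tau_-$ gives you where the image lands, not that the map is injective.) Without injectivity you only bound the image, so hidden isometries are not excluded.

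There is also a concrete error in your combinatorial check for $\orbm_2(r;m)$. The strong inversion $h$ meets each component of $K(r)$ in exactly two points (one endpoint of $\tau_+$ and one of $\tau_-$), so it restricts to each component as a reflection with two fixed points and therefore \emph{exchanges} the two complementary arcs on each component; since $J_1$ and $J_2$ each contain exactly one arc from each component, $h$ swaps $J_1$ and $J_2$ and hence does \emph{not} preserve the weights $\infty$ and $2$. By contrast $f$ preserves $J_1$ and $J_2$ by the very definition of the decomposition in Definition~\ref{def:Heckoid-orbifold}(3) (and $fh$ again swaps them). So the surviving involution is $f$, not $h$: the group is still $\ZZ_2$, but your claim that it is ``generated by $h$, the relevant strong inversion'' is false for $i=2$, and the check you yourself propose (``no element other than $h$ fixes $\tau_-$, $J_1$, $J_2$ simultaneously'') would fail for the element you name. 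This is not merely cosmetic: the fact that the unique involution of $\orbm_2(r;m)$ is $f$, whose fixed-point set is disjoint from the parabolic locus, is precisely what forces the inverting elliptic element into $\pi_1(\orbm_2(r;m))$ in Case 3 of Section~\ref{sec:Heckoid-orb}. (For $\orbm_1(r;m)$ your analysis is correct: there the endpoints of $\tau_+$ and $\tau_-$ alternate around the knot, $h=h_+$ preserves each $J_i$, and $f$ and $h_-$ swap them.)
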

\begin{proof}
Let $\orbm$ be a Heckoid orbifold.
Then the isometry group $\Isom^+(\orbm)$ is finite, because 
the hyperbolic structure of $\orbm$
is geometrically finite.
Let $\orbo$ be the orbifold obtained from $\orbm$
by the order $2$ orbifold surgery,
namely the orbifold represented by the weighted graph
obtained from that representing the Heckoid orbifold $\orbm$,
by replacing the label $\infty$ with $2$
(cf. \cite[Definition 6.3]{AOPSY}).
Then $\OO$ is the spherical dihedral orbifold
$\OO(r;1,n)$, $\OO(r;1,m)$ or $\OO(r;2,m)$
according as
$\orbm=\orbm_0(r;n)$, $\orbm_1(r;m)$, or $\orbm_2(r;m)$.
The action of $\Isom^+(\orbm)$ descends
faithfully to an orientation-preserving finite group action
on the spherical orbifold $\OO$.
By the orbifold theorem (see \cite{BLP}), we may assume 
the action is an isometric action on the spherical orbifold.
Thus $\Isom^+(\orbm)$ is identified with the subgroup
of the spherical isometry group $\Isom^+(\OO)$ consisting of those isometries
which map the union of the edges of the singular set
that arise from the parabolic locus of $\orbm$.
The orientation-preserving isometry groups of the spherical dihedral orbifolds are given by \cite[Proposition 12.6]{AOPSY},
which together with the above fact enables us to obtain the desired isomorphisms.
\end{proof}

Let $\orbm=(M_0,P)$ be a Heckoid orbifold,
and identify $(M_0,P)$ with a subspace of the hyperbolic orbifold 
$M=\HH^3/\Gamma$ ($\Gamma=\pi_1(\orbm)$)
by the isomorphism (\ref{formula:convex-core}) in Section \ref{sec:intro}.
Let $\{\alpha, \beta\}$ be a parabolic generating pair of 
the Heckoid group $\pi_1(\orbm)$,
and let $h$ be the inverting elliptic element for $\{\alpha, \beta\}$,
i.e., the $\pi$-rotation about the geodesic $\eta$ joining the parabolic fixed points
of $\alpha$ and $\beta$.
Let $\bar\eta$ be the image of $\eta$ in $M$.
Then, by Proposition \ref{prop:strong-involution_2}, 
$\{\alpha,\beta\}$ is represented by $\bar\eta\cap M_0$.

\begin{figure}
	\centering
	\includegraphics[width=10cm]{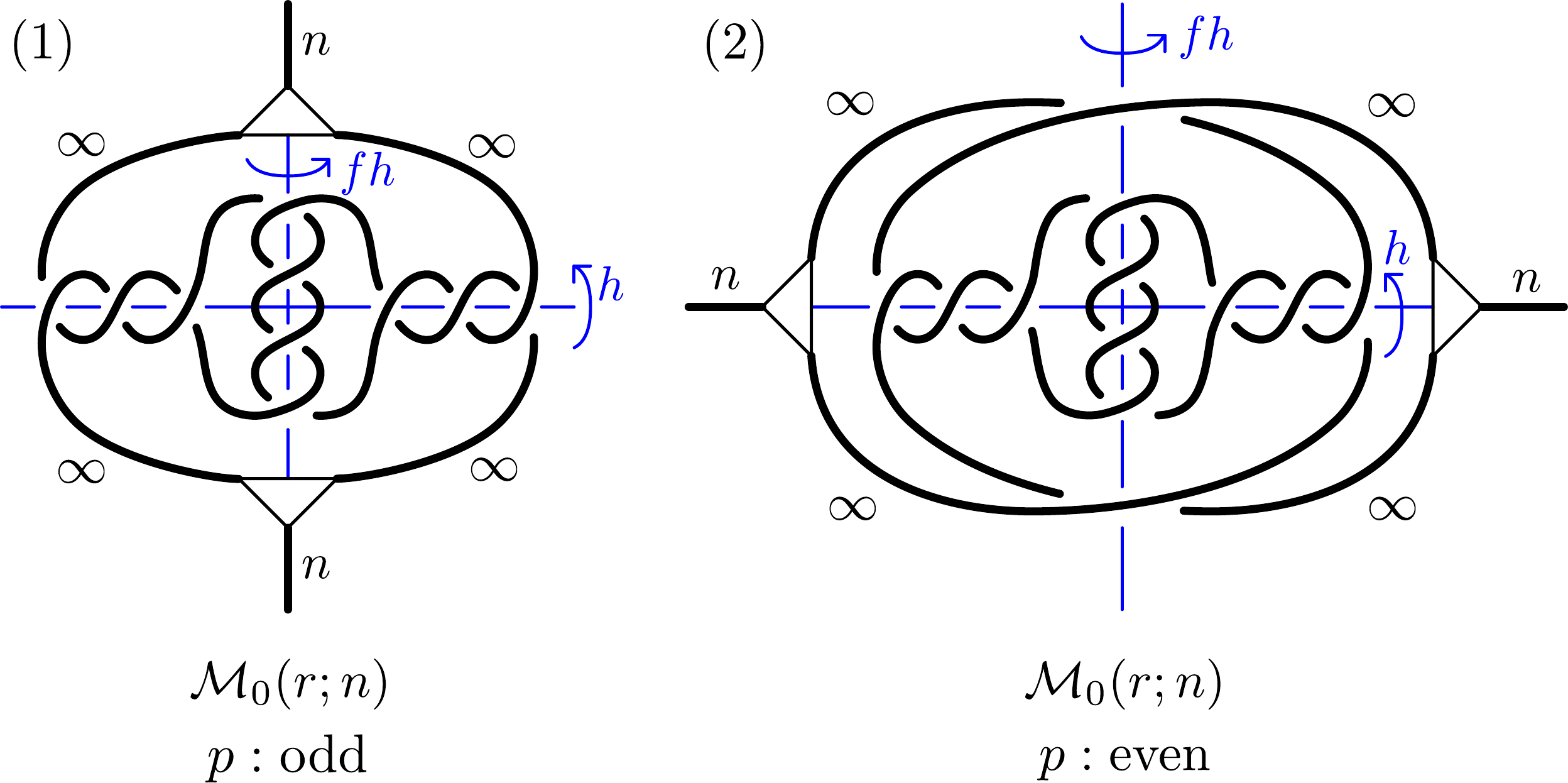}
	\caption{The inverting elliptic element for any generating pair
	induces the involution $h$ of $\orbm_0(r;n)$ in the figure. 
	According to whether $p$ is odd (1) or even (2),
	$\Fix(h)$ contains two or one geodesic paths joining 
	the parabolic locus $P$ to itself.}
	\label{Symmetry-Heckoid0}
\end{figure}

\smallskip

Case 1. $\orbm=\orbm_0(r;n)$ with $r=q/p$ and $n\ge 2$.
Then $\Isom^+(\orbm)\cong (\ZZ_2)^2$ 
is illustrated in
Figure \ref{Symmetry-Heckoid0}(1) or (2)
according to whether $p$ is odd or even.
Since the singular set of $\orbm$ consists of a single arc of index $n$
which joins the two components of $\closure(\partial M_0-P)$,
the inverting elliptic element $h$ does not belong to $\Gamma$,
and so it descends to an isometric involution of $\orbm$,
which we continue to denote by $h$.
Since its fixed point set
$\Fix(h)$ contains the geodesic path $\bar\eta$
joining the parabolic loci $P_{\alpha}$ and $P_{\beta}$,
the involution $h$ on $\orbm$ 
must be equivalent to the involution $h$ in Figure \ref{Symmetry-Heckoid0}(1) or (2)
according to whether $p$ is odd or even.

\smallskip

Subcase 1.1. $p$ is odd.
By Proposition \ref{prop:classification-fuchsian-case},
we have only to treat the case $p\ge 3$.
Note that $\Fix(h)$ of the involution $h$ in Figure \ref{Symmetry-Heckoid0}(1)
contains two geodesic paths which join 
the parabolic locus $P$ to itself,
namely $\tau_+$ and $\tau_+^c$,
the images of the tunnels $\tau_+$ and $\tau_+^c$ for $K(r)$
in $\orbm=\orbm_0(r;n)$
(see Figure \ref{Symmetry-Heckoid0}(1)).
If $\bar\eta=\tau_+$,
then $\{\alpha,\beta\}$  is equivalent to the standard 
parabolic generating pair in Figure \ref{generating-pairs}(2).
We show that $\bar\eta$ cannot be $\tau_+^c$.
Suppose on the contrary that $\bar\eta=\tau_+^c$.
Then the natural epimorphism
from $\pi_1(\orbm_0(r;n))$ onto the $2$-bridge knot group $G(r)$
maps the pair $\{\alpha,\beta\}$ to the long upper meridian pair of $G(r)$.
Since $p\ge 3$, the long upper meridian pair of $G(r)$ is not a generating pair
by Lemma \ref{lem:invariant-omega} and Proposition \ref{prop:long-meridian-pair}(1).
This contradicts the assumption that
$\{\alpha,\beta\}$ is a generating pair of $\pi_1(\orbm_0(r;n))$. 
Hence, $\pi_1(\orbm_0(q/p;n))$ with $p$ odd
has a unique parabolic generating pair.

\smallskip
Subcase 1.2. $p$ is even.
Then we can see from Figure \ref{Symmetry-Heckoid0}(2) that
$\tau_+$ is the unique geodesic path
contained in $\Fix(h)$ 
of the involution $h$ in Figure \ref{Symmetry-Heckoid0}(2)
which joins the parabolic locus $P$ to itself.
Hence the pair $\{\alpha,\beta\}$ is represented by $\tau_+$,
and so it is equivalent to
the standard parabolic generating pair in Figure \ref{generating-pairs}(2).
Hence, $\pi_1(\orbm_0(q/p;n))$ with $p$ even
has a unique parabolic generating pair.

\begin{figure}
	\centering
	\includegraphics[width=4cm]{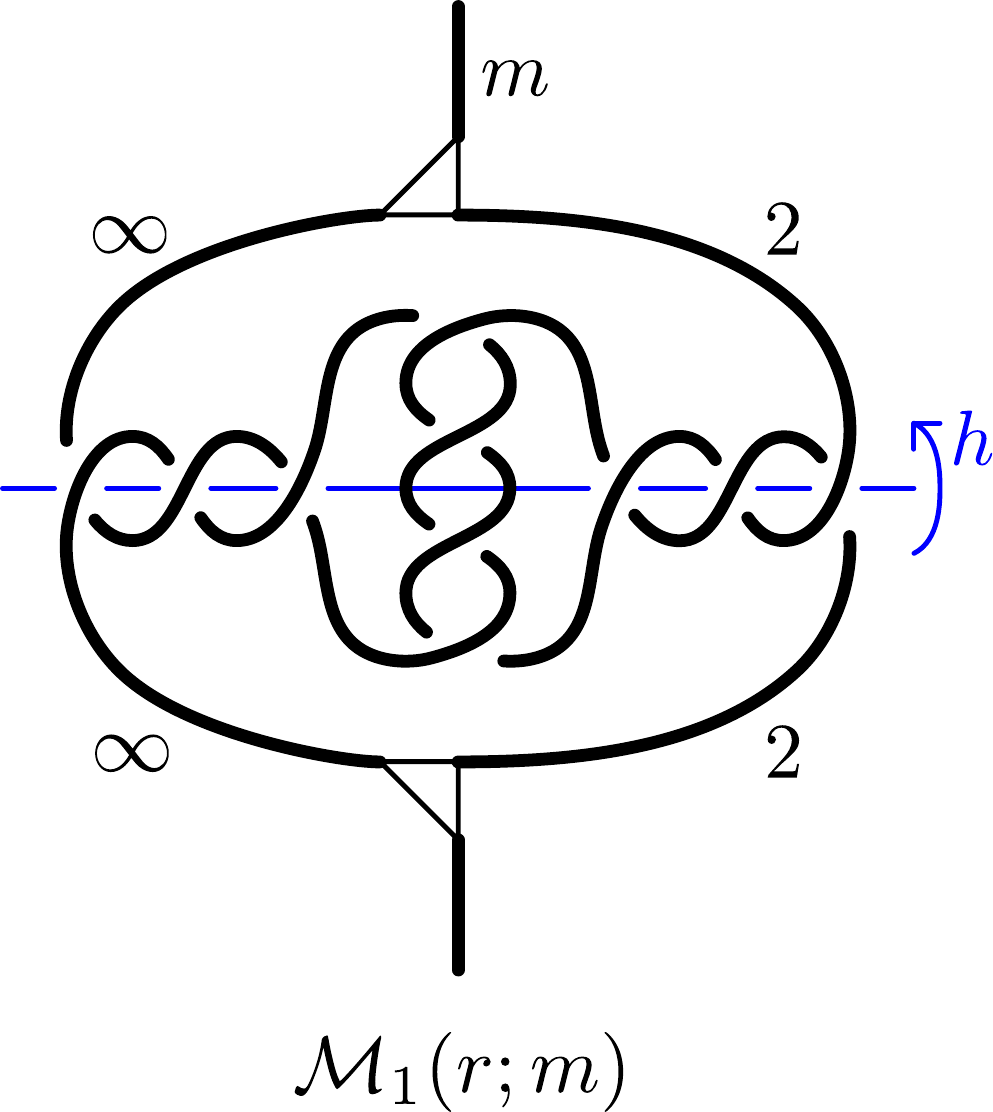}
	\caption{The inverting elliptic element for any generating pair
	induces the involution $h$ of the orbifold $\orbm_1(r;m)$ in the figure. 
	$\Fix(h)$ contains two geodesic paths joining 
	the parabolic locus $P$ to itself.}
	\label{Symmetry-Heckoid1}
\end{figure}

\smallskip

Case 2. $\orbm=\orbm_1(r;m)$ with $r=q/p$ ($p$: odd) and $m\ge 3$ odd.
Then the singular set of $\orbm$ contains a unique edge
of even index (actually $2$) and it joins the two components of $\closure(\partial M_0-P)$.
Hence the inverting elliptic element $h$ 
does not belong to $\Gamma$,
and so it descends to an isometric involution of $\orbm$,
which we continue to denote by $h$.
The involution $h$ generates $\Isom^+(\orbm)\cong \ZZ_2$ 
and it is as illustrated 
Figure \ref{Symmetry-Heckoid1}.
Observe that $\Fix(h)$ 
of the involution $h$ in Figure \ref{Symmetry-Heckoid1}
contains two geodesic paths which joins 
the parabolic locus $P$ to itself,
namely $\tau_+$ and $\tau_+^c$. 
If $\bar\eta=\tau_+$,
then $\{\alpha,\beta\}$  is equivalent to the standard 
parabolic generating pair in Figure \ref{generating-pairs}(3).
We show that $\bar\eta$ cannot be $\tau_+^c$.
Suppose on the contrary that $\bar\eta=\tau_+^c$.
Note that the natural epimorphism from $\pi_1(\orbm_1(r;m))$ 
onto the $\pi$-orbifold group $O(r)$ 
maps the pair $\{\alpha,\beta\}$
to $\{m_1, m_2m_1m_2^{-1}\}$,
where $\{m_1,m_2\}$ is the image in $O(r)$ of the long upper meridian pair of $G(r)$.
By Lemma \ref{lem:invariant-omega} and Proposition \ref{prop:long-meridian-pair}(1),
$\{m_1,m_2\}$ cannot generate $O(r)$.
(Here we use the fact that the conclusion of Lemma \ref{lem:invariant-omega}(2)
holds under the weaker condition that the image of $\{m_1,m_2\}$ in $O(K)$
generates $O(K)$.)
So $\{m_1, m_2m_1m_2^{-1}\}$ cannot generate $O(r)$, and 
hence $\{\alpha,\beta\}$ is not a generating pair, a contradiction.
Hence, $\pi_1(\orbm_1(r;m))$ has a unique parabolic generating pair.

\begin{figure}
\centering
\includegraphics[width=10cm]{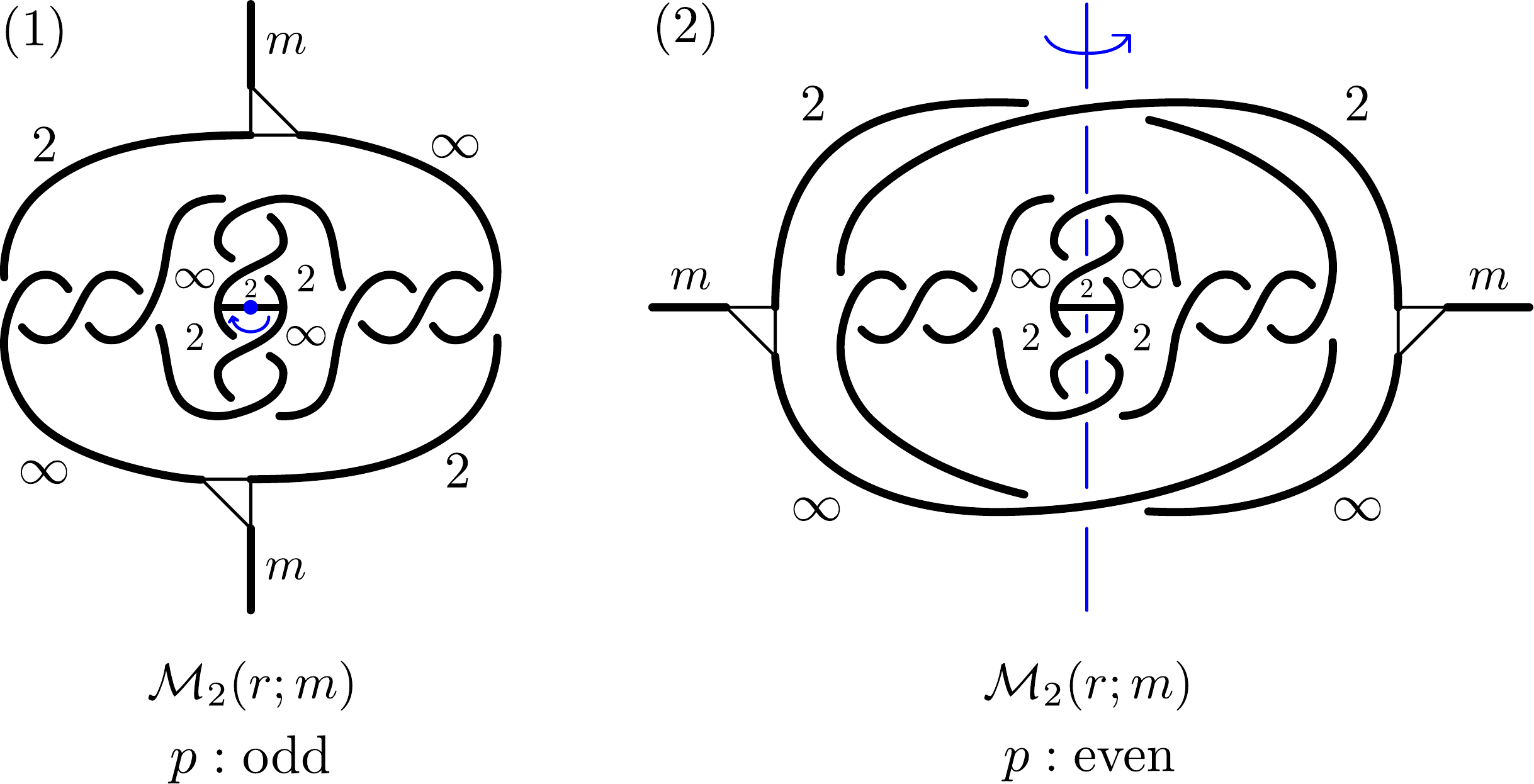}
\caption{$\orbm_2(r;m)$ admits a unique orientation-preserving isometric involution.
Its fixed point set is disjoint from the parabolic locus $P$. 
Hence the inverting elliptic element $h$
must be contained in $\pi_1(\orbm_2(r;m))$.}
\label{Symmetry-Heckoid2}
\end{figure}

\smallskip

Case 3. $\orbm=\orbm_2(r;m)$ with $m\ge 3$ odd.
Then $\Isom^+(\orbm)\cong \ZZ_2$ 
is as illustrated 
Figure \ref{Symmetry-Heckoid2}(1) or (2)
according to whether $p$ is odd or even.
Thus the fixed point set of the unique orientaion-preserving 
involution of $\orbm_2(r;m)$
does not contain a geodesic path connecting
the parabolic locus $P$ to itself, and so
the inverting elliptic element $h$ must belong to the Heckoid group
$\pi_1(\orbm_2(r;m))$.
Since $m$ is odd, $\bar \eta$ must be the upper tunnel $\tau_+$.
Hence the pair $\{\alpha,\beta\}$  is equivalent to
the standard parabolic generating pair in Figure \ref{generating-pairs}(4).
Thus $\pi_1(\orbm_2(r;m))$ has a unique parabolic generating pair.

\section{Applications to epimorphisms between $2$-bridge knot groups
and degree one maps between $2$-bridge link exteriors}
\label{sec:application}

In \cite{Ohtsuki-Riley-Sakuma},
Ohtsuki, Riley, and Sakuma gave
a systematic construction of epimorphisms between $2$-bridge link groups.
In this section, we show that 
all epimorporphisms between
$2$-bridge knot groups essentially arise from their construction.

We first recall the result of \cite{Ohtsuki-Riley-Sakuma}.
Let $\RGP{r}$ be the group of automorphisms of the Farey tessellation 
$\Farey$ generated by the reflections in the Farey edges 
with an endpoint $r$.
It should be noted that $\RGP{r}$
is isomorphic to the infinite dihedral group
and the region bounded by two adjacent Farey edges
with an endpoint $r$ is a fundamental domain
for the action of $\RGP{r}$ on $\HH^2$.
Let $\RGPP{r}$ be the group generated by $\RGP{\infty}$ and $\RGP{r}$.
When $r\in \QQ - \ZZ$,
$\RGPP{r}$ is equal to the free product $\RGP{\infty}*\RGP{r}$,
having a fundamental domain shown in Figure \ref{fig.fd}.
(Otherwise, $\RGPP{r}$ is equal to $\RGP{\infty}$ or the group
generated by the reflections in all Farey edges
according to whether $r=\infty$ or $r\in\ZZ$.)
It should be noted that Schubert's classification theorem of $2$-bridge links
says that two 2-bridge links $K(r)$ and $K(r')$ are equivalent
if and only if there is an automorphism of $\Farey$
which sends $\{\infty,r\}$ to $\{\infty,r'\}$.
Thus the conjugacy class of the group $\RGPP{r}$
in the automorphism group of $\Farey$ is uniquely determined by
the link $K(r)$.

\begin{figure}
\begin{center}
\includegraphics[width=7cm]{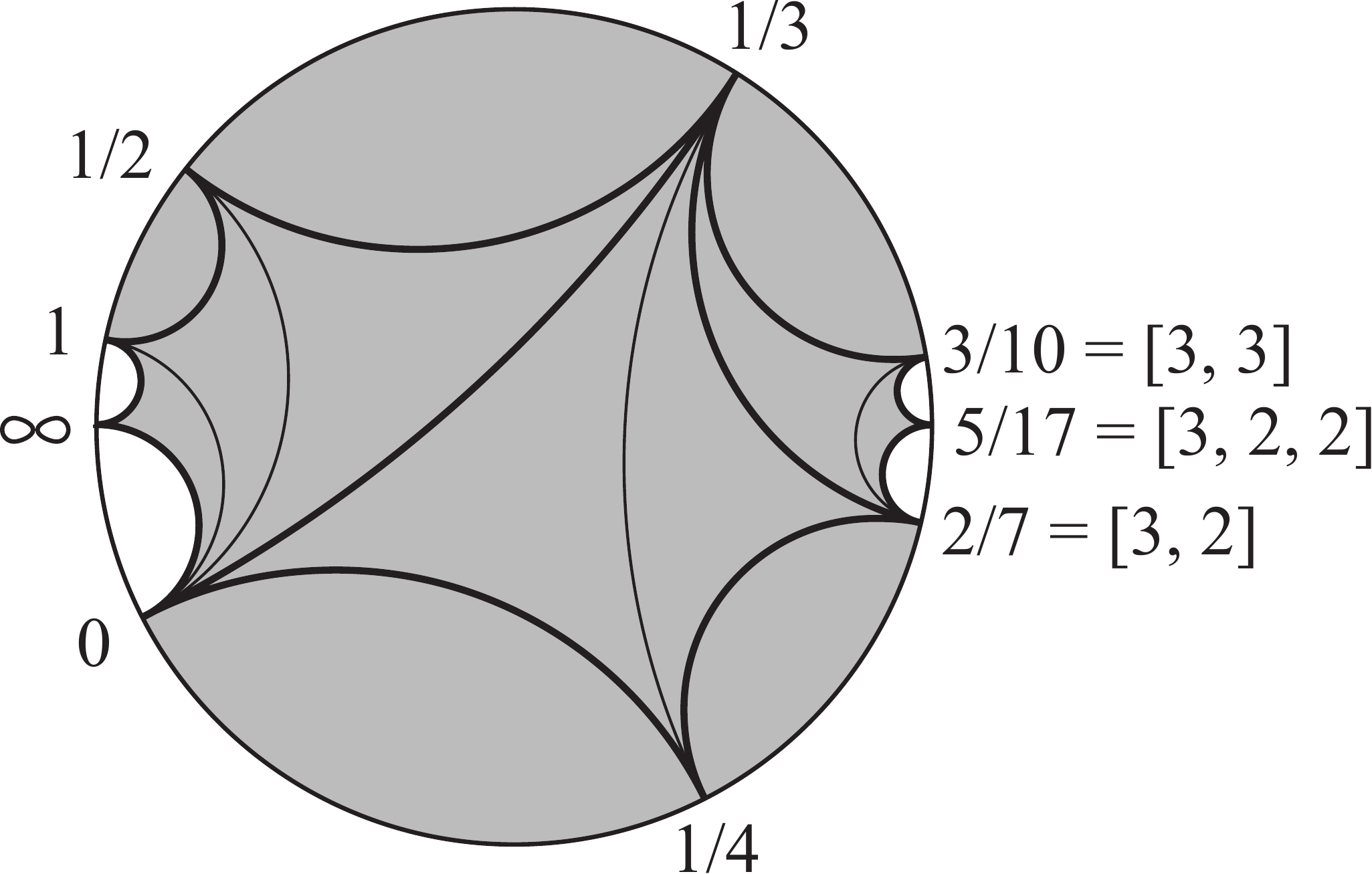}
\end{center}
\caption{
A fundamental domain of $\hat\Gamma_r$ in the
Farey tessellation (the shaded domain) for $r=5/17=[3,2,2]$.}
\label{fig.fd}
\end{figure}

The following result is a consequence of
\cite[Theorem 1.1]{Ohtsuki-Riley-Sakuma}
and the well-known fact that there is a upper-meridian-preserving
isomorphism $G(r)\cong G(r+1)$ for any $r\in\QQQ$
(cf. \cite[Proposition 2.1(1.a)]{AOPSY}).

\begin{enumerate}
\item[(A)]
{\it 
There is an epimorphism from a $2$-bridge link group $G(\tilde r)$ to 
a $2$-bridge link group $G(r)$, if $\tilde r$ or $\tilde r+1$ belongs to
the $\RGPP{r}$-orbit of $r$ or $\infty$.
Moreover, we can choose the epimorphism so that it is upper-meridian-pair-preserving,
namely, it sends the upper meridian pair of $G(\tilde r)$ to that of $G(r)$.
}
\end{enumerate}

Lee and Sakuma proved the following converse to the above result
(\cite[Theorem 2.4]{Lee-Sakuma_2012}),
by solving a certain word problem for the $2$-bridge link groups,
using the small cancellation theory. 

\begin{enumerate}
\item[(B)]
{\it 
There is an upper-meridian-pair-preserving epimorphism
from a $2$-bridge link group $G(\tilde r)$ to 
a $2$-bridge link group $G(r)$
if and only if
$\tilde r$ or $\tilde r+1$
belongs to the $\RGPP{r}$-orbit of $r$ or $\infty$.
}
\end{enumerate}

On the other hand,
it was proved by
Boileau, Boyer, Reid, and Wang that 
any epimorphism from a hyperbolic $2$-bridge knot group $G(\tilde r)$
onto a non-trivial knot group $G(K)$ is induced by
a non-zero degree map $S^3-K(r)\to S^3-K$
and that $K$ is necessarily a two-bridge knot
(\cite[Corollary~1.3]{BBRW}).
This in particular implies the following.

\begin{enumerate}
\item[(C)]
{\it 
Any epimorphism from a hyperbolic $2$-bridge knot group $G(\tilde r)$ to 
a nontrivial $2$-bridge knot group $G(r)$
maps the upper meridian pair of $G(\tilde r)$ to
a pair consisting of peripheral elements.
In particular, if $G(r)$ is also a hyperbolic $2$-bridge knot group,
then the image in $G(r)$ of the upper meridian pair of $G(\tilde r)$
is a parabolic generating pair of $G(r)$.
}
\end{enumerate}

Furthermore, 
Gonzal\'ez-Ac\~una and Ram\'inez~ \cite[Theorem 1.2]{Gonzalez-Raminez1} 
completely determined the 2-bridge knot groups that have epimorphisms
onto a $(2,p)$ torus knot group $G(1/p)$.
Their result can be reformulated as follows.

\begin{enumerate}
\item[(D)]
{\it 
There is 
an epimorphism from a $2$-bridge knot group $G(\tilde r)$ to 
a $(2,p)$ torus knot group $G(r)$ with $r=1/p$ ($p>1$ odd)
if and only if 
$\tilde r$ or $\tilde r+1$
belongs to the $\hat\Gamma_r$-orbit of $r=1/p$ or $\infty$.
}
\end{enumerate}

In fact, if there is an epimorphism $\varphi:G(\tilde r)\to G(1/p)$, then
the composition of $\varphi$ with the natural epimorphism 
$G(1/p)\to \ZZ_2*\ZZ_p$ determines an epimorphism
$G(\tilde r)\to\ZZ_2*\ZZ_p$.
Thus $\tilde r$ satisfies the condition (iii) of
\cite[Theorem 1.2]{Gonzalez-Raminez1},
which is equivalent to the condition (v) of the theorem.
The latter conidition in turn is equivalent to the condition 
that $\tilde r$ or $\tilde r+1$
belongs to the $\hat\Gamma_r$-orbit of $r=1/p$ or $\infty$
by \cite[Proposition 5.1]{Ohtsuki-Riley-Sakuma}.

By using the above results (A)$\sim$(D)
and Theorem \ref{main-theorem2}(1),
we obtain the following complete characterisation 
of epimorphisms between $2$-bridge knot groups. 

\begin{theorem}
\label{thm:cor-epimorohism}
There is an epimorphism from a $2$-bridge knot group $G(\tilde r)$ to 
a $2$-bridge knot group $G(r)$ with $r=q/p$,
if and only if one of the following conditions holds.
\begin{enumerate}
\item
$\tilde r$ or $\tilde r+1$
belongs to the $\hat\Gamma_r$-orbit of $r$ or $\infty$.
\item
$\tilde r$ or $\tilde r+1$
belongs to the $\hat\Gamma_{r'}$-orbit of $r'$ or $\infty$,
where $r'= q'/ p$
with $q q'\equiv 1 \pmod p$.
\end{enumerate}
\end{theorem}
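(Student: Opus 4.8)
The plan is to combine the one‑directional implications (A)–(D) with the uniqueness statement of Theorem \ref{main-theorem2}(1) to promote each of them into an \emph{if and only if}. The key observation is that, unlike the link case, the ``upper meridian pair'' of a $2$‑bridge \emph{knot} group $G(r)$ is essentially canonical: by Theorem \ref{main-theorem2}(1), $G(r)$ has exactly two parabolic generating pairs up to equivalence, namely the upper and lower meridian pairs, and these are interchanged by the meridian‑preserving isomorphism $G(q/p)\cong G(q'/p)$ with $qq'\equiv 1\pmod p$ (cf.\ \cite[Proposition 2.1]{AOPSY}). Thus any parabolic generating pair of $G(r)$ is, up to this ambiguity, the upper meridian pair.

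First I would establish the ``if'' direction. If condition (1) holds, then (A) directly provides an (upper‑meridian‑pair‑preserving) epimorphism $G(\tilde r)\to G(r)$. If condition (2) holds, then (A) provides an epimorphism $G(\tilde r)\to G(r')$, and composing with the isomorphism $G(r')\cong G(r)$ (valid since $qq'\equiv 1\pmod p$ means $K(r')$ and $K(r)$ are the same knot, so $r'$ and $r$ lie in the same $\mathrm{PGL}(2,\ZZ)$‑orbit of the relevant Farey data) yields an epimorphism $G(\tilde r)\to G(r)$. So in either case an epimorphism exists.

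For the ``only if'' direction, suppose $\varphi\colon G(\tilde r)\to G(r)$ is an epimorphism onto a $2$‑bridge knot group. I would split according to whether $K(r)$ is hyperbolic or a torus knot. If $K(r)$ is the trivial knot there is nothing to prove (the condition is automatically satisfied, taking $r\in\ZZ$). If $K(r)$ is a nontrivial torus knot, i.e.\ $r=1/p$ with $p>1$ odd, then the conclusion is exactly the content of (D). If $K(r)$ is hyperbolic, then by (C) the image $\varphi(\{\text{upper meridian pair of }G(\tilde r)\})$ is a parabolic generating pair of $G(r)$; by Theorem \ref{main-theorem2}(1) it is therefore equivalent either to the upper meridian pair of $G(r)$ or to the lower meridian pair. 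In the first case $\varphi$ is (after conjugation and replacing the $m_i$ by $m_i^{\pm1}$) upper‑meridian‑pair‑preserving, so (B) gives condition (1). In the second case, the lower meridian pair of $G(r)=G(q/p)$ corresponds, under the isomorphism $G(q/p)\cong G(q'/p)$ with $qq'\equiv1\pmod p$, to the upper meridian pair of $G(r')$; hence the composite $G(\tilde r)\to G(r)\cong G(r')$ is upper‑meridian‑pair‑preserving, and (B) applied to $r'$ gives condition (2).

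The main obstacle I anticipate is the bookkeeping around the word ``upper‑meridian‑pair‑preserving''. The results (A), (B), and (C) are phrased in terms of \emph{ordered} meridian pairs and a preferred one (the upper pair), whereas Theorem \ref{main-theorem2}(1) classifies \emph{unordered} parabolic generating pairs only up to the equivalence relation allowing inversions $m_i\mapsto m_i^{\epsilon_i}$ and simultaneous conjugation. One must check that the notion of epimorphism in (B) is insensitive to this equivalence — i.e.\ that if $\varphi$ sends the upper pair of $G(\tilde r)$ to something \emph{equivalent} to the upper (resp.\ lower) pair of $G(r)$, one can post‑compose with an automorphism of $G(r)$ realising that equivalence and with the isomorphism $G(q/p)\cong G(q'/p)$ to land exactly in the hypothesis of (B). This requires knowing that the relevant equivalences of meridian pairs are induced by self‑homeomorphisms of the link exterior (which they are, by the discussion of $\Sym^+(S^3,K(r))$ in Section~\ref{sec:two-bridge-link-group} and the fact that the two meridian pairs of a knot group are realised by the two tunnels in $\Fix(h_\pm)$), and is the one place where the argument is more than a formal concatenation of (A)–(D).
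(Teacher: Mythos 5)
Your proposal is correct and follows essentially the same route as the paper: the ``if'' direction from (A) plus the isomorphism $G(q/p)\cong G(q'/p)$ interchanging upper and lower meridian pairs, and the ``only if'' direction by splitting into the trivial, torus ((D)), and hyperbolic cases, where in the last case (C), Theorem \ref{main-theorem2}(1), and (B) are combined exactly as in the paper's argument. Your closing remark about checking that (B) is insensitive to the equivalence relation on meridian pairs is a sensible point of care, but it is already built into the definitions (meridian pairs and the statements (A)--(B) are formulated up to equivalence), so no extra work is needed.
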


\begin{proof}
Recall the well-known fact that 
if $r$ and $r'$ are as in (2) in the theorem then
there is an isomorphism $G(r)\to G(r')$
which brings the upper meridian pair of $G(r)$
to the lower meridian pair of $G(r')$
(cf. \cite[Proposition 2.1(1.b)]{AOPSY}).
The if part of the theorem is a consequence of the result (A) and this fact.

So we prove the only if part.
If $K(r)$ is a trivial knot, then every $\tilde r$ satisfies the condition
(1) or (2), and so the result holds trivially.
If $K(r)$ is a nontrivial torus knot, then it is nothing other than the result (D).
Thus we may assume $K(r)$ is a hyperbolic knot.
Let $\varphi:G(\tilde r)\to G(r)$ be an epimorphism and
$\{\tilde\alpha,\tilde\beta\}$ the upper meridian pair of $G(\tilde r)$.
Set $\{\alpha,\beta\}:=\{\varphi(\tilde\alpha), \varphi(\tilde\beta)\}$.
Then, by the result (C),
$\{\alpha,\beta\}$ is a parabolic generating pair of $G(r)$.
Thus, by Theorem~\ref{main-theorem2}(1),
it is equivalent to the upper or lower meridian pair of $G(r)$.
If $\{\alpha,\beta\}$ is equivalent to the upper-meridian pair,
then the result (B)
implies that the condition (1) holds.
If $\{\alpha,\beta\}$ is equivalent to the lower meridian pair of $G(r)$,
its image in $G(r')$ by the isomorphism $G(r)\to G(r')$, 
described at the beginning of the proof,
is the upper meridian pair of $G(r')$.
Thus the composition $G(\tilde r)\to G(r)\cong G(r')$
is an upper-meridian-pair-preserving epimorphism.
Hence the result (B) implies that 
$\tilde r$ satisfies the condition (2).
This completes the proof of Theorem~\ref{thm:cor-epimorohism}.
\end{proof}

\begin{remark}
\label{rem:determination-epi}
{\rm
(1) We can describe all epimorphisms 
between given two hyperbolic $2$-bridge knot groups as follows.
For each $r\in\QQQ$, fix a one-relator presentation 
$G(r)=\langle a,b \ | \ u_r\rangle$ as in \cite[Section 3]{Lee-Sakuma_2012},
where the ordered pair $(a,b)$ represents the upper-meridian pair.
If $\tilde r$ belongs to the $\hat\Gamma_r$-orbit of $r$ or $\infty$,
then the identity map on the free group $F(a,b)$ with free basis $\{a,b\}$
descends to an (upper-meridian-pair-preserving) epimorphsim $G(\tilde r)\to G(r)$:
we call it
the {\it ORS epimorphism} from $G(\tilde r)$ to $G(r)$
(cf. \cite[Proof of Theorem 1.1 in p.428]{Ohtsuki-Riley-Sakuma}).
By the proof of Theorem \ref{thm:cor-epimorohism}
and \cite[Proof of Main Theorem 2.4 in pp.364--365]{Lee-Sakuma_2012},
we can see that 
any epimorphism $G(\tilde r) \to G(r)$ 
between given two hyperbolic $2$-bridge knot groups
is equal to a composition 
\[
G(\tilde r)\cong G(\tilde r^*)\to G(r^*)\cong G(r)
\]
of two isomorphisms between $2$-bridge knot groups
and an ORS epimorphism $G(\tilde r^*)\to G(r^*)$.

(2) The above conclusion also holds 
for any epimorphism between hyperbolic $2$-bridge link groups
which are induced by a non-zero degree map between the exteriors of the links,
because such an epimorphism maps the upper-meridian pair to a parabolic generating pair and so we can apply Theorem~\ref{main-theorem2}(1) as in the proof of
Theorem \ref{thm:cor-epimorohism}.

(3) The results of 
Gonzal{\'e}z-Ac{\~u}na and Ram{\'i}nez \cite{Gonzalez-Raminez1, 
Gonzalez-Raminez2b, Gonzalez-Raminez2} imply that
the conclusion in (1) also holds when the target is a 
non-hyperbolic $2$-bridge knot $K(r)$ with $r=1/p$ ($p>1$ odd).
To see this, note that
$G(1/p)$ is the pullback (or the fiber product)
of the diagram $\ZZ\to \ZZ_{2p}\leftarrow \ZZ_2*\ZZ_p$
(see \cite[Example 2]{Hartley-Murasugi}).
In fact, this diagram together with
the abelianization $\xi:G(1/p)\to \ZZ$
and the natural epimorphism $\rho:G(1/p)\to \ZZ_2*\ZZ_p$
form a pullback diagram.
Now let $\varphi:G(\tilde r)\to G(1/p)$ be an epimorphism.
For simplicity, we assume that $\tilde r$ belongs to the $\hat\Gamma_r$-orbit 
of $r$ or $\infty$, and let $\varphi_0:G(\tilde r)\to G(1/p)$
be the ORS epimorphism.
Set $\tilde\xi=\xi\circ\varphi$, $\tilde\xi_0=\xi\circ\varphi_0$, 
$\tilde\rho=\rho\circ\varphi$, $\tilde\rho_0=\rho\circ\varphi_0$.
Then $\varphi$ and $\varphi_0$ are uniquely determined by the pairs 
$(\tilde\xi, \tilde\rho)$ and $(\tilde\xi_0, \tilde\rho_0)$, respectively. 
By the argument in \cite[the last part of the proof of Corollary 1.3]{BBRW}
based on \cite[Theorem 18]{Gonzalez-Raminez2b},
we see either (i) $\tilde\xi=\tilde\xi_0$ and $\tilde\rho=\tilde\rho_0$
modulo post composition of an inner-automorphism of $\ZZ_2*\ZZ_p$ 
or (ii) $\tilde\xi=h_1\circ\tilde\xi_0$ and $\tilde\rho=h_2\circ\tilde\rho_0$
modulo post composition of an inner-automorphism of $\ZZ_2*\ZZ_p$,
where $h_1=-1_{\ZZ}$ is the automorphism of $\ZZ$ defined by $h_1(z)=-z$
and $h_2$ is the automorphism of $\ZZ_2*\ZZ_p$ 
defined by $h_2=1_{\ZZ_2}*(-1_{\ZZ_{p}})=(-1_{\ZZ_2})*(-1_{\ZZ_{p}})$.
In the first case, $\varphi=\varphi_0$ modulo post composition of an 
inner-automorphism of $G(1/p)$.
In the second case, $\varphi=h\circ \varphi_0$
modulo post composition of an 
inner-automorphism of $G(1/p)$,
where $h$ is the automorphism of $G(1/p)$
induced by the strong inversion of $K(1/p)$.
}
\end{remark}

\medskip
We do not know if Theorem \ref{thm:cor-epimorohism} holds 
for $2$-bridge links,
because we do not know if the result (C) holds for 
$2$-component $2$-bridge links.
In fact, there are epimorphisms between $2$-bridge link groups
which are induced by degree $0$ maps between the link complements
(see \cite[Remark 6.3 and Proof of Corollary 8.1]{Ohtsuki-Riley-Sakuma}),
in contrast to \cite[Corollary~1.3]{BBRW}.
However, 
we can obtain a condition for the existence of
a degree one map between hyperbolic $2$-bridge link complements.
(We thank the referee for suggesting this to us.)
From now on, we work with link exteriors instead of link complements,
and by a {\it degree one map} between the exteriors of two links $L_1$ and $L_2$
in $S^3$,
we mean a map $f:(E(L_1),\partial E(L_1))\to (E(L_2),\partial E(L_2))$
such that $f_*:H_3(E(L_1),\partial E(L_1)) \to H_3(E(L_2),\partial E(L_2))$
carries the fundamental class  $[E(L_1),\partial E(L_1)]$ to
$\pm[E(L_2),\partial E(L_2)]$.
To describe the result, recall that any ORS epimorphism
$G(\tilde r)\to G(r)$ is induced by
(the restriction to the link exteriors of)
a \lq\lq branched fold map'' between 
$2$-bridge links (see \cite[Theorem 6.1]{Ohtsuki-Riley-Sakuma}),
whose degree, $\deg(\tilde r,r)$,
is calculated from continued fraction expansions of $\tilde r$ and $r$
(see \cite[Proposition 6.2(2) and Remark 6.3(1)]{Ohtsuki-Riley-Sakuma}).

\begin{theorem}
\label{thm:degree-one}
There is a degree one map 
$f:E(K(\tilde r))\to E(K(r))$ with $r=q/p$
between hyperbolic $2$-bridge link exteriors,
if and only if one of the following conditions holds.
\begin{enumerate}
\item
$\tilde r$ or $\tilde r+1$
belongs to the $\hat\Gamma_r$-orbit of $r$ or $\infty$,
and $\deg(\tilde r,r)$ or $\deg(\tilde r+1,r)$, accordingly, is equal to $\pm 1$.
\item
$\tilde r$ or $\tilde r+1$
belongs to the $\hat\Gamma_{r'}$-orbit of $r'$ or $\infty$,
where $r'= q'/ p$
with $q q'\equiv 1 \pmod p$,
and $\deg(\tilde r,r')$ or $\deg(\tilde r+1,r')$, accordingly, is equal to $\pm 1$.
\end{enumerate}
\end{theorem}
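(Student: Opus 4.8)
The plan is to upgrade the proof of Theorem~\ref{thm:cor-epimorohism}, keeping track of the mapping degree, and to feed it two elementary inputs: a degree one map between compact oriented manifold pairs is surjective on fundamental groups, and homotopic maps between the (aspherical) link exteriors have the same degree.

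For the \emph{if} part, suppose condition (1) holds, say $\tilde r$ lies in the $\hat\Gamma_r$-orbit of $r$ or $\infty$ with $\deg(\tilde r,r)=\pm1$. By \cite[Theorem~6.1]{Ohtsuki-Riley-Sakuma} the ORS epimorphism $G(\tilde r)\to G(r)$ is induced by a branched fold map between the $2$-bridge links whose restriction $E(K(\tilde r))\to E(K(r))$ has degree $\deg(\tilde r,r)$ by \cite[Proposition~6.2(2), Remark~6.3(1)]{Ohtsuki-Riley-Sakuma}; when this equals $\pm1$ this is the desired degree one map. The case of $\tilde r+1$ follows by precomposing with the homeomorphism $E(K(\tilde r))\cong E(K(\tilde r+1))$, and condition (2) reduces to condition (1) by postcomposing with the homeomorphism $E(K(r'))\cong E(K(r))$ arising from the equality $K(q'/p)=K(q/p)$ of links.

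For the \emph{only if} part, let $f\colon E(K(\tilde r))\to E(K(r))$ be a degree one map between hyperbolic $2$-bridge link exteriors. Then $f_*\colon G(\tilde r)\to G(r)$ is an epimorphism, and since $f$ carries $\partial E(K(\tilde r))$ into $\partial E(K(r))$, the $f_*$-images of the two meridians forming the upper meridian pair $\{\tilde\alpha,\tilde\beta\}$ of $G(\tilde r)$ are peripheral, and they are nontrivial since they remain nontrivial in $H_1$; hence each is parabolic, so $\{f_*(\tilde\alpha),f_*(\tilde\beta)\}$ is a parabolic generating pair of $G(r)$. By Theorem~\ref{main-theorem2}(1) it is equivalent to the upper or the lower meridian pair of $G(r)$, and arguing as in the proof of Theorem~\ref{thm:cor-epimorohism} this yields the orbit condition in (1) or (2). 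Moreover, since $f$ has nonzero degree, Remark~\ref{rem:determination-epi}(1),(2) shows that $f_*$ is a composition $G(\tilde r)\cong G(\tilde r^*)\to G(r^*)\cong G(r)$ of isomorphisms of $2$-bridge link groups and an ORS epimorphism; the two isomorphisms are induced by homeomorphisms of the corresponding link exteriors, and the ORS epimorphism is induced by the ORS branched fold map $g$. Since $E(K(r))$ is aspherical and $f_*$ is conjugate to the $\pi_1$-homomorphism of the composition of $g$ with those homeomorphisms, $f$ is homotopic to that composition, whence $\pm1=\deg f=\pm\deg g$. Transporting $\deg g$ back through the reparametrisations $r\mapsto r+1$ and $q/p\mapsto q'/p$ (which are realised by homeomorphisms and so change the degree only by a sign) shows that the relevant one of $\deg(\tilde r,r)$, $\deg(\tilde r+1,r)$, $\deg(\tilde r,r')$, $\deg(\tilde r+1,r')$ equals $\pm1$, which is exactly condition (1) or (2).

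I expect the main obstacle to be this last bookkeeping step: one must verify that the isomorphisms of $2$-bridge link groups intervening in Remark~\ref{rem:determination-epi} are genuinely realised by homeomorphisms of the link \emph{exteriors} respecting the boundary, so that the asphericity argument (maps into a $K(\pi,1)$ are classified by $\pi_1$ up to conjugacy, and homotopy equivalences of these Haken manifolds are homotopic to homeomorphisms by Waldhausen's theorem) is legitimate in the bounded setting, and that the combinatorial degree function $\deg(\cdot,\cdot)$ of \cite{Ohtsuki-Riley-Sakuma} is compatible, up to sign, with the reparametrisations $r\mapsto r+1$ and $q/p\mapsto q'/p$. All the remaining steps are either standard or, for the group-theoretic part, already contained in the proof of Theorem~\ref{thm:cor-epimorohism} and Remark~\ref{rem:determination-epi}.
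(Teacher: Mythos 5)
Your proposal is correct and follows essentially the same route as the paper: the if part via the ORS branched fold maps and the reparametrising homeomorphisms $E(K(\tilde r))\cong E(K(\tilde r+1))$, $E(K(q'/p))\cong E(K(q/p))$, and the only if part by observing that $f_*$ sends the upper meridian pair to a parabolic generating pair, invoking Theorem~\ref{main-theorem2}(1) and Remark~\ref{rem:determination-epi}, and then using asphericity to homotope $f$ to the branched fold map (up to homeomorphisms) so that its degree is $\pm\deg(\cdot,\cdot)$. Your direct derivation of peripherality from the fact that $f$ is a map of pairs is a slightly cleaner substitute for the paper's appeal to result (C), but the argument is otherwise the same.
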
 

\begin{proof}
The if part follows from the description above
and \cite[Theorem 6.1 and Proposition 6.2]{Ohtsuki-Riley-Sakuma}.
To prove the only if part,
let $f:E(K(\tilde r))\to E(K(r))$ be a degree one map
between hyperbolic $2$-bridge link exteriors.
Then $f$ induces an epimorphism
$f_*:G(\tilde r)\to G(r)$,
and it maps the upper meridian pair of $G(\tilde r)$
to a parabolic generating pair of $G(r)$.
Thus, as in the proof of Theorem \ref{thm:cor-epimorohism},
we see by using Theorem~\ref{main-theorem2} that
one of the conditions (1) and (2) in Theorem~\ref{main-theorem2} holds.
Moreover, as noted in Remark \ref{rem:determination-epi}(2),
the epimorphism $f_*$ is equal to an ORS epimorphism
modulo pre/post-compositions of isomorphisms
of $2$-bridge link groups.
Since $E(K(r))$ is aspherical
and since the peripheral subgroups are malnormal, 
the map $f$ is properly homotopic to the 
branch-fold map 
given by \cite[Theorem 6.1]{Ohtsuki-Riley-Sakuma},
modulo pre/post-compositions of homeomorphisms between $2$-bridge link exteriors.
Hence the degree of $f$ is equal to
$\pm\deg(\tilde r,r)$, $\pm\deg(\tilde r+1,r)$, $\pm\deg(\tilde r,r')$ 
or $\pm\deg(\tilde r+1,r')$
accordingly.
Since $\deg(f)=\pm1$, 
one of the conditions of Theorem \ref{thm:degree-one} holds. 
\end{proof}

\nocite{*}
\bibliographystyle{rendiconti}
\bibliography{ParabolicGP_rendiconti}

\end{document}